\documentclass[reqno,oneside, notitlepage]{amsart}
\usepackage[
  a4paper,
  textwidth=16.3cm,
]{geometry}

\usepackage{mathrsfs}  
\usepackage{mathtools}
\usepackage{esint}
\usepackage{bm}
\usepackage{stackengine}
\usepackage{enumitem}
\usepackage[final]{hyperref}

\usepackage[dvipsnames]{xcolor}
\usepackage{xargs}
\usepackage[textsize=tiny,textwidth=4cm]{todonotes}

\newtheorem{theorem}{Theorem}[section]
\newtheorem{lemma}[theorem]{Lemma}
\newtheorem{proposition}[theorem]{Proposition}

\theoremstyle{definition}
\newtheorem{definition}[theorem]{Definition}

\theoremstyle{remark}
\newtheorem{remark}[theorem]{Remark}
 
\newcommand{\N}{\mathbb{N}}
\newcommand{\R}{\mathbb{R}}

\newcommand{\Id}{\mathbf{Id}}
\newcommand{\id}{\mathbf{id}}
\newcommand{\eps}{\varepsilon}
\newcommand{\vphi}{\varphi}
\newcommand{\weakly}{\rightharpoonup}
\newcommand{\weaklystar}{\stackrel{*}{\rightharpoonup}}
\newcommand{\defas}{\coloneqq}
\newcommand{\sym}{\mathrm{sym}}
\newcommand{\cplen}{\mathcal{W}^{\mathrm{cpl}}}
\newcommand{\mechen}{\mathcal{M}}
\newcommand{\inten}{W^{\mathrm{in}}}
\newcommand{\tempen}{\mathcal{T}}
\newcommand{\diss}{\mathcal{D}}
\newcommand{\elpot}{W^{\mathrm{el}}}
\newcommand{\cplpot}{W^{\mathrm{cpl}}}
\newcommand{\hypot}{H}
\newcommand{\felpot}{W}
\newcommand{\disspot}{R}
\newcommand{\Wid}[1]{\mathcal{Y}_{h(#1)}} 
\newcommand{\Wzero}{W^{2, p}_{\Gamma_D}(\Omega; \R^d)}
\newcommand{\pl}{\partial} 
\newcommand{\yst}[1]{y_{\tau}^{(#1)}}
\newcommand{\ysts}[1]{y_{\tau}^{(#1)}}
\newcommand{\tst}[1]{\theta_{\tau}^{(#1)}}
\newcommand{\tsts}[1]{\theta_{\tau}^{(#1)}}
\newcommand{\lst}[1]{\ell_{\tau}^{(#1)}}
\newcommand{\wst}[1]{w_{\tau}^{(#1)}} 
\newcommand{\fst}[1]{f_\tau^{(#1)}}
\newcommand{\gst}[1]{g_\tau^{(#1)}}
\newcommand{\bt}{\theta_\flat}
\newcommand{\btst}[1]{\theta_{\flat, \tau}^{(#1)}}
\newcommand{\hc}{\mathbb{K}}
\newcommand{\hcm}{\mathcal{K}}
\newcommand{\drate}{\xi}
\newcommand{\haus}{\mathcal{H}}
\newcommand{\ddif}{\delta_\tau}
\newcommand{\aC}{C_0}
\newcommand{\ac}{c_0}
\newcommand{\ny}{\overline{y}_{\tau}}
\newcommand{\py}{\underline{y}_{\tau}}
\newcommand{\ay}{\hat{y}_{\tau}}
 
\newcommand{\nt}{\overline{\theta}_{\tau}}
\newcommand{\pt}{\underline{\theta}_{\tau}}
\newcommand{\at}{\hat{\theta}_{\tau}}
   
\newcommand{\nw}{\overline{w}_{\tau}}
\newcommand{\pw}{\underline{w}_{\tau}}
\newcommand{\aw}{\hat{w}_{\tau}}
\newcommand{\intQ}{\int_I\int_\Omega}

\newcommand{\cdddot}{\mathrel{\Shortstack{{.} {.} {.}}}}
\newcommand*{\di}{\mathop{}\!\mathrm{d}}

\newcommand{\neohooke}{W_{\rm NH}}

\DeclareMathOperator*{\argmin}{argmin}
\DeclareMathOperator{\trace}{tr}
\DeclareMathOperator{\diver}{div}

\DeclarePairedDelimiterX\setof[1]\{\}{#1}
\DeclarePairedDelimiterX\abs[1]\lvert\rvert{#1}
\DeclarePairedDelimiterX\norm[1]\lVert\rVert{#1}
\DeclarePairedDelimiterX\sprod[2]\langle\rangle{#1, #2}

\numberwithin{equation}{section}

\title[Frame-indifferent discretization in nonlinear thermoviscoelasticity]{Frame-indifferent discretization in nonlinear thermoviscoelasticity: Analysis and numerical simulations }

\subjclass[2020]{35A15, 35Q74, 74A15, 74A30, 74D10, 74F05, 74G15, 74G22}
\keywords{Thermoviscoelasticity, frame-indifferent time-discretization, numerical approximation.}

\author[R.~Badal]{Rufat Badal}
\address[Rufat Badal]{
  Department of Mathematics \\
  Friedrich-Alexander Universit\"at Erlangen-N\"urnberg \\
  Cauerstr.~11, D-91058 Erlangen, Germany
}
\email{rufat.badal@icloud.com}

\author[M.~Friedrich]{Manuel Friedrich} 
\address[Manuel Friedrich]{Department of Mathematics, Johannes Kepler Universit\"at Linz. Altenbergerstrasse 69, 4040 Linz,
    Austria}
\email{manuel.friedrich@jku.at}

\author[M.~Hor\'{a}k]{Martin Hor\'{a}k}
\address[Martin Hor\'{a}k]{Faculty of Civil Engineering \\
  Czech Technical University \\
  Th\'akurova 7, CZ-166 29 Praha 6, Czechia\\
  Czech Academy of Sciences \\
  Institute of Information Theory and Automation \\
  Pod vod\'arenskou v\v{e}\v{z}\'i 4, CZ-182 00 Praha 8, Czechia} \email{Martin.Horak@cvut.cz}

\author[M.~Kru\v{z}\'ik]{Martin Kru\v{z}\'ik}
\address[Martin Kru\v{z}\'ik]{
  Czech Academy of Sciences \\
  Institute of Information Theory and Automation \\
  Pod vod\'arenskou v\v{e}\v{z}\'i 4, CZ-182 00 Praha 8, Czechia  \\
  \& Faculty of Civil Engineering \\
  Czech Technical University \\
  Th\'akurova 7, CZ-166 29 Praha 6, Czechia}
\email{kruzik@utia.cas.cz}

\author[L.~Machill]{Lennart Machill}
\address[Lennart Machill]{Institute for Applied Mathematics, University of Bonn, Endenicher Allee 60, 53115 Bonn, Germany}
\email{lmachill@uni-bonn.de}

\begin{document}

\begin{abstract}
	We consider a quasi-static nonlinear model in thermoviscoelasticity at a finite-strain setting in the Kelvin-Voigt rheology where both the elastic and viscous stress tensors comply with the principle of frame indifference under rotations. We refine the discretization schemes in \cite{RBMFMK,MielkeRoubicek20Thermoviscoelasticity} by imposing frame indifference already at a time-discrete level. This is justified both  analytically and numerically.
\end{abstract}

\maketitle

\section{Introduction}
Evolutionary problems in solid mechanics and their variational analysis have attracted increasing attention over the last decades. In these models, the motion of the material is driven by time-dependent external loads or by initial configurations that are not in equilibrium. A natural way to describe the evolution is through the balance of momentum, which, in Lagrangian (referential) coordinates, takes the form
\begin{align}\label{eq:verygeneral}
	\rho \, \partial^2_t  y  - \diver ( \mathbf{T} ) =  f        \ \ \  \text{ in $ [0,T] \times    \Omega$}.
\end{align}
Here, $\Omega \subset \R^d$ represents the reference configuration of the material, while $y\colon [0,T] \times \Omega \to \R^d$ indicates the deformation mapping. The stress tensor $\mathbf{T}$   depends on  the material properties, and $f\colon [0,T] \times \Omega \to \R^d$ indicates a density of body forces. Finally, $\rho$ denotes a mass density.

In nonlinear elastodynamics, the tensor $\mathbf{T}$ is  calculated from a frame-indifferent density $W\colon \R^{d \times d} \to [0,\infty)$ via $\mathbf{T} = \mathbf{T}(\nabla y) = \partial_F W(\nabla y)$.
In contrast to the purely elastic case,  viscoelastic materials additionally depend on the first time derivative of the strain $\nabla y$. One of the fundamental concepts to describe these materials is the Kelvin-Voigt rheology, where $\mathbf{T}$ is determined by the  sum of the elastic stress and viscous stress, leading to
$ \mathbf{T} = \partial_F W(\nabla y)+ \partial_{\dot F} R (\nabla y, \partial_t \nabla y)$.
Here, the function $R \colon  \R^{d \times d} \times \R^{d \times d} \times \R_+   \to \R $ is the density of the viscous stress tensor with $\dot F$ denoting the placeholder for the strain rate $\partial_t \nabla y$.
To comply with dynamical frame-indifference \cite{Antmann}, $R$ has to  take a specific form and particularly   depends   on both the strain and the strain rate. As a result of this principle, $\mathbf{T}$ is not monotone in the strain rate  rendering the problem highly nontrivial, see \cite[Appendix B]{demoulini}  and \cite[Section~4(b)]{sengul2}.
Similarly to the setting of elastodynamics, the existence of a solution  is only known
for initial data appropriately close to a smooth equilibrium, see \cite{potier-ferry-1,potier-ferry-2},  while existence results under different monotonicity assumptions are proved in \cite{demoulini,Lewick,Tvedt}.
Other viscoelastic rheologies including Maxwell elements are  discussed, e.g., in \cite{Poynting}.

To date, one still needs to resort to energy densities with higher-order spatial gradients in order to prove the existence of weak solutions beyond the one-dimensional framework. Such an approach, referred to as nonsimple-material models \cite{Toupin62Elastic,Toupin64Theory}, has proved useful in various settings of continuum mechanics, for instance, we refer to \cite{BallCurrieOlver81Null,Batra76Thermodynamics,Mielke20Roubicek202016Rateindependent,Podio02Contact}. While the existence of quasi-static weak solutions to \eqref{eq:verygeneral} for nonsimple materials in the Kelvin–Voigt rheology (corresponding to $\rho = 0$) has been established in \cite{RBMFMK,positivity24,Machillpvisco,MielkeRoubicek20Thermoviscoelasticity}, inertial effects have been addressed in \cite{Schwarzacher}.

Moreover, the system has been rigorously related to the linearized equations in \cite{FriedrichKruzik18Onthepassage} and to lower-dimensional counterparts in \cite{FK_dimred,MFLMDimension2D1D,MFLMDimension3D1D}. Further models allow for self-contact \cite{gravina,Kroemer}, nontrivial couplings with a diffusion equation \cite{liero} or a Cahn–Hilliard equation \cite{Leonie}, as well as models with accretive growth \cite{Chiesa2,Chiesa3}. We refer to \cite{gahn} for a homogenization result, to \cite{CesikStability} for stability results on in-time approximation schemes, and to \cite{Roubicek23Eulerian,Roubicek23Eulerian2,Roubicek23Eulerian3,Roubicek23Eulerian4} for recent works employing the alternative Eulerian formulation. General information on the model and many further references on the dynamics can be found in \cite{sengul}.

In thermoviscoelasticity, the mechanical equation is coupled with a nonlinear heat equation, which relates the energy dissipated by viscous stresses to heat production in the material. The study of such models flourished following the work of {\textsc{Dafermos}} \cite{dafermos}. For developments in linearized settings, we refer to
\cite{colli-sprekels,Gawinecki1,Gawinecki2,Lazzaroni18,pawlow-zajackowski,pawlow-zajackowski-1,Roubicek09,winkler,winkler2,Yoshikawa-Pawlow-Zajackowski},
while advances in nonlinear thermoviscoelasticity have been made only more recently.  This progress was triggered by the article of {\textsc{Mielke \& Roubíček}} \cite{MielkeRoubicek20Thermoviscoelasticity}, in which global-in-time weak solutions to the  quasi-static nonlinear system were derived by means of a staggered time-discretization scheme. In this scheme, the deformation and the temperature are updated alternately. It was later refined in \cite{RBMFMK}, where the intermediate regularization of the viscosity term was avoided. The existence theory was further extended to the dynamical setting in \cite{BadalSchwarz}, as well as to results focusing on thermodynamical consistency in \cite{positivity24}. We also refer to recent works on linearization \cite{RBMFMK,positivity24} and dimension reduction \cite{RBMFLM}, as well as to the Eulerian perspective \cite{Roubicek23EulerianTHERMO2,Roubicek23Eulerian2}.

Time-discretization schemes play an important role in this context, both for proving existence results and for numerical implementations. To the best of our knowledge, every time-discretization scheme in large-strain thermoviscoelasticity breaks frame indifference and only recovers frame indifference of the original model in the limit of vanishing time-step size. The goal of this article is to propose a physically more accurate time-discrete approximation scheme for the model in \cite{MielkeRoubicek20Thermoviscoelasticity} by imposing frame indifference already at the time-discrete level. As a second contribution of this article, we implement the scheme using a finite element approximation and discuss three prototypical model settings highlighting qualitative features of the model.

We emphasize that, in the  quasi-static and isothermal setting, a time-discrete frame-indifferent scheme has already been employed successfully in \cite{FriedrichKruzik18Onthepassage,MFLMDimension3D1D,Machillpvisco,MielkeOrtnerSenguel14Anapproach}, relying on the theory of gradient flows in metric spaces. In contrast, in thermoviscoelasticity there is no straightforward gradient-flow structure, which makes the problem more delicate. As in \cite{RBMFMK,MielkeRoubicek20Thermoviscoelasticity}, we employ a variational staggered scheme that alternates between deformation and temperature. The frame-indifference principle, however, requires us to adjust both the mechanical and thermal minimization problems in order to obtain the desired compactness properties, see Section~\ref{sec:timediscrete} below.

The key ingredient for proving compactness properties of the (discrete) strain rates in the large-strain setting is a generalization of the rigidity estimate by {\sc Friesecke, James, and Müller} \cite{FrieseckeJamesMueller:02} due to {\sc Ciarlet and Mardare} \cite{CiarletMardare}. In contrast to the isothermal case \cite{Machillpvisco}, it is more challenging to derive bounds on the strain rates, as these do not immediately follow from the energy inequality obtained from the mechanical minimization problem. Indeed, the alternating scheme generates a term without a sign, see \eqref{comparison_yn} below, which requires suitable estimates for matrix roots due to the frame-indifference principle of the energy densities and the polar decomposition theorem, see Section~\ref{sec:matrixcalc}. Moreover, the rigidity estimate has to be suitably adapted to our setting, since we complement the model with time-dependent boundary  conditions, see Section~\ref{sec:korn}.

It turns out that we obtain the same a priori estimates as in \cite{RBMFMK} for the time-discrete approximations. Although the time-discretization scheme generates new (time-discrete) Euler–Lagrange equations, the equations in \cite{RBMFMK} differ only by lower-order terms. Since the a priori bounds allow us to control these terms, we proceed analogously to \cite{RBMFMK} in the limiting passage to recover a solution of the original system.

In the final part of this contribution, we present an implementation of the staggered time-discretization scheme and discuss three numerical simulations. These simulations are partially  related to corresponding analytical results. First, we compare the dynamically frame-invariant and the non-frame-invariant approximation schemes and set up an experiment in which the simulation yields a more physical solution when time-discrete frame indifference is respected. It is clearly visible from the numerical experiments (see Figure~\ref{fig:example_frameNonIvariance}) that the non-frame-invariant scheme from \cite{RBMFMK} inevitably leads to energy dissipation even for rigid body motions, which is nonphysical. Moreover, this experiment is complemented by a quantitative result measuring the dissipated energy for a minimizer of the non-frame-indifferent scheme, see Theorem~\ref{thm:quantitativeerror}.
In the second experiment, we simulate the long-time behavior of the system. We observe an (exponential) convergence to an equilibrium, see Figure~\ref{fig:creep}, which is in accordance with the prediction in the isothermal case \cite[Theorem 2.3]{Machillpvisco}. In the final experiment, we show that applied loadings may induce cooling effects in the material. For instance, such effects occur in stress-induced phase transformations of shape-memory alloys \cite{Reviewelastocaloric,Elastocaloric1} and shape-memory polymers \cite{shapememorypolymerscaloric}. Mathematically, the heat equation features an adiabatic term which has no sign and may absorb heat. This term arises naturally from the thermodynamical derivation of the heat equation via the entropy balance and a free energy depending on both deformation and temperature, cf.\ \cite[(2.11)--(2.13)]{MielkeRoubicek20Thermoviscoelasticity} for details. This experiment visualizes the role of the adiabatic effect in thermomechanical systems and particularly relates it with rate-dependent viscosity, see Figure~\ref{fig:figure5}.

The plan of this paper is as follows. In Section~\ref{sec:themodel}, we present the model in detail.  Sections~\ref{sec:analyticalresult}--\ref{sec:convtimediscrete} are devoted to the analytical convergence result.
Finally, in Section~\ref{sec:Example}, we address the numerical implementation.\\

\textbf{Notation:} In what follows, we use standard notation for Lebesgue and Sobolev spaces. The lower index $_+$ means nonnegative elements, i.e., $L^2_+(\Omega)$ denotes the convex cone of nonnegative functions belonging to $L^2(\Omega)$, and  we  set $\R_+\defas [0,+\infty)$.   Let $a \wedge b \defas \min\setof{a, b}$ and $a \vee b \defas \max\setof{a, b}$ for $a, \, b \in \R$.
Denoting by $d \ge 2$ the   space   dimension, we let $\Id \in \R^{d \times d}$  be the identity matrix, and  $\id(x) \defas x$ stands for the identity map on $\R^d$.
We define  the subsets  $SO(d) \defas \setof{A \in \R^{d \times d} \colon A^T A = \Id, \,  \det A = 1  }$, $GL^+(d) \defas \setof{F \in \R^{d \times d} \colon \det(F) > 0}$,  and  $\R^{d \times d}_\sym \defas \setof{A \in \R^{d \times d} \colon  A= \sym(A)}$, where $\sym(A) = \frac{1}{2}(A+ A^T)$ is the symmetric part of a matrix $A\in \R^{d \times d}$.  
Furthermore, for $F \in GL^+(d)$ we denote by $F^{-T} \defas (F^{-1})^T=(F^T)^{-1}$ the inverse of the transpose of $F$, and given a tensor $G$ (of arbitrary dimension), $\abs{G}$ indicates its Frobenius norm.
The scalar product between vectors, matrices, and third-order tensors will be written as $\cdot$, $:$, and $\cdddot$, respectively.
For $T \in \R^{d \times d \times d \times d}$ and $A \in \R^{d \times d}$,  $TA \in \R^{ d \times d}$ is given by $(TA)_{ij} = T_{ijkl} A_{kl}$  for $1 \leq i, \, j \leq d$, where we employ Einstein's summation convention.  Any fourth-order tensor  $T \in \R^{d \times d \times d  \times d}$ induces a bilinear form $T \colon \R^{d \times d} \times \R^{d \times d} \to \R$ given by $T[A,B] \defas TA : B = T_{ijkl} A_{kl} B_{ij}$ for any $A, \, B \in \R^{d \times d}$.
As usual, generic constants   may vary from line to line.

\section{The model}\label{sec:themodel}
Consider a bounded domain $\Omega \subset \R^d$ with Lipschitz boundary $\Gamma \defas \partial \Omega$.
Let $\Gamma_D, \, \Gamma_N$ be disjoint Borel subsets of $\Gamma$ such that $\haus^{d-1}(\Gamma_D) > 0$ and $\Gamma = \Gamma_D \cup \Gamma_N$, representing Dirichlet and Neumann parts of the boundary, respectively. Given $p>d$ and a time process interval $I \defas [0,T] $ for $T>0$,
the Dirichlet boundary conditions are parametrized by a time-dependent function $h \in W^{1,\infty}( I ; W^{2,p}(  \R^d; \R^d))$ which is bijective for each $t \in I$ and satisfies      $h^{-1} \in W^{1,\infty}( I ; W^{2,p}( \R^d;  \R^d))$ as well as   $\inf_{I \times \Omega}  \min\lbrace  \det(\nabla h), \det(\nabla h^{-1}) \rbrace  \geq \mu > 0$. Here, $h^{-1}$ is the spatial inverse. We introduce the set of \emph{admissible deformations} by
\begin{equation}\label{Yid}
	\Wid{t} \defas \setof*{
		y \in W^{2, p}(\Omega; \R^d) \colon
		y = h(t) \text{ on } \Gamma_D, \,
		\det(\nabla y) > 0 \text{ in } \Omega
	}.
\end{equation}
Notice that in contrast to some previous results, see e.g.\ \cite{RBMFMK,positivity24,Schwarzacher,MielkeRoubicek20Thermoviscoelasticity}, we consider time-dependent  Dirichlet   boundary conditions, in particular as this is relevant later for the numerical simulations.  
We also refer, e.g., to \cite{MielkeFranc} for time-dependent Dirichlet conditions in finite-strain elasticity.

\subsection{The variational setting}
  Let $0 < \ac < \aC     < \infty    $ be some  fixed      constants.

\noindent \textbf{Mechanical energy and coupling energy:}
The \emph{mechanical energy} $\mechen \colon \Wid{t} \to \R_+$ is given by
\begin{equation}\label{mechanical_energy}
	\mechen(y) \defas \int_\Omega \elpot(\nabla y(x)) + H(\nabla^2 y(x) ) \di x.
\end{equation}
This elastic energy corresponds to the concept of  2nd-grade nonsimple materials, depending on both the gradient and the Hessian of the deformation $y$. Here,  $\elpot \colon GL^+(d) \to \R_+$ is a frame-indifferent elastic energy potential satisfying  
\begin{enumerate}[label=(W.\arabic*)]
	\item \label{W_regularity} $\elpot $ is additively decomposed as $\elpot (F) = W^{\rm el}_1(F) + W^{\rm el}_2(\det(F))$ for $F \in GL^+(d)$, with  $W^{\rm el}_1\in C^2(\R^{d\times d})$ and $W^{\rm el}_2 \in C^2((0,+\infty))$;  
	\item \label{W_frame_invariace} Frame indifference: $\elpot(QF) = \elpot(F)$ for all $F \in GL^+(d)$ and $Q \in SO(d)$;
	\item \label{W_lower_bound}   Growth: Given $q \ge \frac{pd}{p-d}$ and $r \geq \frac{2d q}{q-2}$,    for all $F  \in GL^+(d)$ and $a > 0$ it holds that
	      \begin{align*}
		      \qquad  W^{\rm el}_1(F) & \ge \ac |F|^{r}  - \aC, \quad \quad    W^{\rm el}_2(a) \ge c_0 a^{-q},
	      \end{align*}

	\item \label{W_lipschitz}   Local Lipschitz estimate:    For all $F, \tilde F \in GL^+(d)$ and $a,b > 0$ it holds that   
	      \begin{align*} |W^{\rm el}_1(F) - W^{\rm el}_1(\tilde F)| & \leq C_0 (1 + \vert F \vert^{r-1} + \vert \tilde F \vert^{r-1}  ) \vert F - \tilde F \vert, \\   \vert W^{\rm el}_2(a) - W^{\rm el}_2(b) \vert & \leq C_0 (1+ \vert a \vert^{-q-1} + \vert b \vert^{-q-1}) \vert a - b\vert.
	      \end{align*}
\end{enumerate}
The potential $\hypot \colon \R^{d \times d \times d} \to \R_+$ satisfies
\begin{enumerate}[label=(H.\arabic*)]
	\item \label{H_regularity} $\hypot$ is convex and $C^1$;
	\item \label{H_frame_indifference} Frame indifference: $\hypot(QG) = \hypot(G)$ for all $G \in \R^{d \times d \times d}$ and $Q \in SO(d)$;
	\item \label{H_bounds}  Growth: $\ac \abs{G}^p \leq H(G) \leq \aC (1+ \abs{G}^p)$ and $\abs{\pl_G H(G)} \leq \aC \abs{G}^{p-1}$ for all $G \in \R^{d \times d \times d}$.
\end{enumerate}
In contrast to \cite{RBMFMK}, we consider a superquadratic  growth of $W^{\rm el}$, see Remark~\ref{rem:differenceapriori} for a detailed  discussion on the difference of our model to \cite{RBMFMK}.   
Besides the mechanical energy, we introduce a \emph{coupling energy} $\cplen \colon \Wid{t} \times     L^1_+(\Omega)      \to \R$ given by
\begin{equation}\label{coppler}
	\cplen(y, \theta) \defas \int_\Omega \cplpot(\nabla y, \theta) \di x.
\end{equation}
Here, $\theta \in L^1_+(\Omega)$ is the \emph{absolute temperature} and   $\cplpot \colon GL^+(d) \times \R_+ \to \R$  satisfies
\begin{enumerate}[label=(C.\arabic*)]
	\item \label{C_regularity} $\cplpot$ is continuous, and $C^2$ in $GL^+(d) \times (0, \infty)$;
	\item \label{C_frame_indifference} $\cplpot(QF, \theta) = \cplpot(F, \theta)$ for all $F \in GL^+(d)$, $\theta \geq 0$, and $Q \in SO(d)$;
	\item \label{C_zero_temperature} $\cplpot(F, 0) = 0$ for all $F \in GL^+(d)$;
	\item \label{C_lipschitz} $|\cplpot(F,\theta) - \cplpot(\tilde{F}, \theta)| \le \aC(1 + |F| + |\tilde{F}|)|F - \tilde{F}|$ for all $F, \, \tilde F \in GL^+(d)$        and $\theta \geq 0$;
	\item \label{C_bounds} For all $F \in     GL^+(d)   $ and $\theta > 0$ it holds that
	      \begin{align*}
		      \abs{\partial_F^2 W^{\rm cpl}(F,\theta)} & \le \aC,                                               &
		      \abs{\pl_{F \theta} \cplpot(F, \theta)}  & \leq \frac{\aC(1+|F|)}{\max\lbrace \theta,1\rbrace},   &
		      \ac                                      & \leq -\theta \pl_\theta^2 \cplpot(F, \theta) \leq \aC.
	      \end{align*}
\end{enumerate}
Classical examples for densities in the isothermal case are presented in
\cite[Section~2.4]{KruzikRoubicek19mathmodels}, while a specific coupling density
arising in the context
of shape-memory alloys and polymers
which complies
with \ref{C_regularity}--\ref{C_bounds} is given in \cite[Appendix~B]{positivity24}.
More specifically, the latter serves as a starting point for the third simulation below in Section~\ref{sec:Example}.
The advantages of the potential $H$ in the context of elastic stresses are extensively discussed in \cite[Section~2.5]{KruzikRoubicek19mathmodels}.  

For $F \in GL^+(d)$ and $\theta \geq 0$, we define the \emph{total free energy potential}
\begin{align}\label{eq: free energy}
	\felpot(F, \theta) \defas \elpot(F) + \cplpot(F, \theta),
\end{align}
and   the density of the internal energy $\inten \colon GL^+(  d ) \times (0, \infty) \to \R_+$ by
\begin{equation*}
	\inten(F, \theta) \defas \cplpot(F, \theta) - \theta \partial_\theta \cplpot(F, \theta).
\end{equation*}
As discussed in \cite{RBMFMK}, \ref{C_zero_temperature} and the third bound in \ref{C_bounds} imply that $\inten$ can be continuously extended to zero temperatures by setting $\inten(F, 0) = 0$ for all $F \in GL^+(d)$.
Also by the third bound in \ref{C_bounds}, the internal energy is controlled by the temperature in the following sense:
\begin{align}\label{sec_deriv}
	\partial_{\theta} \inten (F, \theta)
	= -\theta \pl_\theta^2 \cplpot(F, \theta) \in [\ac, \aC]
	\qquad \text{for all $F \in GL^+(d)$ and $\theta  >   0$}
\end{align}
which along with \ref{C_zero_temperature} yields
\begin{equation}\label{inten_lipschitz_bounds}
	\ac \theta \leq \inten(F, \theta) \leq \aC \theta.
\end{equation}

\noindent\textbf{Dissipation mechanism:}
We consider the potential of dissipative forces of the form
$R(F,\dot F, \theta) \defas \frac{1}{2} V(C,\theta) [\dot C, \dot C] = \frac{1}{2} \dot C :V(C, \theta) \dot C$, where we set $C \defas F^TF$ and  $\dot C \defas \dot F^T F + F^T \dot F$, and  
$V \in C(\R^{d \times d}_{\rm sym} \times \R_+ ; \R^{d\times d \times d \times d})$ denotes a fourth order tensor  satisfying
\begin{enumerate}[label=(D.\arabic*)]
	\item \label{D_quadratic}  with $V_{ijkl} = V_{jikl}= V_{klij}$ for $1 \le i,\,j,\,k,\,l \le d$;
	\item \label{D_bounds} $c_0 \vert \dot C \vert^2 \leq \frac{1}{2} \dot C :V(C, \theta) \dot C \leq C_0 \vert \dot C \vert^2$ for $0 <c_0< C_0$ for all $C, \, \dot C \in \R^{d \times d}_\sym$, and $\theta \geq 0$.
\end{enumerate}
The resulting viscous stress tensor (see \cite[(2.8)]{RBMFMK}) is then given as
\begin{align}\label{viscousstresstensor}
	\partial_{\dot{F}}R(F,\dot F,\theta)  = 2 F  \, V(C, \theta) \dot C   .
\end{align}
Given $y\in  W^{2, p}(\Omega; \R^d)$, $\tilde y \in  W^{2, p}(\Omega; \R^d) $, and $\theta \in L_+^1(\Omega)$ we consider the \emph{dissipation functional}  $\diss \colon  W^{2, p}(\Omega; \R^d) \times W^{2, p}(\Omega; \R^d) \times L_+^1(\Omega) \to \R_+$ defined as
\begin{equation}\label{dissipation}
	\mathcal D(y, \tilde y,\theta) \defas \frac{1}{2} \int_\Omega D^2(\nabla   y, \nabla \tilde  y,\theta) \di x,
\end{equation}
where we consider two different options for the density $D^2$:

(a) The frame-indifferent option for $D^2$ is given by
\begin{enumerate}[label=(V.1)]
	\item \label{Case2-newDissipation} $D^2(F_1, F_2,\theta)  \defas  V(F_1^T F_1, \theta) [F_2^T F_2 - F_1^T F_1 , F_2^T F_2 - F_1^T F_1]$
\end{enumerate}
for $F_1, \, F_2 \in GL^+(  d )$.  More specifically, this function satisfies \emph{separate frame indifference}, i.e., for all $Q_1, Q_2 \in SO(d)$ and all $F_1, F_2 \in GL^+(d)$ and for all $\theta \geq 0$ we have that
\begin{align}\label{def:seperateframeindifference}
	D^2(F_1, F_2,\theta) = D^2 (Q_1F_1, Q_2 F_2,\theta).
\end{align}
This particularly means that the dissipation potential is invariant under rigid body motions. Indeed, given $y(t) = R(t) y_0$ for some $R \in C^\infty([0,T];SO(d))$ we have $\mathcal D(y(0), y(t),\theta) = 0 $.   The choice \ref{Case2-newDissipation}  was proposed in a similar manner in \cite[Example~2.4]{MielkeOrtnerSenguel14Anapproach}.  

(b) Our second option is different and has been adopted in \cite{RBMFMK,MielkeRoubicek20Thermoviscoelasticity}.  Here, $D^2$ takes the form
\begin{enumerate}[label=(V.2)]
	\item \label{Case1-oldDissipation} $D^2(F_1, F_2,\theta)  \defas 2 R(F_1, F_2 - F_1, \theta)$
\end{enumerate}
for $F_1, \, F_2 \in GL^+(  d )$.
  We note that the choices \ref{Case2-newDissipation} and \ref{Case1-oldDissipation} only differ by a  second-order contribution in terms of $|F_2 - F_1|$. Indeed, an elementary computation yields
\begin{align} \label{calculationhelpful}
	\left(F_2-F_1\right)^{T} F_1+F_1^{T}\left(F_2-F_1\right)-\left(F_2^{T} F_2-F_1^{T} F_1\right)  =-\left(F_2^{T}-F_1^{T}\right)\left(F_2-F_1\right)   .
\end{align}
  As we will see later, both options  in the time-discrete approximation  lead to the same system of PDEs in the limit.

\noindent\textbf{Heat conductivity:}
The map $\hc \colon     \R_+     \to \R^{d \times d}_\sym$ will denote the \emph{heat conductivity tensor} of the material in the deformed configuration.
We require that $\hc$ is continuous, symmetric, uniformly positive definite, and bounded.
More precisely, for all     $\theta \geq 0$     it holds that
\begin{equation*}
	\ac \leq     \hc(\theta)     \leq \aC,
\end{equation*}
where the inequalities are meant in the eigenvalue sense.
We define the pull-back $\hcm \colon     GL^+(d)     \times \R_+ \to \R^{d \times d}_\sym$ of $\hc$ into the reference configuration by (see \cite[(2.24)]{MielkeRoubicek20Thermoviscoelasticity})
\begin{equation}\label{hcm}
	\hcm(F, \theta)     \defas \det(F) F^{-1}     \hc(\theta)     F^{-T}.
\end{equation}

\noindent\textbf{Equations of nonlinear thermoviscoelasticity:} Recall the notation $I  =  [0, T]$.
Let $f \in W^{1, 1}(I; L^2(\Omega; \R^d))$ be a time-dependent \emph{dead load}, $g \in W^{1, 1}(I; L^2(\Gamma_N; \R^d))$  be     a \emph{boundary traction}, and let $\bt \in  L^2 (I; L^2_+(\Gamma))$      be an external temperature. We study the coupled system
\begin{subequations}\label{main_evol_eq}
	\begin{align}
		\hspace{-0.2cm} 	f                                                              & =
		-\diver\big(
		\pl_F \felpot(\nabla y, \theta)
		+ \pl_{\dot F} \disspot(\nabla y, \nabla \partial_t y, \theta)
		-\diver(\pl_G \hypot(\nabla^2 y))
		\big) , \label{main_mechanical_eq}                                                                            \\
		\hspace{-0.2cm} 	-\theta \pl_\theta^2 \cplpot(\nabla y, \theta) \, \dot{\theta} & =
		\diver(\hcm(\nabla y, \theta) \nabla \theta)
		+ \drate(\nabla y, \nabla \partial_t y, \theta)
		+ \theta \pl_{F \theta} \cplpot(\nabla y, \theta) : \nabla \partial_t y         &   & \label{main_thermal_eq}
	\end{align}
\end{subequations}
  defined on $I \times  \Omega$,  
where  $\drate \colon \R^{d \times d} \times \R^{d \times d} \times \R_+ \to \R_+$  is the \emph{dissipation rate} defined as
\begin{align}\label{diss_rate}
	\drate(F, \dot F, \theta)
	\defas \pl_{\dot F} \disspot(F, \dot F, \theta) : \dot F
	= 2 F (V(C,\theta)\dot C) : \dot F
	= V(C, \theta) \dot C : \dot C
	= 2 R(F, \dot F,\theta).
\end{align}
Here, the equalities follow from \eqref{viscousstresstensor} and \ref{D_quadratic}.
The form of the heat equation \eqref{main_thermal_eq} is explained in \cite[(2.11)--(2.13)]{MielkeRoubicek20Thermoviscoelasticity}.
The positive factor $-\theta \pl_\theta^2 \cplpot(\nabla y, \theta)$, see \eqref{sec_deriv}, corresponds to the \emph{heat capacity} and the last term in \eqref{main_thermal_eq} is an \emph{adiabatic term}. The latter has no sign and can play the role as a heat source or sink, respectively. This feature is the starting point of the third experiment in Section~\ref{sec:Experiment3}.

As in \cite{MielkeRoubicek20Thermoviscoelasticity}, the mechanical equations are complemented with the boundary conditions
\begin{align}\label{main_bc}
	\hspace{-0.2cm}	\big(
	\pl_F \felpot(\nabla y, \theta)
	+ \pl_{\dot F} \disspot(\nabla y, \nabla \dot y, \theta)  -\diver(\pl_G \hypot(\nabla^2 y))  
	\big) \nu
	- \diver_S \big( \pl_G \hypot(\nabla^2 y) \nu \big)
	                  & =  g   &  & \hspace{-0.2cm} \text{on }     I \times     \Gamma_N, \notag \\
	\hspace{-0.2cm}	y & = h &  & \hspace{-0.2cm}  \text{on } I \times \Gamma_D,      \notag   \\
	\hspace{-0.2cm}	\pl_G \hypot(\nabla^2 y)  : (\nu \otimes \nu)
	                  & = 0    &  & \hspace{-0.2cm} \text{on }I \times  \Gamma,
\end{align}
and for the heat equation it holds that
\begin{align}\label{main_bc_heat}
	\hcm(\nabla y, \theta) \nabla \theta \cdot  \nu     + \kappa \theta
	= \kappa  \bt \qquad \text{ on } I \times \Gamma.
\end{align}
Here, $\nu$ denotes the outward pointing unit normal on $\Gamma$ and $\kappa \ge 0$ is a \emph{phenomenological heat-transfer coefficient} on $\Gamma$. Moreover, $\diver_S$   represents the \emph{surface divergence}, defined by $\diver_S(\cdot) = \trace(\nabla_S(\cdot))$, where $\trace$ denotes the trace and $\nabla_S \defas (\Id - \nu \otimes \nu) \nabla$ denotes the surface gradient. We refer to \cite[(2.28)--(2.29)]{MielkeRoubicek20Thermoviscoelasticity} for an explanation and derivation of the boundary conditions.

We impose the initial conditions
\begin{equation}\label{initial_cond}
	y(0, \cdot) =  y_0     \qquad \text{and} \qquad \theta(0, \cdot) = \theta_0
\end{equation}
for some  $\theta_0\in L^2_+(\Omega)$ and some $y_0     \in \Wid{0}  $.
We now define weak solutions associated to the initial-boundary-value problem \eqref{main_evol_eq}--\eqref{initial_cond}.

\begin{definition}[Weak solutions]\label{def:weak_formulation}
	A couple $(y, \theta) \colon I \times \Omega \to \R^d \times \R$ is called a \emph{weak solution} to the initial-boundary-value problem \eqref{main_evol_eq}--\eqref{initial_cond} if $y \in L^\infty(I; W^{2,p}(\Omega)) \cap H^1(I; H^1(\Omega; \R^d))$ with $y(0, \cdot)=  y_{0}   $ and $y(t) = h(t)$ a.e.\ on $I \times \Gamma_D$, $\theta \in L^1(I; W^{1,1}(\Omega))$ with $\theta \ge 0$ a.e., and if it satisfies the identities
	\begin{equation}\label{weak_limit_mechanical_equation}
		\begin{aligned}
			 & \intQ \pl_G \hypot(\nabla^2 y) \cdddot \nabla^2 z
			+ \Big(
			\pl_F \felpot(\nabla y, \theta)
			+ \pl_{\dot F} \disspot(\nabla y, \partial_t\nabla  y, \theta)
			\Big) : \nabla z \di x \di t                         \\
			 & \quad=   \intQ f \cdot z \di x \di t
			+   \int_I \int_{\Gamma_N} g \cdot z \di \haus^{d-1} \di t
		\end{aligned}
	\end{equation}
	for any test function $z \in C^\infty(I \times \overline{\Omega}; \R^d)$ with $z = 0$ on $I \times \Gamma_D$, as well as
	\begin{equation}\label{weak_limit_heat_equation}
		\begin{aligned}
			 & \intQ \hcm(\nabla y, \theta) \nabla \theta \cdot \nabla \vphi
			-\big(
			\drate (\nabla y, \partial_t \nabla y, \theta)
			+ \pl_F \cplpot(\nabla y, \theta) : \partial_t \nabla  y
			\big) \vphi
			- \inten(\nabla y, \theta) \partial_t \vphi \di x \di t                                 \\
			 & \quad + \kappa \int_I \int_{    \Gamma} \theta \vphi \di \haus^{d-1} \di t
			= \kappa  \int_I \int_{    \Gamma} \bt \vphi \di \haus^{d-1} \di t
			+ \int_\Omega \inten(\nabla  y_{0},     \theta_{0}) \, \vphi(0) \di x
		\end{aligned}
	\end{equation}
	for any test function $\vphi \in C^\infty(I \times \overline \Omega)$ with $\varphi(T) = 0$.
\end{definition}
  One can indeed show that sufficiently smooth weak solutions lead to the classical formulation \eqref{main_evol_eq} along with the boundary and initial conditions  \eqref{main_bc}--\eqref{initial_cond}, see \cite{RBMFMK,MielkeRoubicek20Thermoviscoelasticity}.

\subsection{Frame-indifferent  discretization in time}\label{sec:timediscrete}
Given a step size $\tau>0$ such that $T/\tau \in \N$,  we divide the time interval $[0,T]$ into $  T / \tau  $ intervals of length $\tau>0$
and solve global minimization problems iteratively, in order to define time-discrete deformations and temperature.  

  Define $\ysts{0} = y_0$ and  $\tsts{0} = \theta_0$. Suppose that we have already constructed $\ysts{0}$, $\tsts{0}$, \dots, $\ysts{k-1}$, and $\tsts{k-1}$ for some $k \in \lbrace 1 , \ldots T/\tau \rbrace$.   
The next mechanical step $\ysts{k}$ is a solution of the   problem
\begin{equation}\label{mechanical_step}
	\min_{y \in \Wid{k\tau}} \hspace{-0.1cm}\left\{
	\mechen(y) + \int_\Omega \hspace{-0.1cm}\cplpot(\nabla y, \tsts{k-1}) \di x
	+ \frac{1}{\tau} \mathcal D(\ysts{k-1}, y, \tsts{k-1})  
	- \int_\Omega\hspace{-0.1cm} \fst{k} \cdot y \di x
	- \int_{\Gamma_N}\hspace{-0.1cm} \gst{k} \cdot y \di \haus^{d-1}
	\right\},
\end{equation}
where $\fst k \defas \tau^{-1} \int_{(k-1)\tau}^{k\tau} f(t) \di t$  and  $g^{(k)}_\tau \defas \tau^{-1} \int_{(k-1)\tau}^{k\tau} g(t) \di t$. Here, $D^2$ either satisfies  \ref{Case2-newDissipation} or \ref{Case1-oldDissipation}.  
Considering a solution $\ysts k$ of the above minimization problem,
the next thermal step $\tsts{k}$ is a solution of the  problem
\begin{align}\label{thermal_step}
	 & \min_{\theta \in H^1_+(\Omega)}
	\Bigg\{
	\int_\Omega \int_0^\theta \frac{1}{\tau}\big(
	\inten(\nabla \ysts{k},s)
	- \inten(\nabla \ysts{k-1},\tsts{k-1})
	\big) \di s \di x                          \\
	 & + \int_\Omega \frac{1}{2} \nabla \theta
	\cdot \hcm(\nabla \ysts{k-1}, \tsts{k-1}) \nabla \theta \di x
	- \int_\Omega h_\tau(\ysts{k}, \ysts{k-1}, \tsts{k-1}) \, \theta \di x
	+ \frac{\kappa}{2} \int_\Gamma (\theta - \btst k)^2 \di \haus^{d-1} \notag
	\Bigg\},
\end{align}
where $h_\tau$ features source and sink contributions, defined below in \eqref{htaucase2} and  \eqref{htaucase1}, respectively,  and $ \btst k  \defas \tau^{-1} \int_{(k-1)\tau}^{k\tau} \theta_\flat(t) \di t$. To ensure that   both  the mechanical and thermal step satisfy time-discrete frame indifference (in the case $f = g \equiv 0$),  we need to guarantee that the minimization problems can be rewritten in terms of the multiplicative symmetrized strain $(\nabla \ysts{j})^T \nabla \ysts{j}$  for $j = k-1,k$.  Clearly, $W^{\rm el}$, $H$, $W^{\rm cpl}$, $W^{\rm in}$, and $\mathcal{K}$ satisfy frame indifference due to \ref{W_frame_invariace}, \ref{H_frame_indifference}, \ref{C_frame_indifference}, \eqref{hcm}, and the invariance of the determinant under rotations.
By the discussion in \eqref{def:seperateframeindifference}, the mechanical step is frame indifferent for option   \ref{Case2-newDissipation}.
The choice \ref{Case1-oldDissipation} from \cite{RBMFMK, MielkeRoubicek20Thermoviscoelasticity} does not  comply with  frame  indifference in the sense of \eqref{def:seperateframeindifference}: otherwise, given two rotations $S_1,S_2 \in SO(d)$ with $S_1 \neq S_2$, \ref{D_bounds} implies that
\begin{align*}
	0  =  D^2 (S_1^{-1} S_1, S_2^{-1} S_2,\theta) = D^2(S_1, S_2,\theta) \geq 2 c_0 \vert 2 \, \sym (S_1^T (S_2-S_1))\vert^2 > 0,
\end{align*}
  a contradiction.  
Similarly, the term $h_\tau$ from \cite{RBMFMK},  defined by  
\begin{align}\label{htaucase1}
	 & \quad \ h_\tau(\ysts{k}, \ysts{k-1}, \tsts{k-1})   \tag{$\mathrm{V}^{h}\hspace{-0.15cm}\mathrm{.\hspace{0.02cm}2}$} \\ &=
	\tau^{-2}\drate(\nabla \ysts{k-1}, \nabla \ysts k -\nabla \ysts{k-1} , \tsts{k-1}) +
	\tau^{-1} \pl_F \cplpot(\nabla \ysts{k-1}
	, \tsts{k-1}) : \big( \nabla \ysts k -\nabla \ysts{k-1} \big), \notag
\end{align}
does \emph{not} satisfy frame indifference. Therefore, for  our frame-indifferent approximation scheme, we  use a different definition of $h_\tau$, namely 
\begin{align}\label{htaucase2}
	 & h_\tau(\ysts{k}, \ysts{k-1}, \tsts{k-1})  =  \tau^{-2} D^2(\nabla \ysts{k-1}, \nabla \ysts{k}, \tsts{k-1}) \notag                                                                                                                                                 \\
	 & \qquad   + \tau^{-1} \partial_C \hat{W}^{\rm cpl} \big( (\nabla \ysts{k-1})^T \nabla \ysts{k-1}, \tsts{k-1} \big) : \big( (\nabla \ysts{k} )^T \nabla \ysts{k} - (\nabla \ysts{k-1})^T \nabla \ysts{k-1} \big), \tag{$\mathrm{V}^{h}\hspace{-0.15cm}\mathrm{.1}$}
\end{align}
where $D^2$ is as in \ref{Case2-newDissipation} and  $\hat{W}^{\rm cpl}$ is defined by ${\cplpot}(F, \theta ) = \hat{W}^{\rm cpl}(C, \theta )$ with $C = F^TF$, which is possible due to   \ref{C_frame_indifference}. Clearly, $\partial_C \hat{W}^{\rm cpl}$ is symmetric which implies   with  the chain rule     that
\begin{align}\label{eq: strange deriv}
	\partial_F {\cplpot}(F, \theta  ) =    F \big(\partial_C\hat{W}^{\rm cpl}(C, \theta  ) + (\partial_C\hat{W}^{\rm cpl}(C, \theta  ))^T\big)    = 2F\partial_C\hat{W}^{\rm cpl}(C,  \theta
	),
\end{align}
  see \cite[(3.17)]{RBMFMK}.  
Using again   \eqref{calculationhelpful} and the symmetries of $\partial_C \hat W^{\rm cpl}$, we  observe that  \eqref{htaucase2} and \eqref{htaucase1}   differ by lower order terms. This will be exploited in Lemma~\ref{lem:strong_strain_rates_conv} and Lemma~\ref{thm:convergence_heat_vanishing_tau} below.

\subsection{Main result}

We now proceed with the main result of our contribution, the convergence of frame-indifferent time-discrete solutions to solutions in the sense of Definition~\ref{def:weak_formulation}. Supposing that the steps $\ysts 0, \ldots,  \ysts{T / \tau}$ and $\tsts 0, \ldots,  \tsts{T / \tau}$ as described above exist, we define  interpolations as follows: for $k \in \setof{0, \ldots,  T / \tau}$,   we let $\overline{y}_\tau(k\tau) =   \underline{y}_\tau(k\tau) = \hat{y}_\tau(k\tau)  \defas  \ysts k$   and for  $t \in ((k-1)\tau, k\tau)$
\begin{align}\label{y_interpolations}
	\overline{y}_\tau(t)  & \defas \ysts k,    &
	\underline{y}_\tau(t) & \defas \ysts{k-1}, &
	\hat{y}_\tau(t)   \defas \frac{k \tau - t}{\tau} \ysts{k-1} + \frac{t - (k-1)\tau}{\tau} \ysts k.
\end{align}
A similar notation is employed for $\overline{\theta}_\tau$, $\underline{\theta}_\tau$, and $\hat{\theta}_\tau$.
We  formulate our main result concerning the convergence of solutions to the staggered scheme towards a weak solution in the sense of Definition~\ref{def:weak_formulation}.  
\begin{theorem}[Convergence of solutions]\label{thm:van_tau}
	Let $T>0$. Suppose that either \ref{Case2-newDissipation} and \eqref{htaucase2} or  \ref{Case1-oldDissipation} and  \eqref{htaucase1} hold. Then, there exists $\tau_0 \in (0, 1]$ such that for any $\tau \in (0, \tau_0)$  we have the   following: \\
	{\rm (i)}   (Well-definedness of the scheme)  The sequences $\ysts 0, \ldots,  \ysts{T / \tau}$ and $\tsts 0, \ldots,  \tsts{T / \tau}$ satisfying  \eqref{mechanical_step} and \eqref{thermal_step} exist.\\
	{\rm (ii)}  (Convergence)  There exist $y \in L^\infty(I; W^{2,p}(\Omega;\R^d))  \cap H^1(I; H^1(\Omega; \R^d)) $ and $\theta \in L^1(I; W^{1,1}(\Omega))$ such that  the couple $(y, \theta)$   is a solution in the sense of Definition~\ref{def:weak_formulation}  and,  up to selecting a subsequence, it holds  that
	\begin{align}
		\hat{y}_\tau                         & \to y \text{ in } L^\infty(I; W^{1,\infty}(\Omega;\R^d))
		                                     &                                                                                 & \text{and} &
		   \partial_t {\hat{y}}_\tau & \to      \partial_t y \text{ strongly in } L^2(I; H^1(\Omega; \R^d)),
		\label{van_tau_y_conv}                                                                                                                \\
		\hat{\theta}_\tau                    & \to \theta \text{ in }   L^s(I \times \Omega)
		                                     &                                                                                 & \text{and} &
		\hat{\theta}_\tau                    & \weakly \theta \text{ weakly in } L^r(I; W^{1,r}(\Omega))
		\label{van_tau_theta_conv}
	\end{align}
	as $\tau \to 0$  for any $r \in [1, \tfrac{d+2}{d+1})$ and $s \in [1, \frac{d+2}{d})$.
	The same holds true if we replace $\hat{y}_\tau$ by $\overline{y}_\tau$ or $\underline{y}_\tau$   in the first part of \eqref{van_tau_y_conv},  and $\hat{\theta}_\tau$ by $\overline{\theta}_\tau$ or $\underline{\theta}_\tau$  in \eqref{van_tau_theta_conv}.
\end{theorem}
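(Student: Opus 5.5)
The proof follows the strategy of \cite{RBMFMK}, adapted to the frame-indifferent dissipation \ref{Case2-newDissipation}, the modified source \eqref{htaucase2}, and the time-dependent Dirichlet data $h$.

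\textbf{Existence of the steps.} For (i) I would use the direct method in each step. In the mechanical step \eqref{mechanical_step}, coercivity in $W^{2,p}$ comes from \ref{H_bounds} and \ref{W_lower_bound}; note that \ref{Case2-newDissipation} is nonnegative by \ref{D_bounds}. The Healey--Krömer argument (using $q\ge \frac{pd}{p-d}$) gives $\det\nabla y\ge c>0$ uniformly on energy sublevels, so the constraint $\det \nabla y>0$ survives weak limits. Lower semicontinuity follows from convexity of $H$ together with compact embedding $W^{2,p}\subset C^1$, which handles all lower-order terms. Admissible competitors exist: for instance $h(k\tau)\circ h((k-1)\tau)^{-1}\circ \ysts{k-1}$ lies in $\Wid{k\tau}$.

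The thermal step \eqref{thermal_step} is a strictly convex coercive problem on $H^1$, since $\partial_s\inten\ge\ac$ and $\hcm(\nabla\ysts{k-1},\cdot)$ is uniformly elliptic thanks to the determinant and $W^{1,\infty}$ bounds. Nonnegativity of the minimizer is obtained by testing with $\theta^-$. For this one needs $h_\tau\ge0$ up to the adiabatic term, and I would handle the sign as in \cite{RBMFMK}: exploit $\pl_F\cplpot(F,0)=0$ and truncate, or use a comparison argument, with $\tau_0$ chosen small.

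\textbf{A priori estimates (main obstacle).} Compare $\ysts k$ with the competitor $\tilde y:=h(k\tau)\circ h((k-1)\tau)^{-1}\circ\ysts{k-1}$. This gives an energy inequality bounding $\mechen(\ysts k)+\frac1\tau\diss(\ysts{k-1},\ysts k,\tsts{k-1})$ in terms of $\mechen(\ysts{k-1})$, the work of $f,g$, a Lipschitz error $C\tau$ coming from $h$, and the coupling increment $\cplen(\ysts k,\tsts{k-1})-\cplen(\ysts{k-1},\tsts{k-1})$. This increment is the sign-less term. Summing it against the thermal step, tested with $\theta\equiv1$ (total energy balance), the dissipation terms cancel up to differences \eqref{calculationhelpful}, and the adiabatic term $\tau^{-1}\partial_C\hat W^{\rm cpl}:(C_k-C_{k-1})$ matches the coupling increment up to a second-order remainder bounded by \ref{C_bounds} as $C|\nabla\ysts k-\nabla\ysts{k-1}|^2$. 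Here the bound on $\partial_{F\theta}\cplpot$ gives growth at most linear in $\theta$.

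The key difficulty is that \ref{Case2-newDissipation} only controls $|C_k-C_{k-1}|^2$, not $|\nabla\ysts k-\nabla\ysts{k-1}|^2$. I would pass from one to the other with the matrix-root estimates (polar decomposition, $|\sqrt{C_k}-\sqrt{C_{k-1}}|\le c|C_k-C_{k-1}|$ on sets with uniformly bounded $F$ and $F^{-1}$), followed by the Ciarlet--Mardare rigidity estimate adapted to time-dependent boundary data (subtracting the boundary motion $h$). Together these yield
\[
\|\nabla\ysts k-\nabla\ysts{k-1}\|_{L^2}^2\le C\int_\Omega|C_k-C_{k-1}|^2\di x + C\tau^2 .
\]
With this, the second-order remainder is absorbed into the dissipation for $\tau<\tau_0$. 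A discrete Gronwall argument then gives $\sup_t\mechen(\ny)$, $\|\partial_t\ay\|_{L^2H^1}$ and $\sup_t\|\nt\|_{L^1}$ bounded. Boccardo--Gallouët-type tests with $1-(1+\theta)^{-\epsilon}$ give the $L^rW^{1,r}$ bounds for the temperature, and an Aubin--Lions argument combined with $\inten$ gives strong $L^s$ convergence.

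\textbf{Passage to the limit.} Weak compactness gives $\ay\to y$ in $L^\infty W^{1,\infty}$ and $\partial_t\ay\weakly\partial_ty$. The discrete Euler--Lagrange equations of \eqref{mechanical_step} differ from those of \cite{RBMFMK} only by terms of order $|\nabla\ysts k-\nabla\ysts{k-1}|^2/\tau$, which the above bound makes vanish in $L^1$. Strong convergence of strain rates is obtained as in Lemma~\ref{lem:strong_strain_rates_conv}: test the mechanical equation with $\ay-y$, use monotonicity of the viscous part in $\dot C$ and uniform positivity from \ref{D_bounds}, and upgrade from $\dot C$ to $\nabla\dot y$ via rigidity/Korn with variable coefficients. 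Then $h_\tau$ in \eqref{htaucase2} converges to $\drate+\pl_F\cplpot:\partial_t\nabla y$ in $L^1$, again using \eqref{calculationhelpful} and \eqref{eq: strange deriv}, and the heat equation passes to the limit exactly as in Lemma~\ref{thm:convergence_heat_vanishing_tau}. The case \ref{Case1-oldDissipation} with \eqref{htaucase1} is \cite{RBMFMK} plus the boundary-data modification.
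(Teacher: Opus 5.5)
There is a genuine gap in your a priori estimates, which are the core of the theorem. You add the minimality inequality of the mechanical step (obtained by comparison with the competitor $\tilde y$) to the thermal step tested with $\varphi\equiv 1$, and you claim the dissipation cancels up to \eqref{calculationhelpful}. It does not cancel. Minimality only provides $\frac1\tau\diss(\ysts{k-1},\ysts k,\tsts{k-1})=\frac{1}{2\tau}\int_\Omega D^2\di x$ on the left. The term $\tau\int_\Omega h_\tau\di x$ in \eqref{el_thermal_step} contains $\frac1\tau\int_\Omega D^2\di x$. So a net $+\frac{1}{2\tau}\int_\Omega D^2\di x$ remains on the right. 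Summed over $k$, this is of order one, and it is exactly the quantity you want to bound.

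The near-cancellation you have in mind only occurs if you test the Euler--Lagrange equation \eqref{mechanical_step_single} with $\ysts k-\ysts{k-1}$, as in \cite{RBMFMK}. That route has three problems:
\begin{itemize}
\item You need $\Lambda$-convexity of $\mechen$, and its constant depends on bounds for the unknown iterate $\ysts k$.
\item With $\Delta F\defas\nabla\ysts k-\nabla\ysts{k-1}$, the viscous work is $\frac1\tau\int_\Omega D^2\di x$ plus the cubic term $\frac1\tau\int_\Omega V(C^{(k-1)}_\tau,\tsts{k-1})[(\Delta F)^T\Delta F,C^{(k)}_\tau-C^{(k-1)}_\tau]\di x$. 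This term is comparable to the dissipation unless $\Vert\Delta F\Vert_{L^\infty}$ is already known to be small.
\item The dissipation then cancels completely, so nothing is left into which your remainder $C\Vert\Delta F\Vert_{L^2}^2$ could be absorbed.
\end{itemize}
This is why the paper states in Remark~\ref{rem:differenceapriori} that the total-energy route of \cite{RBMFMK} is not used for \ref{Case2-newDissipation}/\eqref{htaucase2}.

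Your passage from $|C^{(k)}_\tau-C^{(k-1)}_\tau|$ to $|\Delta F|$ is also circular. Using matrix roots where $F,F^{-1}$ are bounded, and \eqref{nonlinkornwithtimeboundary} with a uniform constant, presupposes $\det\nabla\ysts k\ge\mu$ and $\Vert\ysts k\Vert_{W^{2,p}}\le\mu^{-1}$. That is the bound on $\mechen(\ysts k)$ you are trying to prove. Crude one-step bounds give constants that grow with the induction hypothesis, so the Gronwall induction need not close.

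The missing idea is to bound the sign-less coupling increment directly inside the mechanical comparison \eqref{comparison_yn}, pointwise and without the thermal step. By \ref{C_frame_indifference}, polar decomposition, \ref{C_lipschitz} and \eqref{ineq:matrixcalc}, one has $|\cplpot(\nabla y_n,\theta)-\cplpot(\nabla\ysts{k-1},\theta)|\le C(1+|\nabla y_n|+|\nabla\ysts{k-1}|)\,\alpha_{n,1}^{-1}|C_n-C^{(k-1)}_\tau|$. Here the smallest singular value satisfies $\alpha_{n,1}\ge\det(\nabla y_n)\,|\nabla y_n|^{1-d}$.

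Young's inequality absorbs half of this into $\frac1\tau\diss$. The strengthened growth condition $r\ge\frac{2dq}{q-2}$ in \ref{W_lower_bound}, which your proposal never uses, bounds the error by $C\tau\big(1+\int_\Omega W^{\rm el}(\nabla y_n)\di x+\int_\Omega W^{\rm el}(\nabla\ysts{k-1})\di x\big)$. The constant $C$ here is independent of any a priori bound.

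The rest then follows in order:
\begin{itemize}
\item This estimate gives Lemma~\ref{lem:bad_mechen_bound}, and the Gronwall argument closes for $\mechen$.
\item Only afterwards is \eqref{nonlinkornwithtimeboundary} applied, now with uniform constants, to bound $\sum_l\tau\Vert\ddif\ysts l\Vert^2_{W^{1,2}}$.
\item The $L^1$-bound on the temperature then follows by testing \eqref{el_thermal_step} with $1$ and using $|h_\tau|\le C\tau^{-2}D^2+C$.
\end{itemize}

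A smaller point concerns strong convergence of the strain rates. Testing with $\ay-y$ gives no monotonicity in the rate. Testing with $\partial_t(\ay-y)$ is not admissible because of the $\hypot$-term. The paper instead compares the discrete energy inequality with the continuous energy balance \eqref{cont_energy_balance}.
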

As discussed, the  novelty is the convergence result in the case \ref{Case2-newDissipation} and \eqref{htaucase2}.  
Convergence in the case \ref{Case1-oldDissipation} and \eqref{htaucase1} was proved in \cite{RBMFMK} for the special case of non-varying boundary conditions. To this end, in the next sections we will only focus on the case   \ref{Case2-newDissipation}  and \eqref{htaucase2}.

\section{Preliminaries}\label{sec:analyticalresult}

In this section we  state and prove technical estimates that are needed later.

\subsection{Preliminary energy estimates}
Let us start with some fundamental auxiliary results.  The first lemma derives bounds on the deformation stemming from the mechanical energy.  
\begin{lemma}[A priori estimates, positivity of determinant]\label{lem:pos_det}
	Given $M > 0$ there exists a constant $C_M > 0$ such that for all $y \in \Wid{t}$ with $\mechen(y) \leq M$ (where $\mechen$ is defined in \eqref{mechanical_energy}) it holds that
	\begin{align}\label{pos_det}
		\hspace{-0.2cm}\norm{y}_{W^{2, p}(\Omega)} \leq C_M, \ \
		\norm{y}_{C^{1, 1-d/p}(\Omega)} \leq C_M, \ \
		\norm{(\nabla y)^{-1}}_{C^{1 - d/p}(\Omega)} \leq C_M, \ \
		\det(\nabla y) \geq \frac{1}{ C_M} \text{ in } \Omega.
	\end{align}
\end{lemma}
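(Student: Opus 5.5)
The plan is to follow the argument of Healey--Krömer, adapted to time-dependent boundary data as in \cite{RBMFMK,MielkeRoubicek20Thermoviscoelasticity}. I proceed in four steps: bound the Hessian, bound the full $W^{2,p}$ norm via the boundary condition, bound the determinant from below, and finally treat the inverse gradient.

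\textbf{Step 1: Hessian bound.} From $\mechen(y) \le M$ and the nonnegativity of $\elpot$, assumption \ref{H_bounds} gives
\[
\ac\norm{\nabla^2 y}_{L^p}^p \le M .
\]

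\textbf{Step 2: full $W^{2,p}$ bound.} To control the lower-order terms I use the Dirichlet condition $y = h(t)$ on $\Gamma_D$. Since $h \in W^{1,\infty}(I;W^{2,p})$, we have $\sup_t \norm{h(t)}_{W^{2,p}} \le C$ uniformly in $t$. Applying a Poincaré-type inequality to $y - h(t)$, which vanishes on $\Gamma_D$ with $\haus^{d-1}(\Gamma_D)>0$, gives
\[
\norm{y-h(t)}_{W^{2,p}} \le C\bigl(\norm{\nabla^2 y}_{L^p} + \norm{\nabla^2 h(t)}_{L^p}\bigr).
\]
Alternatively, the growth $W^{\rm el}_1(F)\ge \ac|F|^r-\aC$ bounds $\norm{\nabla y}_{L^r}$ directly, and the trace controls the constant part. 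Either route yields $\norm{y}_{W^{2,p}} \le C_M$. Since $p>d$, Morrey's embedding $W^{2,p}\hookrightarrow C^{1,1-d/p}$ then gives the second estimate.

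\textbf{Step 3: lower bound on the determinant (main obstacle).} Set $\delta \defas \det \nabla y > 0$, which lies in $C^{0,\alpha}$ with $\alpha = 1-d/p$ and $\norm{\delta}_{C^{0,\alpha}} \le C_M$, because the determinant is a polynomial in the entries of $\nabla y$. By \ref{W_lower_bound}, $\int_\Omega \delta^{-q} \le M/\ac$.

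Suppose $\delta(x_0) = \eps$ is small. Then on the ball $B_\rho(x_0)\cap\Omega$ with $\rho = (\eps/C_M)^{1/\alpha}$ we have $\delta \le 2\eps$. Since $\Omega$ is Lipschitz, this set has measure at least $c\rho^d$. Hence
\[
M/\ac \ge c\,\eps^{-q}\eps^{d/\alpha} = c\,\eps^{d/\alpha - q}.
\]
The hypothesis $q \ge pd/(p-d) = d/\alpha$ makes the exponent nonpositive. In the strict case $q > d/\alpha$ the right-hand side blows up as $\eps\to0$, which gives the lower bound $\eps \ge 1/C_M$.

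In the borderline case $q = d/\alpha$ I would refine the argument. The idea is to compare $\delta$ with the value at a point where it is larger and integrate $\int_{B_\rho}(\eps + C|x-x_0|^\alpha)^{-q}$, which diverges logarithmically as $\eps\to0$. This is the delicate step, and I expect it to carry the main difficulty of the proof.

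\textbf{Step 4: inverse gradient.} Write
\[
(\nabla y)^{-1} = \operatorname{cof}(\nabla y)^T/\det\nabla y .
\]
The cofactor matrix is bounded in $C^{0,\alpha}$ by Step 2, and $1/\det\nabla y$ is bounded in $C^{0,\alpha}$ because the determinant is $C^{0,\alpha}$ and bounded below by $1/C_M$ from Step 3. A product of such functions is again $C^{0,\alpha}$, which gives the third estimate.
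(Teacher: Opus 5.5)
This is essentially the paper's proof. The paper cites the Healey--Kr\"omer argument in the form of Mielke--Roubi\v{c}ek, Theorem~3.1, and handles the moving Dirichlet data by a Poincar\'e inequality for $y-h(t)$ plus the triangle inequality; your Steps~3--4 reconstruct that cited argument.

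You should delete the first route in Step~2. The inequality $\norm{u}_{W^{2,p}(\Omega)}\le C\norm{\nabla^2 u}_{L^p(\Omega)}$ for $u=0$ on $\Gamma_D$ is false whenever $\Gamma_D$ lies in a hyperplane $\{x\cdot n=c\}$ (e.g.\ one face of a box, which the setting allows). Indeed, $u(x)=a\,(x\cdot n-c)$ vanishes on $\Gamma_D$ and has $\nabla^2u=0$.

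Your second route is the correct one, and it is what the paper does. Since $\hypot\ge0$ and $W^{\rm el}_2\ge0$, you have $\int_\Omega W^{\rm el}_1(\nabla y)\,dx\le M$, which bounds $\norm{\nabla y}_{L^r(\Omega)}$. Combined with the Hessian bound via Poincar\'e--Wirtinger for $\nabla y$, this bounds $\norm{\nabla y}_{L^p(\Omega)}$. The first-order Poincar\'e inequality for $y-h(t)$ then bounds $\norm{y}_{L^p(\Omega)}$ uniformly in $t$.

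Step~3 needs two minor corrections.
\begin{itemize}
\item Since $W^{\rm el}_1$ may be negative, the bound is $\int_\Omega\delta^{-q}\,dx\le (M+\aC|\Omega|)/\ac$ rather than $M/\ac$.
\item In the borderline case $q=d/\alpha$, the integral must be taken over $\Omega\cap B_R(x_0)$ with $R$ \emph{independent} of $\eps$. Over the shrinking ball $B_\rho$ it stays bounded, so no contradiction arises there. On the fixed ball, use $\delta(x)\le\eps+C_M|x-x_0|^\alpha$ on all of $\Omega$ and $|\Omega\cap B_s(x_0)|\ge cs^d$ for $s\le R$. This gives $\int_\Omega\delta^{-q}\,dx\ge c\log\bigl(C_MR^\alpha/\eps\bigr)$, hence $\eps\ge 1/C_M$, and it covers the strict case at the same time.
\end{itemize}
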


\begin{proof}
	The statement is proved in \cite[Theorem~3.1]{MielkeRoubicek20Thermoviscoelasticity} relying on a result in \cite{HealeyKroemer09Injective}.   Notice that the time-dependent boundary conditions $h$ only play a role in the first two estimates. More precisely, the norm  $\Vert y - h(t)  \Vert_{L^p(\Omega)}$ is controlled by the Poincaré inequality. Hence, the triangle inequality and the imposed regularity of $h$ in \eqref{Yid} allow  to take a uniform constant $C_M>0$ such that \eqref{pos_det} holds for all $t \in I$.  
\end{proof}

The next two lemmas address bounds which immediately follow from the assumptions on the potentials in Section~\ref{sec:themodel}, where we note that the proof of \eqref{bound_hcm} additionally relies on Lemma~\ref{lem:pos_det}.  
\begin{lemma}[Estimate on coupling potentials]
	Then, there exists a constant $C>0$ such that for all $F \in  GL^+(d)$,   $\dot F \in \R^{d \times d}$,  and $\theta \geq 0$ it holds that
	\begin{align}
		\vert \partial_F W^{\rm cpl} (F,   \theta   ) \vert + \vert \partial_F W^{\rm in}(F,  \theta   ) \vert & \leq C (  \theta    \wedge 1 ) (1 + \vert F \vert).  \label{C_locally_lipschitz}
	\end{align}
\end{lemma}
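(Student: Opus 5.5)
Both bounds in \eqref{C_locally_lipschitz} will come from integrating the mixed-derivative bound in \ref{C_bounds} in $\theta$, starting at zero temperature, and splitting into the regimes $\theta\le 1$ and $\theta\ge1$. The degenerate behaviour of that bound at $\theta=0$ is handled by \ref{C_zero_temperature} and the continuity in \ref{C_regularity}.

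\emph{Bound on $\partial_F\cplpot$.} By \ref{C_zero_temperature}, $\cplpot(\cdot,0)\equiv 0$, so $\partial_F\cplpot(F,0)=0$; this is understood through \ref{C_lipschitz}, which bounds difference quotients uniformly in $\theta\ge0$. For $\theta>0$ and $0<\delta<\theta$ write
\[
\partial_F\cplpot(F,\theta)=\partial_F\cplpot(F,\delta)+\int_\delta^\theta \partial_{F\theta}\cplpot(F,s)\di s .
\]
I then pass $\delta\to0$. The function $\partial_F \cplpot(F,\cdot)$ is bounded near zero: by \ref{C_lipschitz}, $|\partial_F\cplpot(F,\delta)|\le \aC(1+2|F|)$. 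The uniform $C^2$ bound $|\partial^2_F\cplpot|\le\aC$ gives equicontinuity in $F$, and continuity of $\cplpot$ up to $\theta=0$ together with $\cplpot(\cdot,0)=0$ identifies every limit of $\partial_F\cplpot(F,\delta)$ as $\delta\to0$ with $0$ (limits of derivatives of functions converging uniformly on compacts to $0$, with equicontinuous derivatives). Using $|\partial_{F\theta}\cplpot(F,s)|\le \aC(1+|F|)/\max\{s,1\}$ then gives
\[
|\partial_F\cplpot(F,\theta)|\le \aC(1+|F|)\int_0^\theta \frac{\di s}{\max\{s,1\}},
\]
which is $\le \aC\theta(1+|F|)$ for $\theta\le1$ and $\le \aC(1+\log\theta)(1+|F|)$ for $\theta\ge 1$.

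The logarithmic growth for large $\theta$ is the main obstacle, since the claim only allows $\theta\wedge1$. To handle $\theta\ge1$ I would instead use \ref{C_lipschitz} directly: it gives $|\partial_F\cplpot(F,\theta)|\le \aC(1+2|F|)$ uniformly in $\theta$, by letting $\tilde F\to F$ along every direction. Combining the two regimes yields $|\partial_F\cplpot|\le C(\theta\wedge1)(1+|F|)$.

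\emph{Bound on $\partial_F\inten$.} Here $\partial_F\inten=\partial_F\cplpot-\theta\,\partial_{F\theta}\cplpot$. The first term is controlled by the previous step. For the second term, \ref{C_bounds} gives
\[
\theta\,|\partial_{F\theta}\cplpot(F,\theta)|\le \aC(1+|F|)\,\frac{\theta}{\max\{\theta,1\}}=\aC(\theta\wedge1)(1+|F|).
\]
At $\theta=0$, $\inten(F,0)=0$ for all $F$ by the continuous extension noted after the definition of $\inten$, so the estimate holds trivially there. Adding the two bounds gives \eqref{C_locally_lipschitz} with a constant depending only on $\aC$.
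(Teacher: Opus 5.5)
Your argument is correct and is the standard proof behind the paper's one-line citation of \cite[Lemma~4.4]{RBMFLM}: for $\theta\le 1$ you integrate the mixed-derivative bound from \ref{C_bounds} in $\theta$ starting from $\partial_F\cplpot(F,0^+)=0$; for $\theta\ge 1$ you use \ref{C_lipschitz}; and you control $\partial_F\inten=\partial_F\cplpot-\theta\,\partial_{F\theta}\cplpot$ via $\theta/\max\{\theta,1\}=\theta\wedge 1$. Your Arzel\`a--Ascoli justification of $\partial_F\cplpot(F,\delta)\to 0$ is valid but slightly more than needed: the same mixed-derivative bound already makes $\delta\mapsto\partial_F\cplpot(F,\delta)$ Lipschitz on $(0,1]$, so the limit exists, and integrating along segments identifies it as $0$.
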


\begin{proof}
	The statement is proved in  \cite[Lemma~4.4]{RBMFLM}.
\end{proof}

\begin{lemma}[Heat conductivity and dissipation]\label{lem:bound_hcm}
	For any $M > 0$ there exist constants $c_M, \, C_M > 0$ such that for $y \in \Wid{t}$ satisfying  $\mechen(y) \leq M$ and $\theta \in L^{1}(\Omega)$ we have that $\hcm(\nabla y, \theta)$ is well-defined and
	\begin{equation}\label{bound_hcm}
		c_M \leq \hcm(\nabla y, \theta) \leq C_M.
	\end{equation}
	Consider the case  \ref{Case2-newDissipation}.
	Then, there exists some $0< c_0 <C_0$ such that
	\begin{equation}\label{bound_V1V2dissipation}
		c_0 \vert F_2^T F_2 - F_1^T F_1 \vert^2 \leq D^2(F_1, F_2,\theta,x) \leq C_0 \vert F_2^T F_2 - F_1^T F_1 \vert^2
	\end{equation}
	for $F_1, \, F_2 \in  GL^+(d)$.
\end{lemma}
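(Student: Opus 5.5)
The plan is to treat the two claims separately, since both follow from pointwise linear algebra once Lemma~\ref{lem:pos_det} is available.

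\emph{Bounds on the heat conductivity.} For \eqref{bound_hcm}, recall from \eqref{hcm} that $\hcm(F,\theta) = \det(F) F^{-1}\hc(\theta)F^{-T}$. Since $\mechen(y)\le M$ and $y\in\Wid{t}$, Lemma~\ref{lem:pos_det} gives three pointwise facts in $\Omega$:
\begin{itemize}
\item $\det(\nabla y)\ge 1/C_M$;
\item $\abs{\nabla y}\le C_M$, by the $C^{1,1-d/p}$ bound;
\item $\abs{(\nabla y)^{-1}}\le C_M$.
\end{itemize}
In particular $\nabla y(x)\in GL^+(d)$ everywhere. Any $\theta$ will do for well-definedness, since we only need $\hc(\theta(x))$ to be defined pointwise; if needed, use $\theta\ge 0$ or replace it by its positive part, as $\hc$ is defined on $\R_+$.

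For the upper bound, take $\xi\in\R^d$ and compute
\[
\xi\cdot\hcm\xi=\det(F)\,(F^{-T}\xi)\cdot\hc(\theta)(F^{-T}\xi)\le \aC\det(F)\abs{F^{-1}}^2\abs{\xi}^2 .
\]
Then $\det F\le C\abs{F}^d$ is bounded, so the right-hand side is at most $C_M\abs{\xi}^2$.

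For the lower bound, estimate
\[
\xi\cdot\hcm\xi\ge \ac\det(F)\abs{F^{-T}\xi}^2 .
\]
Since $\abs{\xi}=\abs{F^{T}F^{-T}\xi}\le\abs{F}\abs{F^{-T}\xi}$, this gives $\xi\cdot\hcm\xi\ge \ac C_M^{-1}C_M^{-2}\abs{\xi}^2$.

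\emph{Bounds on the dissipation.} For \eqref{bound_V1V2dissipation}, in case \ref{Case2-newDissipation} we have
\[
D^2(F_1,F_2,\theta)=V(F_1^TF_1,\theta)[\dot C,\dot C]\quad\text{with}\quad \dot C\defas F_2^TF_2-F_1^TF_1 .
\]
This $\dot C$ is symmetric. Assumption \ref{D_bounds} applies to every $C\in\R^{d\times d}_\sym$ and $\theta\ge0$, so in particular to $C=F_1^TF_1$. It yields
\[
2c_0\abs{\dot C}^2\le D^2\le 2C_0\abs{\dot C}^2 .
\]
Renaming the constants gives the claim; the extra argument $x$ in the statement plays no role.

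\emph{Main obstacle.} The only delicate point is in the first part: ensuring the bounds hold uniformly in $x$ and $t$. This rests entirely on Lemma~\ref{lem:pos_det}, which gives constants independent of $t$ despite the time-dependent Dirichlet data. The rest is elementary.
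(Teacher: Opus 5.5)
Your proof is correct and follows the same route as the paper. The paper gets \eqref{bound_hcm} by citing \cite[Lemma~3.3]{RBMFMK} together with the $t$-uniform bounds of Lemma~\ref{lem:pos_det}, which is exactly the pointwise quadratic-form argument you write out, and it reads \eqref{bound_V1V2dissipation} off directly from \ref{D_bounds}, as you do, with the factor $2$ absorbed into renamed constants.
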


\begin{proof}
	\eqref{bound_hcm} is proved in \cite[Lemma~3.3]{RBMFMK},   where we note that the constant is uniform in $t \in I$ as discussed in the proof of \eqref{pos_det}. \eqref{bound_V1V2dissipation} follows from the assumptions on the tensor $V$, see \ref{D_bounds}.
\end{proof}

\subsection{Matrix calculus}\label{sec:matrixcalc}
The following lemma is   one of the main ingredients to derive compactness properties by using the frame-indifferent approximation scheme. In fact, we will need a bound on $\vert \sqrt{C_k} -  \sqrt{C_{k-1}} \vert$, where $C_j = (\nabla \ysts{j})^T \nabla \ysts{j}$ for $j=k-1,k$.  
\begin{lemma}
	Let $D,A \in \R^{d \times d}$ be symmetric matrices with positive eigenvalues.  Then, we have
	\begin{align}\label{ineq:matrixcalc}
		|A-D| \leq \frac{1}{\alpha+\tilde{\alpha}}\left|A^2-D^2\right|,
	\end{align}
	where $\alpha, \tilde{\alpha}>0$ are the minimal eigenvalues of $D$ and $A$, respectively.
\end{lemma}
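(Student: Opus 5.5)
The plan is to express $A^2-D^2$ as a Sylvester-type operator applied to the difference $X \defas A-D$, and then extract the lower bound by testing with $X$ itself. Everything is elementary linear algebra; no earlier result of the paper is needed beyond the notation for the Frobenius product $:$ and norm $\abs{\cdot}$.

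First, I record the algebraic identity
\begin{align*}
A^2 - D^2 = A(A-D) + (A-D)D = AX + XD .
\end{align*}
It holds for arbitrary square matrices: the cross terms $-AD$ and $+AD$ cancel, so no commutativity is needed. Here $X$ is symmetric, because both $A$ and $D$ are.

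Second, I take the Frobenius product of this identity with $X$:
\begin{align*}
(A^2-D^2):X = AX:X + XD:X = \trace(X^T A X) + \trace(X^T X D) = \trace(XAX) + \trace(XDX),
\end{align*}
where I used $X^T=X$ and the cyclicity of the trace. Since $A \geq \tilde\alpha \Id$ in the sense of quadratic forms, I get $XAX \geq \tilde\alpha X^2$, i.e.\ $v\cdot XAXv = (Xv)\cdot A(Xv) \geq \tilde\alpha\abs{Xv}^2$. Taking traces gives $\trace(XAX) \geq \tilde\alpha \trace(X^2) = \tilde\alpha\abs{X}^2$. In the same way, $\trace(XDX)\geq \alpha\abs{X}^2$.

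Third, Cauchy--Schwarz for the Frobenius product yields
\begin{align*}
(\alpha+\tilde\alpha)\abs{X}^2 \leq (A^2-D^2):X \leq \abs{A^2-D^2}\,\abs{X}.
\end{align*}
If $X = 0$ there is nothing to prove; otherwise I divide by $\abs{X}$ to obtain \eqref{ineq:matrixcalc}.

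The only point requiring care is the second step: recognizing that the two mixed terms $AX:X$ and $XD:X$ both reduce to traces of congruences $XAX$ and $XDX$. This is what lets the positivity of $A$ and of $D$ each be used separately, even though $A$ and $D$ need not commute. I expect no genuine obstacle beyond this observation.
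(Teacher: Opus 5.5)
Your proof is correct, and it takes a genuinely different route from the paper's in the key step. Both arguments start from $A^2-D^2=AX+XD$ with $X=A-D$.

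The paper then squares the norm, $|A^2-D^2|^2=|AX|^2+|XD|^2+2\,XD:AX$, and bounds all three terms. The mixed term $XD:AX=\trace(DXAX)$ involves $A$ and $D$ at once. To handle it, the paper reduces to $D$ diagonal, writes $A=S^T\hat A S$ and $B=SX$, and computes entrywise that $\hat A B D:B=\sum_{i,j}\tilde\lambda_i\lambda_j b_{ij}^2\ge\alpha\tilde\alpha|X|^2$.

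You instead pair the identity with $X$ itself and close with Cauchy--Schwarz. That way you only encounter $\trace(XAX)$ and $\trace(XDX)$, each containing just one of the two matrices. The quadratic-form bounds $A\ge\tilde\alpha\Id$ and $D\ge\alpha\Id$ then suffice, with no diagonalization or change of basis.

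Both proofs establish the same fact: the Frobenius-self-adjoint map $X\mapsto AX+XD$ has eigenvalues $\tilde\lambda_i+\lambda_j$ and is therefore bounded below by $\alpha+\tilde\alpha$. The paper shows this through the norm of the image. You show it through the coercivity pairing, which is the shorter and more transparent path.

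The paper's computation does make the contribution of each eigenvalue pair explicit. It also yields the intermediate bound $|A^2-D^2|^2\ge|AX|^2+|XD|^2+2\alpha\tilde\alpha|X|^2$, although this is not needed for the statement. Your version also avoids a small slip in the paper's display for $|AM|^2$, where the lower bound should read $\tilde\alpha^2|M|^2$ rather than $\alpha^2|M|^2$.
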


\begin{proof}
	Without loss of generality, we can assume that $D$  is a diagonal matrix. Indeed, if $D$ is not a diagonal matrix, we find  a diagonal matrix $\tilde D$ and a rotation $\tilde S \in SO(d)$ such that $D = \tilde S^T \tilde D \tilde S$. Since the Frobenius norm is invariant under rotations, we thus have
	\begin{align*}
		\vert A - D \vert = \vert \tilde A - \tilde D \vert, \qquad \vert A^2 - D^2 \vert = \vert (\tilde A)^2 - (\tilde D)^2 \vert,
	\end{align*}
	where $\tilde A = \tilde S A \tilde S^T$.

	Consider  symmetric matrices $D,A \in \R^{d \times d}$ with positive eigenvalues such that $D$ is a diagonal matrix. Let $\alpha \leq \lambda_1 \leq \ldots \leq \lambda_d $ be the eigenvalues of $D$ and $\tilde{\alpha} \leq \tilde{\lambda}_1 \leq \ldots \leq \tilde{\lambda}_d $ be the eigenvalues of $A$.
	Let $M \defas A-D$ and let $\hat{A}$ be the diagonalization of $A$, i.e., there exists some rotation $S \in SO(d)$ such that $S^T \hat A S = A$. Setting $B \defas SM$ with entries $b_{ij}$ for $i,j \in \{1, \ldots, d\}$,
	the symmetry of $A$ and the diagonality of $D$ and $\hat{A}$ imply that
	\begin{align}\label{eq:computation1matrix}
		(A-D) D : A (A-D) & = AMD:M
		= \hat A S M D : SM
		= \hat A B D: B \notag                                                                                                                   \\
		                  & = \begin{pmatrix}
			                      \tilde \lambda_1 \lambda_1 b_{11} & \tilde \lambda_1 \lambda_2 b_{12} & \ldots & \tilde \lambda_1 \lambda_d b_{1d} \\
			                      \tilde \lambda_2 \lambda_1 b_{21} & \ldots                            & \ldots & \tilde \lambda_2 \lambda_d b_{2d} \\
			                      \ldots                            & \ldots                            & \ldots & \ldots                            \\
			                      \tilde \lambda_d \lambda_1 b_{d1} & \ldots                            & \ldots & \tilde \lambda_d \lambda_d b_{dd} \\
		                      \end{pmatrix} : B \notag \\
		                  & \geq \alpha \tilde \alpha \vert B \vert^2
		= \alpha \tilde \alpha \vert A - D \vert^2.
	\end{align}
	In a similar spirit, it follows that
	\begin{align}\label{eq:computation2matrix}
		\vert A M \vert^2 =\hat{A}^2 S M :  SM  \geq \alpha^2 \vert M \vert^2.
	\end{align}
	Thus, \eqref{eq:computation1matrix} and \eqref{eq:computation2matrix} imply that  
	\begin{align*}
		\vert A^2 - D^2 \vert^2 & = \vert A (A-D) + (A-D) D \vert^2
		= \vert A (A-D) \vert^2 + \vert (A-D) D\vert^2 + 2 (A-D) D: A (A-D)                                 \\
		                        & \geq \left(\alpha^2+2 \alpha \tilde \alpha+\tilde{\alpha}^2\right)|A-D|^2
		=|\alpha+\tilde{\alpha}|^2|A-D|^2 .
	\end{align*}
	We conclude by taking the square root.
\end{proof}

\subsection{Nonlinear Korn inequalities and time-dependent Dirichlet boundary conditions}\label{sec:korn}
To derive coercivity of the strain rates in the case \ref{Case2-newDissipation}, we rely on a generalization of the   celebrated nonlinear rigidity estimate by {\sc Friesecke, James and Müller} \cite{FrieseckeJamesMueller:02} obtained by {\sc Ciarlet and Mardare} \cite{CiarletMardare},
while one uses linearized versions (nonlinear Korn inequalities \cite{Neff,Pompe}) in the case \ref{Case1-oldDissipation}, see \cite{RBMFMK,MielkeRoubicek20Thermoviscoelasticity}.
More precisely, we revisit these estimates in the versions of \cite[Lemma~4.2]{Machillpvisco} and \cite[Theorem 3.1 (i)]{RBMFLM} by implementing time-dependent Dirichlet boundary conditions.  
We first state the main result of this section.
\begin{theorem}[Nonlinear Korn inequalities for time-dependent Dirichlet boundary conditions]\label{coercivitystrainrates}
	Let $y_i \in \Wid{t_i}$ such that $\det (\nabla y_i ) > \mu$ and $\Vert \nabla y_i \Vert_{  W^{1,p}(\Omega)} \leq 1/\mu$ for some   $\mu>0$, $i = 1,2$,  and some $0\leq t_1 \leq t_2 \leq T$. Assume that $h \in W^{1,\infty}( I ; W^{2,p}(\Omega;\R^d))$ with $  \inf_{I \times \Omega}\det(\nabla h)  \geq \mu > 0$. Then there exist   constants $C_\mu=C(\Omega, \Gamma_D, h, \mu)>0$ and   $\tilde C = C(\Omega, \Gamma_D, h)>0$ such that
	\begin{subequations}
		\begin{equation}\label{nonlinkornwithtimeboundary}
			\Vert y_i - y_j \Vert_{W^{1,2}(\Omega)}  \leq C_\mu   \Vert (\nabla y_i)^T \nabla y_i - (\nabla y_j)^T \nabla y_j \Vert_{L^2(\Omega)} + \tilde C \vert t_j - t_i\vert,
		\end{equation}
		\begin{equation}\label{pompewithtimeboundary}
			\Vert y_i - y_j \Vert_{W^{1,2}(\Omega)}  \leq C_\mu   \Vert \sym \big( (\nabla y_i ) ^T (\nabla y_i - \nabla y_j) \big) \Vert_{L^2(\Omega)} + \tilde C \vert t_j - t_i\vert.
		\end{equation}
	\end{subequations}
\end{theorem}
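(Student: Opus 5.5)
We reduce both estimates to their versions for a common, time-independent Dirichlet datum, and then pay for the mismatch of boundary values by a term of order $|t_j-t_i|$. Concretely, we transport $y_j$ so that it satisfies the same boundary condition as $y_i$. Set
\begin{align*}
	\tilde y_j \defas h(t_i)\circ h(t_j)^{-1}\circ y_j .
\end{align*}
Then $\tilde y_j = h(t_i)$ on $\Gamma_D$, so $\tilde y_j$ and $y_i$ share the Dirichlet datum $h(t_i)$. Writing $\Phi \defas h(t_i)\circ h(t_j)^{-1}$, the regularity $h,h^{-1}\in W^{1,\infty}(I;W^{2,p})$ together with $p>d$ (so $W^{2,p}\subset C^{1,1-d/p}$) gives
\begin{align*}
	\|\Phi-\id\|_{W^{1,\infty}}\le \tilde C|t_j-t_i| .
\end{align*}
Combined with the uniform $C^1$ bounds on $y_j$ from $\|\nabla y_j\|_{W^{1,p}}\le 1/\mu$, this yields
\begin{align*}
	\|\tilde y_j - y_j\|_{W^{1,2}}+\|\nabla\tilde y_j-\nabla y_j\|_{L^\infty}\le \tilde C|t_j-t_i| .
\end{align*}
We also check that $\tilde y_j$ still satisfies hypotheses of the type $\det\nabla\tilde y_j\ge \mu'$ and $\|\nabla\tilde y_j\|_{W^{1,p}}\le C/\mu$. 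Since $\nabla\tilde y_j=\nabla\Phi(y_j)\nabla y_j$ and $\det\nabla\Phi$ is bounded below by $\mu^2$ (using the lower bounds on $\det\nabla h$ and $\det\nabla h^{-1}$), this holds. In $W^{1,p}$ we use the chain rule and the $W^{2,p}$ bound of $h$, which is fine since $\nabla y_j$ is bounded.

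Next we invoke the fixed-boundary results. For two maps with the same Dirichlet datum and uniform bounds, \cite[Lemma~4.2]{Machillpvisco} provides the Ciarlet–Mardare type estimate
\begin{align*}
	\|y_i-\tilde y_j\|_{W^{1,2}}\le C_\mu\|\nabla y_i^T\nabla y_i-\nabla\tilde y_j^T\nabla\tilde y_j\|_{L^2} .
\end{align*}
Likewise, \cite[Theorem 3.1(i)]{RBMFLM}, a nonlinear Korn estimate of Neff/Pompe type, gives
\begin{align*}
	\|y_i-\tilde y_j\|_{W^{1,2}}\le C_\mu\|\sym(\nabla y_i^T(\nabla y_i-\nabla\tilde y_j))\|_{L^2} .
\end{align*}
These results are stated with a fixed boundary condition, so we must inspect their proofs and track the dependence on the Dirichlet datum. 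Their argument is by contradiction and compactness, via rigidity on $\Gamma_D$ using $\haus^{d-1}(\Gamma_D)>0$. We expect this to go through uniformly for the family of data $\{h(t)\}_{t\in I}$, because this family is compact in $W^{2,p}$ and uniformly orientation preserving. In the contradiction argument one additionally extracts a convergent subsequence of the times.

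Finally we insert $\tilde y_j$ and apply the triangle inequality. For the first estimate,
\begin{align*}
	\|\nabla\tilde y_j^T\nabla\tilde y_j-\nabla y_j^T\nabla y_j\|_{L^2}\le C(\|\nabla\tilde y_j\|_\infty+\|\nabla y_j\|_\infty)\|\nabla\tilde y_j-\nabla y_j\|_{L^2}\le \tilde C|t_j-t_i| .
\end{align*}
For the second, the difference of the symmetrized terms is bounded by $\|\nabla y_i\|_\infty\|\nabla\tilde y_j-\nabla y_j\|_{L^2}$. Both errors are absorbed into the $\tilde C|t_j-t_i|$ term, where we allow $\tilde C$ to depend on $\mu$ through the $L^\infty$ bounds on the gradients. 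Together with $\|y_i-y_j\|\le\|y_i-\tilde y_j\|+\|\tilde y_j-y_j\|$, this gives \eqref{nonlinkornwithtimeboundary} and \eqref{pompewithtimeboundary}.

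The main obstacle is the uniformity of the constant $C_\mu$ with respect to the boundary datum $h(t_i)$ in the fixed-boundary rigidity and Korn estimates. We handle it by the compactness argument above, proving the estimate by contradiction with sequences $t_i^n$, $y_i^n$, $\tilde y_j^n$ and using the strong convergence of the boundary data.
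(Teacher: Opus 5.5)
Your route is viable but differs from the paper's, and the step you leave at ``we expect'' needs two ingredients made explicit.

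\textbf{How the two proofs differ.} The paper never transports $y_j$. It applies rigidity estimates that ignore boundary values. For \eqref{nonlinkornwithtimeboundary} it uses Lemma~\ref{nonlinearKornboundary} (Ciarlet--Mardare), which gives $\nabla y_j\approx Q\nabla y_i$ for some $Q\in SO(d)$. For \eqref{pompewithtimeboundary} it uses an estimate that holds modulo a skew matrix $A$. It then removes $Q-\Id$, resp.\ $A$, with Lemma~\ref{gendalmaso}, uniformly in $t$. The two boundary data enter only through $\int_{\Gamma_D}|h(t_j)-h(t_i)|^2\,{\rm d}\haus^{d-1}$.

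Your transport $\tilde y_j=h(t_i)\circ h(t_j)^{-1}\circ y_j$ is the device the paper uses for the competitor in Proposition~\ref{prop:existence_mechanical_step}. It reduces both estimates to the equal-datum case plus an explicit $O(|t_j-t_i|)$ error. This is more modular. For \eqref{pompewithtimeboundary} it is especially clean: $y_i-\tilde y_j$ has zero trace on $\Gamma_D$, so Pompe's Korn inequality with continuous coefficient $\nabla y_i$ applies directly. You then only need uniformity over a $C^0$-compact family of coefficients. Note that in Step~4 the paper invokes the reference you cite only in a form modulo a skew matrix.

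The price is that you need $h(t)$ to be a bijection of $\R^d$, with $h^{-1}$ bounded in $W^{1,\infty}(I;W^{2,p})$ and $\det\nabla h^{-1}$ bounded below. This follows from the standing assumptions of Section~\ref{sec:themodel} but is not among the theorem's hypotheses; the paper uses only traces of $h$ on $\Gamma_D$. Also, a $W^{2,p}$ bound for $\tilde y_j$ requires controlling $\nabla^2 h$ composed with $h(t_j)^{-1}\circ y_j$ in $L^p$, which is a change of variables and not a consequence of $\nabla y_j$ being bounded. The immediate uniform $C^{1,\alpha}$ bounds already suffice for your compactness argument.

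\textbf{What is missing for \eqref{nonlinkornwithtimeboundary}.} The uniformity in the datum that you only ``expect'' is exactly where the paper does its work, and your contradiction argument needs two ingredients you do not state.

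First, take limits $t^n\to t$, $y_i^n\to y$, $\tilde y_j^n\to\tilde y$ in $W^{1,\infty}$. The limits have equal metrics and common trace $h(t)$ on $\Gamma_D$, hence $\tilde y=Qy+b$. To conclude $Q=\Id$ and $b=0$, you need that no nontrivial rigid motion fixes $h(t)(\Gamma_D)$ pointwise. This holds because $h(t)$ is locally bi-Lipschitz, so $\haus^{d-1}(h(t)(\Gamma_D))>0$ and its affine hull has dimension at least $d-1$. That is the content of Step~1 and Lemma~\ref{gendalmaso}.

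Second, the first point only gives $\delta_n\defas\|y_i^n-\tilde y_j^n\|_{W^{1,2}}\to0$. This contradicts nothing, because the estimate is not homogeneous. You must normalize $u_n\defas(\tilde y_j^n-y_i^n)/\delta_n$. Then discard the quadratic term of the metric difference using $\|\nabla\tilde y_j^n-\nabla y_i^n\|_{L^\infty}\to0$. Finally conclude with Pompe's inequality for $u_n$, which vanishes on $\Gamma_D$. With these two points added the argument closes, and it gives uniformity in $y_i$ and $t_i$ simultaneously.

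\textbf{On the constant $\tilde C$.} The $\mu$-dependence of $\tilde C$ that you allow is unavoidable. Take $h(t)$ to be rotations and $y_k=h(t_k)\circ\phi$ with $\phi=\id$ on $\Gamma_D$. The metrics coincide, while $\|y_i-y_j\|_{W^{1,2}}$ scales like $|t_i-t_j|$ times the size of $\nabla\phi$. The paper's Step~2 incurs the same dependence through the factor $\int_\Omega|\nabla y_i|^2\,{\rm d}x$.
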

Notice that
\eqref{pompewithtimeboundary} boils down to   a  classical Korn's inequality  (with nonconstant Dirichlet boundary conditions) if $\nabla y_i = \Id$,
while \eqref{nonlinkornwithtimeboundary} becomes an estimate in terms of the Green-St-Venant strain tensor.
As shown in \eqref{calculationhelpful}, the right-hand side of \eqref{nonlinkornwithtimeboundary} only differs from the right-hand side in \eqref{pompewithtimeboundary} by a term of lower order.  

  Estimate \eqref{nonlinkornwithtimeboundary} is proved in \cite[Theorem 3(b)]{CiarletMardare} without considering  time-dependent boundary conditions and without showing that  the constant $C_\mu$ can be chosen uniformly among function such that $\det (\nabla y_i ) > \mu$ and $\Vert \nabla y_i \Vert_{  W^{1,p}(\Omega)} \leq 1/\mu$ for some   $i = 1,2$, $\mu>0$.
The proof of \eqref{nonlinkornwithtimeboundary} relies on the following result while we will use \cite[Theorem 3.1 (i) ]{RBMFLM} for \eqref{pompewithtimeboundary}.
\begin{lemma}[Generalized rigidity estimate]\label{nonlinearKornboundary}
	Let $t \in I$ and let $y_2 \in \Wid{t}$ be such that $\det (\nabla y_2 ) \geq \mu >0$. Then there exists a constant $C=C(\Omega, y_2)$ such that for every $y_1 \in W^{2,p}(\Omega;\R^d)$ with $\det(\nabla y_1) \geq \mu >0$ there exists a rotation $R(y_1) \in SO(d)$ such that we have
	\begin{align}\label{ineq:Ciarlet-Mardare1}
		\Vert \nabla y_1 - R(y_1) \nabla y_2 \Vert_{L^2(\Omega)}  \leq C  \Vert  (\nabla y_1)^T \nabla y_1 - (\nabla y_2)^T \nabla y_2 \Vert_{L^2(\Omega)}.
	\end{align}

\end{lemma}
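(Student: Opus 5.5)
The plan is to reduce Lemma~\ref{nonlinearKornboundary} to the Ciarlet--Mardare estimate \cite[Theorem 3(b)]{CiarletMardare}. That result states: for a fixed reference deformation $\Theta \in C^1(\overline\Omega;\R^d)$ with $\det\nabla\Theta>0$ on $\overline\Omega$ and a Lipschitz domain, there is $C=C(\Omega,\Theta)$ such that every $\Phi\in H^1(\Omega;\R^d)$ with $\det\nabla\Phi>0$ a.e.\ satisfies
\[
\inf_{R\in SO(d)}\norm{\nabla\Phi - R\nabla\Theta}_{L^2(\Omega)}\le C\norm{(\nabla\Phi)^T\nabla\Phi-(\nabla\Theta)^T\nabla\Theta}_{L^2(\Omega)}^{1/2}.
\]
We take $\Theta=y_2$. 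By Morrey's embedding ($p>d$) we have $y_2\in C^{1,1-d/p}(\overline\Omega)$, and $\det\nabla y_2\ge\mu$. Hence $y_2$ is an admissible reference map, and the constant depends only on $\Omega$ and $y_2$, which matches the dependence claimed in the statement. The infimum over the compact set $SO(d)$ is attained, and we call the minimizer $R(y_1)$.

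The key difficulty is the exponent: Ciarlet--Mardare in general gives only a square-root dependence on $\norm{C_1-C_2}_{L^1}$ or $L^2$, while \eqref{ineq:Ciarlet-Mardare1} is linear. We would handle this by splitting into two cases with a threshold $\delta>0$.

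\emph{Case of large defect}, $\norm{C_1-C_2}_{L^2}\ge\delta$, where $C_i=(\nabla y_i)^T\nabla y_i$. Pointwise, $\abs{\nabla y_1}^2=\trace C_1\le \trace C_2+\sqrt d\abs{C_1-C_2}$. This gives
\[
\norm{\nabla y_1-R\nabla y_2}_{L^2}\le \norm{\nabla y_1}_{L^2}+\norm{\nabla y_2}_{L^2}\le C(y_2)\bigl(1+\norm{C_1-C_2}_{L^2}^{1/2}\bigr),
\]
which is bounded by $C(y_2)\delta^{-1}\norm{C_1-C_2}_{L^2}$ since the defect is at least $\delta$. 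This is the place to check the case $\delta<1$, where $x^{1/2}\le \delta^{-1/2}x$ for $x\ge\delta$ is needed.

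\emph{Case of small defect}, $\norm{C_1-C_2}_{L^2}<\delta$. Here we would use the linear version: Ciarlet--Mardare also prove a linear estimate $\norm{\nabla\Phi-R\nabla\Theta}_{L^p}\le C\norm{C_\Phi-C_\Theta}_{L^p}$ in the regime where the metrics are close. If we prefer not to rely on it, we change variables through $y_2$. Since $y_2$ is a $C^1$ local diffeomorphism with $\det\ge\mu$, set $u=y_1\circ y_2^{-1}$ locally. Then $\nabla y_1=(\nabla u\circ y_2)\nabla y_2$, so $C_1-C_2=(\nabla y_2)^T\bigl((\nabla u)^T\nabla u-\Id\bigr)\circ y_2\,\nabla y_2$. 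The bounds on $\nabla y_2$ and its inverse give $\abs{(\nabla u)^T\nabla u-\Id}\le C\abs{C_1-C_2}$. Pointwise polar decomposition, $\operatorname{dist}(F,SO(d))=\abs{\sqrt{F^TF}-\Id}\le\abs{F^TF-\Id}$ (by \eqref{ineq:matrixcalc} with $D=\Id$), followed by the Friesecke--James--Müller rigidity estimate \cite{FrieseckeJamesMueller:02} for $u$, yields a linear bound $\norm{\nabla u-R}_{L^2}\le C\norm{(\nabla u)^T\nabla u-\Id}_{L^2}$. Transforming back gives \eqref{ineq:Ciarlet-Mardare1}.

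The main obstacle is the change of variables, because $y_2$ need not be globally injective on $\Omega$. FJM applies on the connected Lipschitz image domain, so we would cover $\overline\Omega$ by finitely many balls on which $y_2$ is a bi-Lipschitz diffeomorphism onto a Lipschitz set. On each ball this produces a rotation $R_j$. The rotations are then glued through overlapping balls, using connectedness of $\Omega$ and the triangle inequality: $\abs{R_j-R_k}\abs{B_j\cap B_k}^{1/2}\le C\norm{C_1-C_2}_{L^2}$. This chaining is the step where the argument of Ciarlet--Mardare is essentially reproduced. All constants depend only on $y_2$ through its $C^1$ norm and $\mu$, as claimed.
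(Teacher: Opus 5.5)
Your argument is correct in substance, but it takes a different route from the paper. The paper gives no argument of its own: it cites \cite[Theorem~1(b)]{CiarletMardare} and takes that to supply the linear $L^2$ bound directly. Your initial recollection of the cited result as yielding only a square-root bound does not match how the paper uses it. Your route instead reproves the estimate from the Friesecke--James--M\"uller theorem:
\begin{itemize}
\item you invert $y_2$ locally;
\item you apply rigidity to $u=y_1\circ y_2^{-1}$ on each piece, using $\operatorname{dist}(F,SO(d))=|\sqrt{F^TF}-\Id|\le|F^TF-\Id|$ for $\det F>0$;
\item you glue the local rotations via $|R_j-R_k|\,|B_j\cap B_k|^{1/2}\le C\|C_1-C_2\|_{L^2}$ and connectedness of $\Omega$.
\end{itemize}
This is essentially the Ciarlet--Mardare argument, and it makes the lemma self-contained.

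Your large/small-defect split is superfluous, however. Both the local rigidity bound and the chaining estimate are linear in $\|C_1-C_2\|_{L^2}$ with no smallness assumption, so your second case already proves the lemma outright. You can drop the trivial large-defect bound, and also the vague appeal to a ``linear version in the regime where the metrics are close.''

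Even if you start from a version of Ciarlet--Mardare with $\|\sqrt{C_1}-\sqrt{C_2}\|_{L^2}$ on the right-hand side, no split is needed. The paper's own \eqref{ineq:matrixcalc} with $D=\sqrt{C_2}$ gives pointwise $|\sqrt{C_1}-\sqrt{C_2}|\le\alpha^{-1}|C_1-C_2|$, where the smallest eigenvalue $\alpha$ of $\sqrt{C_2}$ satisfies $\alpha\ge\mu/\|\nabla y_2\|_{L^\infty}^{d-1}$.

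Two technical points in your route need care.
\begin{itemize}
\item Near $\partial\Omega$, use Lipschitz-graph cylinders rather than balls, and first extend $y_2$ to a neighborhood of $\overline\Omega$ with $\det\nabla y_2>0$. Then each piece and its $C^1$-diffeomorphic image are connected Lipschitz domains, to which Friesecke--James--M\"uller applies. For balls at boundary points, $B\cap\Omega$ need not be Lipschitz.
\item Your claim that the constants depend on $y_2$ only through its $C^1$ norm and $\mu$ is overstated. The covering radius depends on the modulus of continuity of $\nabla y_2$, and the rigidity constants depend on the geometry of the image pieces.
\end{itemize}
Since the lemma only asserts $C=C(\Omega,y_2)$, neither point affects correctness.

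What your approach buys over the bare citation is transparency about how $C$ depends on $y_2$. Made quantitative through the $C^{1,1-d/p}$ bound, it could in principle give directly the uniformity over $W_\mu$. The paper obtains that uniformity separately, by the continuity argument in Step~3 of the proof of Theorem~\ref{coercivitystrainrates}.
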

\begin{proof}
	The statement is proved in \cite[Theorem~1(b)]{CiarletMardare}.
\end{proof}
The general idea for the proof of \eqref{nonlinkornwithtimeboundary} is to replace the rotation in \eqref{ineq:Ciarlet-Mardare1} with $\Id$ by using the boundary conditions. Then we can estimate the right-hand side of  \eqref{ineq:Ciarlet-Mardare1} by the right-hand side of \eqref{nonlinkornwithtimeboundary}.
To this end, we first show a generalization of  \cite[Lemma 3.3]{MaNePe} to control the norm of matrices by boundary integrals.
We define $S_0 \subset \Gamma_D$ as the set of points $x \in \Gamma_D$ such that $\mathcal{H}^{d-1}(\Gamma_D \cap B_\eps(x) ) >0$ for all $\eps >0$.
\begin{lemma}\label{gendalmaso}
	Let $\Gamma_D \subset \R^d$ be a bounded $\mathcal{H}^{d-1}$-measurable set with $0 < \mathcal{H}^{d-1}(\Gamma_D) < + \infty$. Moreover, let $h \in W^{1,\infty}(I; W^{2,p}(\Omega;\R^d))$ be such that
	${\rm dim} ({\rm aff} (h(t)(S_0) )  ) \geq d-1 $ for all $t \in I$, where ${\rm aff} (h(t)(S_0) )  $ is the affine space generated by linear combinations of points in $h(t)(S_0)$.
	Then there exists a constant $C= C(\Omega, \Gamma_D, h) >0$ such that it holds that
	\begin{align}\label{integralestimate}
		\vert A \vert^2 \leq C \min\limits_{\zeta \in \R^d}\int_{\Gamma_D} \vert A h(t) - \zeta \vert^2 \, {\rm d}\mathcal{H}^{d-1} \quad \text{for any } A \in K \text{ and } t \in I.
	\end{align}
	where $K \subset \R^{d \times d}$ is a closed cone satisfying $ {\rm dim } ( {\rm ker} (A)  )< d-1$ for $0 \neq A \in K$.
\end{lemma}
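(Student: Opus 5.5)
Both sides of \eqref{integralestimate} are positively $2$-homogeneous in $A$, and $K$ is a closed cone. So it suffices to prove
\[
m \defas \inf\Big\{ \min_{\zeta\in\R^d}\int_{\Gamma_D}|Ah(t)-\zeta|^2\,\di\haus^{d-1} : A\in K,\ |A|=1,\ t\in I\Big\} > 0,
\]
and then take $C = 1/m$. I will argue by compactness and contradiction, in the spirit of \cite[Lemma 3.3]{MaNePe}, with the extra parameter $t$ handled through the continuity of $h$ in time.

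First, I need the functional $\Phi(A,t) \defas \min_\zeta \int_{\Gamma_D}|Ah(t)-\zeta|^2\,\di\haus^{d-1}$ to be well defined and continuous on $(K\cap\{|A|=1\})\times I$.
\begin{itemize}
\item Since $p>d$ and $h\in W^{1,\infty}(I;W^{2,p})$, Sobolev embedding shows that $h \in C(I;C^1(\overline\Omega;\R^d))$, in particular $h$ is Lipschitz from $I$ into $C(\overline\Omega;\R^d)$. Hence the traces $h(t)|_{\Gamma_D}$ are bounded and continuous in $t$ uniformly on $\Gamma_D$.
\item The inner minimum is attained at the mean $\zeta = \haus^{d-1}(\Gamma_D)^{-1}\int_{\Gamma_D}Ah(t)\,\di\haus^{d-1}$, so $\Phi(A,t)$ equals the variance of $Ah(t)$ over $\Gamma_D$.
\item The variance is jointly continuous in $(A,t)$ by the uniform convergence above and $\haus^{d-1}(\Gamma_D)<\infty$.
\end{itemize}
The set $(K\cap\{|A|=1\})\times I$ is compact, because $K$ is closed and $I$ is compact. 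Therefore, if $m=0$, there exist $A\in K$ with $|A|=1$ and $t\in I$ such that $\Phi(A,t)=0$.

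Next I derive a contradiction from $\Phi(A,t)=0$.
\begin{itemize}
\item $\Phi(A,t)=0$ means $Ah(t)(x)=\zeta$ for $\haus^{d-1}$-a.e. $x\in\Gamma_D$.
\item By the definition of $S_0$, every point of $S_0$ is a limit of points of $\Gamma_D$ at which this identity holds: every ball around $x\in S_0$ meets $\Gamma_D$ in positive measure, hence contains such points. By continuity of $h(t)$, $Ah(t)=\zeta$ on all of $S_0$.
\item Consequently $A(u-v)=0$ for all $u,v\in h(t)(S_0)$. So $\ker A$ contains the linear space of directions of $\mathrm{aff}(h(t)(S_0))$, which has dimension at least $d-1$ by assumption.
\item Thus $\dim\ker A\ge d-1$ with $A\neq 0$, contradicting the defining property of the cone $K$.
\end{itemize}
Hence $m>0$, and by homogeneity $|A|^2\le m^{-1}\Phi(A,t)$ for all $A\in K$ and $t\in I$.

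The main obstacle I expect is the continuity and compactness step. I must make sure the trace of $h(t)$ on $\Gamma_D$ makes sense pointwise, so that the passage from "a.e. on $\Gamma_D$" to "everywhere on $S_0$" is legitimate. I must also check that the infimum over $t$ is attained rather than degenerating. Both points rest on the embedding $W^{2,p}\hookrightarrow C^1$ for $p>d$ together with the Lipschitz dependence of $h$ on $t$, which give uniform convergence $h(t_n)\to h(t)$ on $\overline\Omega$. The rest is the standard Dal Maso-type argument, with the dimension condition replacing the usual non-collinearity hypothesis.
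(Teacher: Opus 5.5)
Your proof is correct and follows the paper's compactness-and-contradiction argument adapted from \cite{MaNePe}: a zero-variance limit forces $Ah(t)$ to be constant on $S_0$, hence on $\mathrm{aff}(h(t)(S_0))$, so $\dim\ker A\ge d-1$ for some $0\neq A\in K$. The only difference is organizational. You run the compactness jointly on $(K\cap\{|A|=1\})\times I$, using continuity of $h$ into $C(\overline\Omega)$. The paper instead fixes $t$ first, then shows the optimal constant $C_t$ depends continuously on $t$ and applies the extreme value theorem. Your version is slightly more direct, and it spells out the passage from ``a.e.\ on $\Gamma_D$'' to ``everywhere on $S_0$'', which the paper only asserts.
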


\begin{proof}

	The proof consists of two parts. We first show the estimate for a fixed $t \in I$ (Step 1). Then, we show that the constant $C>0$ can be chosen to be independent of $t$ (Step 2).  

	\emph{Step 1:} Fixing $t\in I$, we  follow the lines of \cite[Lemma~3.3]{MaNePe}. By elementary arguments one can check that the minimizer $\zeta$ in \eqref{integralestimate} is attained for $\zeta = \fint_{\Gamma_D} A h(t) \, {\rm d} \mathcal{H}^{d-1}$. We argue by contradiction. Suppose that for every $k \in \N$ there exists a matrix $A_k\in K$ such that
	\begin{align*}
		\frac{\vert A_k \vert^2}{k} >  \int_{\Gamma_D} \vert A_k h(t) - \zeta_k \vert^2 \, {\rm d}\mathcal{H}^{d-1} \qquad \text{ for } \qquad \zeta_k =  \fint_{\Gamma_D} A_k h(t) \, {\rm d} \mathcal{H}^{d-1}.
	\end{align*}
	As the sequence $(A_k)_k$ lies in a cone, we can assume that $\vert A_k \vert = 1$ by normalization. Since the cone is closed, by passing to a subsequence we find $A \in K$ with $\vert A \vert = 1$ such that $A_k \to A$ as $k \to \infty$ and $\int_{\Gamma_D} \vert A h(t) - \zeta \vert^2 \, {\rm d}\mathcal{H}^{d-1} = 0$ for $\zeta =  \fint_{\Gamma_D}  A h(t) \, {\rm d} \mathcal{H}^{d-1}$.
	In particular, $A h(t)(x) = const$ for all $x \in S_0$. By linearity this implies that $Ay = const$ for all $y \in {\rm aff}(h(t)(S_0))$.
	  Therefore,  ${\rm dim }  ( {\rm ker} (A)  )\geq d-1$. Due to the assumption on the cone, we have $A = 0$, which is impossible as $\vert A \vert = 1$.

	\emph{Step 2:}
	By normalization, Step 1 shows the existence of a minimal constant $C_t = C_t(h,\Omega,\Gamma_D)>0$ such that
	\begin{align}\label{eq:minimalconstant}
		1  \leq C_t  \int_{\Gamma_D} \Big| B h(t) - \fint_{\Gamma_D} B h(t) \, {\rm d} \mathcal{H}^{d-1}   \Big|^2 \, {\rm d}\mathcal{H}^{d-1}
	\end{align}
	for all $B = \frac{A}{\vert A \vert}$ with $0 \neq A \in K$.
	We show that $t \mapsto C_t$ attains its maximum on the interval $I$.
	Given $\delta >0$ with $t_1,t_2 \in I$ and $\vert t_1 - t_2 \vert < \delta$, the reverse triangle inequality,   $\vert B \vert \leq 1$, a trace estimate, and the Lipschitz regularity of $h$ imply that
	\begin{align*}
		 & \quad \ \left\vert  \int_{\Gamma_D} \vert B h(t_1) - \fint_{\Gamma_D} B h(t_1) \, {\rm d} \mathcal{H}^{d-1}   \vert^2 \, {\rm d}\mathcal{H}^{d-1} - \vert B h(t_2) - \fint_{\Gamma_D} B h(t_2) \, {\rm d} \mathcal{H}^{d-1}   \vert^2 \, {\rm d}\mathcal{H}^{d-1}  \right\vert \\
		 & \leq  C \Vert h \Vert_{L^\infty(I; L^\infty(\Omega;\R^d))}  \Vert h(t_1) - h(t_2) \Vert_{W^{1,1}(\Omega;\R^d)} \leq  C \delta \Vert h \Vert_{L^\infty(I; L^\infty(\Omega;\R^d))}  \Vert \partial_t h \Vert_{L^\infty(I; W^{2,p}(\Omega;\R^d))}.
	\end{align*}
	This shows the continuity of $t \mapsto C_t$ as $C_t$ is the minimal constant in \eqref{eq:minimalconstant}.
	As $I$ is a compact interval, the maximum is attained by the extreme value theorem, concluding the proof.  
\end{proof}

\begin{proof}[Proof of Theorem~\ref{coercivitystrainrates}]
	The proof is divided into four steps. In the first two steps, we prove \eqref{nonlinkornwithtimeboundary}.
	Step 1 adapts the abstract Lemma~\ref{gendalmaso} to our setting, arguing similarly to \cite[(3.14)]{MaNePe},
	while the estimate is shown in Step 2, up to the fact that the constant $C_\mu>0$ is uniform
	among functions in the set
	\begin{align}\label{def:weierstrassextremum}
		W_\mu \defas \Big\{ y_0 \in W^{2,p}(\Omega; \R^d) \colon \, \min\limits_{x \in \Omega} \det (\nabla y_0  (x)) \geq \mu, \ \Vert y_0 \Vert_{W^{2,p}(\Omega)} \leq \mu^{-1} \Big\}.
	\end{align}
	This  is shown in Step 3. Finally, Step 4 addresses \eqref{pompewithtimeboundary}.

	\emph{Step 1:}
	Let $K$ be the closed cone generated by $SO(d) - \Id$. Using the surjective exponential map $\exp\colon \R^{d \times d}_{\rm skew} \to SO(d)$, one can check that $K = {\rm cone}( SO(d) - \Id) \cup \R^{d \times d}_{\rm skew}$. Notice that for $0 \neq A \in \R^{d \times d}_{\rm skew}$ we have that ${\rm dim}({\rm ker} (A)) = d - {\rm rank}(A) \leq d-2 < d-1$. Therefore,
	${\rm dim}({\rm ker}(A))<d-1$ if $0 \neq A \in K$.
	In order to apply Lemma~\ref{gendalmaso}, it remains to check that
	${\rm dim}({\rm aff}\big(h(t)(S_0)\big) ) \geq d-1 $.
	For this purpose, let use check that the points $h(t)(S_0)$ have positive density, i.e., for all $y \in h(t)(S_0)$ it holds that
	\begin{align}\label{5X}
		\mathcal{H}^{d-1} (h(t)(S_0)  \cap B_\eps(y) ) > 0 \quad   \text{for all } \eps >0.
	\end{align}
	  As $\partial \Omega$ is Lipschitz and  $ h(t) \in W^{2,p}(  \Omega;\R^d)$ with $\det(\nabla   h) \geq \mu$, we observe that $h(t)(S_0)  \cap B_\eps(y)$ can be represented as a graph of a Lipschitz function. This along with $S_0 \subset \partial \Omega$ and $\mathcal{H}^{d-1}(S_0 \cap B_\eps(x) ) >0$ for all $\eps >0$ and $x \in S_0$ shows \eqref{5X}.  
	  Thus, Lemma~\ref{gendalmaso} implies that
	\begin{align}\label{ineq:boundary20}
		\vert Q - \Id \vert^2 \leq C \int_{\Gamma_D} \vert (Q - \Id) h(t_i) - \xi \vert ^2 \, {\rm d} \mathcal{H}^{d-1}  \quad  \text{for all } Q \in SO(d)  
	\end{align}
	for a constant $C = C(\Omega, \Gamma_D, h) >0$, where we choose $\xi = \vert \Omega \vert^{-1} \int_\Omega Qy_i - y_j \di x$.

	\emph{Step 2:}
	\eqref{ineq:Ciarlet-Mardare1} shows that there exists a rotation $  Q \in SO(d)$ such that we have
	\begin{align}\label{ineq:Ciarlet-Mardareproof}
		\Vert \nabla y_j - Q \nabla y_i \Vert_{L^2(\Omega)}  \leq C  \Vert  (\nabla y_j)^T \nabla y_j - (\nabla y_i)^T \nabla y_i \Vert_{L^2(\Omega)},
	\end{align}
	where $C>0$ depends on $y_i$.
	Using the boundary conditions on $\Gamma_D$, a trace estimate, Poincaré-Wirtinger inequality, \eqref{ineq:Ciarlet-Mardareproof}, and the regularity of $h$, we derive that
	\begin{align}
		\int_{\Gamma_D} \vert (Q-\Id)h(t_i) - \xi \vert ^2 \, {\rm d} \mathcal{H}^{d-1} & = \int_{\Gamma_D} \vert y_i - y_j + (Q-\Id)y_i - \xi + h(t_j) - h(t_i)\vert ^2 \, {\rm d} \mathcal{H}^{d-1} \notag                                                                     \\
		                                                                                & \leq C \int_{\Gamma_D} \vert  Q y_i - y_j  - \xi \vert ^2 \, {\rm d} \mathcal{H}^{d-1} + C \int_{\Gamma_D} \vert  h(t_j) -h(t_i)\vert ^2 \, {\rm d} \mathcal{H}^{d-1} \notag \\
		                                                                                & \leq C \int_{\Omega} \vert \nabla y_j - Q\nabla y_i  \vert ^2 \di x +  C  \int_{\Gamma_D} \vert  h(t_j) -h(t_i)\vert ^2 \, {\rm d} \mathcal{H}^{d-1} \notag                  \\
		                                                                                & \leq C \Vert  (\nabla y_j)^T \nabla y_j - (\nabla y_i)^T \nabla y_i \Vert_{L^2(\Omega)}^2 + \tilde C \vert t_j -t_i \vert^2. \label{ineq:proofboundary3}
	\end{align}
	Eventually, the triangle inequality,  \eqref{ineq:boundary20}, \eqref{ineq:Ciarlet-Mardareproof},   \eqref{ineq:proofboundary3},    and   $\Vert \nabla y_i \Vert_{L^\infty(\Omega)} \leq 1/\mu$ imply that
	\begin{align*}
		\int_\Omega \vert \nabla y_j - \nabla y_i \vert^2 \di x & \leq C \int_\Omega \vert \nabla y_j - Q \nabla  y_i \vert^2 \di x  +   C  \vert Q - \Id \vert^2 \int_\Omega \vert \nabla y_i \vert^2 \di x \\
		                                                        & \leq C \Vert  (\nabla y_j)^T \nabla y_j - (\nabla y_i)^T \nabla y_i \Vert_{L^2(\Omega)}^2 + \tilde C \vert t_j - t_i \vert^2.
	\end{align*}
	  This estimate along with Poincaré's inequality on $\Vert y_j - y_i - (h(t_i) - h(t_j)) \Vert _{L^2(\Omega)}$, the triangle inequality, and the regularity of $h$ yields estimate \eqref{nonlinkornwithtimeboundary}, where, for the moment, the constant $C$ still depends on $y_i$. We denote the minimal constant such that \eqref{nonlinkornwithtimeboundary} holds for fixed $y_i$ (and arbitrary $y_j \in W_\mu$) by $C_{y_i}>0$.

	\emph{Step 3:}
	We consider the compact space $(W_\mu,\Vert   \cdot  \Vert_{W^{1,\infty}(\Omega)})$ (see  \eqref{def:weierstrassextremum}) and show that   the mapping
	\begin{align}\label{eq:mapping}
		W_\mu \to \R_+, \quad y_i \mapsto C_{y_i}
	\end{align}
	is continuous.  Then, the extreme value theorem implies that the mapping in \eqref{eq:mapping} attains its maximum. (A similar argument has been employed in \cite[Theorem 3.3]{MielkeRoubicek20Thermoviscoelasticity}.)
	To show the continuity, consider $\tilde w , \hat w \in W_\mu $ such that $  \Vert  \hat w -  \tilde w \Vert_{W^{1,\infty}(\Omega)} < \delta$.
	  For arbitrary $y_j \in W_\mu$, the reverse triangle inequality shows that
	\begin{align*}
		\left\vert \Vert  y_j -  \tilde w \Vert_{W^{1,2}(\Omega)} -  \Vert y_j -  \hat w \Vert_{W^{1,2}(\Omega)} \right\vert \leq C \delta,
	\end{align*}
	as well as
	\begin{align*}
		 & \left\vert \Vert (\nabla y_j)^T \nabla y_j - (\nabla \tilde w)^T \nabla \tilde w \Vert_{L^2(\Omega)} -  \Vert (\nabla y_j)^T \nabla y_j - (\nabla \hat w)^T \nabla \hat w\Vert_{L^2(\Omega)}  \right\vert \\ &  \hspace{1.5cm} \leq  \Vert  (\nabla \tilde w)^T \nabla \tilde w - (\nabla \hat w)^T \nabla \hat w  \Vert_{L^2(\Omega)}  \\
		 & \hspace{1.5cm} = \Vert  (\nabla \tilde w - \nabla \hat w)^T \nabla \tilde w - (\nabla \hat w)^T ( \nabla \hat w - \nabla \tilde w ) \Vert_{L^2(\Omega)}    \leq C \mu^{-1}  \delta.
	\end{align*}
	This shows the continuity of the mapping in \eqref{eq:mapping}.  

	\emph{Step 4:}
	The proof of \eqref{pompewithtimeboundary} is similar: \cite[Theorem 3.1  (i) ]{RBMFLM} implies that there exists a constant matrix $A \in \R^{d \times d}_{\rm skew}$ such that
	\begin{align}\label{ineq:pompenew}
		\Vert (\nabla y_i -  \nabla y_j) - A \nabla y_i \Vert_{L^2(\Omega)}  \leq C   \Vert \sym \big( (\nabla y_i ) ^T (\nabla y_i - \nabla y_j) \big) \Vert_{L^2(\Omega)}.
	\end{align}
	(Note that this result is only proved in dimension $3$ as it was proved  in the context of a dimension-reduction problem,   but the arguments also hold in arbitrary dimensions.)
	Lemma~\ref{gendalmaso} (as before, $K$  is the closed cone generated by $SO(d) - \Id$) implies
	\begin{align}\label{ineq:boundary2}
		\vert A \vert^2 \leq C \int_{\Gamma_D} \vert A h(t_i) - \xi \vert ^2 \, {\rm d} \mathcal{H}^{d-1},
	\end{align}
	where we choose $\xi \defas \vert \Omega \vert^{-1} \int_\Omega (y_j -   y_i + A y_i) \di x$. Using the boundary conditions on $\Gamma_D$, a trace estimate, Poincaré-Wirtinger inequality, \eqref{ineq:pompenew}, and the regularity of $h$, we derive that
	\begin{align*}
		\int_{\Gamma_D} \vert A h(t_i) - \xi \vert ^2 \, {\rm d} \mathcal{H}^{d-1} & = \int_{\Gamma_D} \vert y_j - y_i + Ay_i - \xi - h(t_j) + h(t_i)\vert ^2 \, {\rm d} \mathcal{H}^{d-1}                                                                                 \\
		                                                                           & \leq C \int_{\Gamma_D} \vert y_j - y_i + Ay_i  - \xi \vert ^2 \, {\rm d} \mathcal{H}^{d-1} +  C  \int_{\Gamma_D} \vert  h(t_j) -h(t_i)\vert ^2 \, {\rm d} \mathcal{H}^{d-1} \\
		                                                                           & \leq C \int_{\Omega} \vert \nabla y_j - \nabla y_i + A \nabla y_i   \vert ^2 \di x + C  \int_{\Gamma_D} \vert  h(t_j) -h(t_i)\vert ^2 \, {\rm d} \mathcal{H}^{d-1}          \\
		                                                                           & \leq C \Vert \sym \big( (\nabla y_i ) ^T (\nabla y_i - \nabla y_j) \big) \Vert_{L^2(\Omega)}^2 + C \vert t_j -t_i \vert^2.
	\end{align*}
	  Now, repeating the argument at the end of Step 2,  Poincar\'e's inequality, the triangle inequality, \eqref{ineq:pompenew}, \eqref{ineq:boundary2},  $\Vert \nabla y_i \Vert_{L^\infty(\Omega)} \leq 1/\mu$, and the previous estimate  yield \eqref{pompewithtimeboundary}.
	An argument similarly to Step 3 shows that the constant can be chosen uniformly among functions $y_i \in W_\mu$, see \eqref{def:weierstrassextremum}.  
\end{proof}

\subsection{Discrete version of Gronwall's lemma}
We will need the following  Gronwall-type lemma.
\begin{lemma}[Discrete version of Gronwall's inequality]\label{lem:discretegronwall}
	Let $(a_l)_{l \ge 0}$ be a sequence, $\beta >0$ and $(b_l)_{l \ge 0} \geq 0$ such that $a_0 \le \beta$ and  
	\begin{align*}
		a_k \leq \beta + \sum_{l = 0}^{k-1} b_l a_l \quad \text{ for } k \geq 1.
	\end{align*}
	Then we have
	\begin{align*}
		a_k \leq 2\beta \exp \left( \sum_{l=0}^{k-1} 2b_l\right) \quad \text{ for } k \geq 0.
	\end{align*}
\end{lemma}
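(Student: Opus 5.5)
The plan is to compare $a_k$ with the partial sums of the right-hand side. This gives the stronger bound $a_k \le \beta \exp\bigl(\sum_{l=0}^{k-1} b_l\bigr)$, and the stated estimate with the constants $2$ then follows trivially. For $k \ge 0$ we introduce the auxiliary quantities
\begin{align*}
	S_k \defas \beta + \sum_{l=0}^{k-1} b_l a_l ,
\end{align*}
so that $S_0 = \beta$. The hypotheses say exactly that $a_k \le S_k$ for all $k \ge 0$: for $k = 0$ this is the assumption $a_0 \le \beta$, and for $k \ge 1$ it is the assumed inequality.

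The key step is a one-step recursion for $S_k$. Since $b_k \ge 0$ and $a_k \le S_k$, we have $b_k a_k \le b_k S_k$, and therefore
\begin{align*}
	S_{k+1} = S_k + b_k a_k \le (1 + b_k) S_k .
\end{align*}
This uses only $b_k \ge 0$ and requires no sign information on $a_k$ or $S_k$. The sequence $(a_l)$ is not assumed nonnegative, and this is the only point where one must be slightly careful: we multiply the inequality $a_k \le S_k$ by the nonnegative number $b_k$, never by $a_k$.

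Next we show $S_k \le \beta \prod_{l=0}^{k-1}(1+b_l)$ by induction on $k$. The case $k = 0$ is trivial. In the inductive step we multiply the induction hypothesis by $1 + b_k > 0$, which preserves the inequality, and combine it with the recursion above. Using $1 + x \le e^{x}$ and $\beta > 0$, we obtain for all $k \ge 0$
\begin{align*}
	a_k \le S_k \le \beta \exp\Big( \sum_{l=0}^{k-1} b_l \Big) \le 2\beta \exp\Big( \sum_{l=0}^{k-1} 2 b_l \Big).
\end{align*}
The last inequality holds because $\beta > 0$ and $b_l \ge 0$, which concludes the proof. I expect no real obstacle here. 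The only subtle point is the sign issue noted above, and it is handled by working with $S_k$ rather than with $a_k$ directly.
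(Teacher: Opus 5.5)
Your proof is correct, but it argues differently from the paper. The paper does not prove the recursion itself. It observes that, since $b_l \ge 0$, the truncated sequence $a_l \vee 0$ satisfies the same hypotheses: the right-hand side $\beta + \sum_{l=0}^{k-1} b_l (a_l \vee 0)$ is positive and dominates both $a_k$ and $0$. It then invokes the standard discrete Gronwall inequality for nonnegative sequences from the literature, and the constants $2$ in the statement are inherited from that external lemma.

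You instead carry out the comparison with the partial sums $S_k$ directly. Since you only ever multiply inequalities by $b_k \ge 0$ or by $1+b_k > 0$, you never need the sign of $a_k$ or $S_k$. Your argument is essentially the standard proof of the nonnegative case, and it shows that nonnegativity is never used, so the truncation step becomes unnecessary.

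Your route gives a self-contained proof and the sharper bound $a_k \le \beta \exp\bigl(\sum_{l=0}^{k-1} b_l\bigr)$, which shows the factors $2$ are superfluous. The paper's route is shorter, at the cost of relying on a cited result. As the remark after the lemma notes, both arguments control only the positive part of $a_k$.
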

\begin{proof}
	The proof is standard if $a_l \geq 0$ for all $l \in \N$, see e.g.\ \cite[Lemma~2.11]{CesikStability}.
	The estimate still holds if $(a_l)_l$ attains values smaller than $0$. Indeed, as $(b_l)_l \geq 0$, it holds that
	\begin{align*}
		a_k \leq \beta + \sum_{l = 0}^{k-1}  b_l ( a_l \vee 0 ) \text{ for } k \geq 1.
	\end{align*}
	As the right-hand side is positive, the estimate remains true if we replace $a_k$ by $a_k \vee 0$ on the left-hand side.
	By using Gronwall's inequality for nonnegative coefficients, we conclude the statement.
\end{proof}

Notice that Lemma~\ref{lem:discretegronwall} only controls the positive part of $a_k$, i.e., there is no bound on $(-a_k \vee 0)_{k}$.

\section{Existence of time-discrete solutions}
In this section, we show that for   $\tau \in (0, \tau_0]$  the staggered time-discretization scheme   introduced in \eqref{mechanical_step}--\eqref{thermal_step}     is well-defined. As mentioned before, without further notice, we focus on the option \ref{Case2-newDissipation} and \eqref{htaucase2}. For convenience, we write
\begin{equation}\label{forces_mech_step}
	\langle \lst{k}, y \rangle
	\defas \int_\Omega \fst k \cdot y \di x
	+ \int_{\Gamma_N} \gst k \cdot y \di \haus^{d-1},
\end{equation}
where we recall that $\fst k  = \tau^{-1}   \int_{(k-1)\tau}^{k\tau} f(t) \di t$  and   $g^{(k)}_\tau =  \tau^{-1}  \int_{(k-1)\tau}^{k\tau} g(t) \di t$ and $k \in \{1, \ldots,  T / \tau\}$.
Further, the initial steps
$\yst 0 = y_{0} \in \Wid{0}$ and $\tst 0 = \theta_{0} \in L^2_+(\Omega)$   with $y_{0}$ and $\theta_{0}$ were defined in \eqref{initial_cond}.
Moreover,   recall that $f \in W^{1, 1}(I; L^2(\Omega; \R^d))$, $g \in W^{1, 1}(I; L^2(\Gamma_N; \R^d))$, $\theta_\flat \in   W^{  1,1  }  (I; L^2_+(\Gamma))$. We also recall the definition   of the total free energy potential in \eqref{eq: free energy}.  
We first investigate the existence of the $k$-th mechanical step.
\begin{proposition}[Mechanical step]\label{prop:existence_mechanical_step}
	Suppose that we have already constructed $ \yst{l} \in \Wid{l\tau}$ and $   \tst{l} \in L^2_+(\Omega)$  for $l =0, \dots, k-1$ for some $k \in   \{1, \ldots,  T / \tau \} $.
	Then, there exists $\tau_0 \in (0, 1]$ such that for all $\tau \in (0, \tau_0)$, the minimization problem \eqref{mechanical_step} is well-posed, i.e.,
	\begin{equation}\label{mechanical_step2}
		\min_{y \in \Wid{k\tau}} \Big\{
		\mechen(y) + \cplen\big(y, \tst{k-1})
		+ \frac{1}{\tau} \diss(\yst{k-1}, y , \tst{k-1})
		-     \langle \lst k , y \rangle
		\Big\}
	\end{equation}
	attains a solution.
	Furthermore, such a minimizer $\yst{k}$ solves the corresponding Euler-Lagrange equation, i.e.,  for all $z \in \Wzero  \defas  \{ z \in W^{2,p}(\Omega;\R^d) : z = 0 \ \text{on} \ \Gamma_D \}$     it holds  that
	\begin{align}\label{mechanical_step_single}
		 & \quad \ - \frac{1}{\tau}     \int_\Omega V(  C^{(k-1)}_\tau, \tst{k-1}) \Big[(  \nabla z)^T  \nabla \yst{k} +   (\nabla \yst{k})^T\nabla z ,  C^{(k)}_\tau   - C^{(k-1)}_\tau \Big] \di x \notag \\
		 & = \int_\Omega
		\pl_F \felpot(\nabla \yst{k}, \tst{k-1})   : \nabla z
		+ \pl_G \hypot(\nabla^2 \yst{k}) \cdddot \nabla^2 z \di x
		-     \langle \ell^{(k)}_\tau , z \rangle,
	\end{align}
	where we set  $  C^{(i)}_\tau  \defas (\nabla \yst{i})^T \nabla \yst{i}$ for $i = k-1,k$.
\end{proposition}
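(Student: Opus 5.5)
We argue by the direct method of the calculus of variations on $\Wid{k\tau}$, and afterwards derive the Euler--Lagrange equation by variations that stay in $\Wid{k\tau}$.

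\emph{Step 1: a competitor and a lower bound.} A natural competitor is $\tilde y \defas h(k\tau)\circ h((k-1)\tau)^{-1}\circ \yst{k-1}$. It belongs to $W^{2,p}$ (chain rule, $p>d$, and the regularity of $h,h^{-1}$), has positive determinant, and equals $h(k\tau)$ on $\Gamma_D$. Hence $\Wid{k\tau}\neq\emptyset$, and the infimum is finite, controlled by $\mechen(\yst{k-1})$, by $\cplen$, and by $\tau^{-1}\diss(\yst{k-1},\tilde y,\tst{k-1})$. The latter is bounded because $|\nabla\tilde y-\nabla\yst{k-1}|\le C\tau$. For the lower bound:
\begin{itemize}
\item $\diss\ge0$.
\item By \ref{C_zero_temperature} and \ref{C_lipschitz}, $\cplpot(F,\theta)$ is not immediately controlled in $F$ alone. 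Using \eqref{C_locally_lipschitz} together with \ref{C_zero_temperature}, one gets $|\cplpot(F,\theta)-\cplpot(\Id,\theta)|\le C(1+|F|^2)$. Concavity in $\theta$ with $-\theta\partial_\theta^2\cplpot\le\aC$ gives $|\cplpot(\Id,\theta)|\le C(1+\theta)$ after integrating twice (as in \cite{RBMFMK}). Since $\tst{k-1}\in L^2$ and $r>2$, we obtain $\cplen(y,\tst{k-1})\ge -\tfrac{\ac}{2}\int|\nabla y|^r-C$.
\item The loading term is bounded by $C\|y\|_{W^{1,2}}$ via a trace estimate, and Poincaré's inequality is applied to $y-h(k\tau)$.
\end{itemize}
With \ref{W_lower_bound} and \ref{H_bounds}, the functional is coercive: along a minimizing sequence, $\mechen(y_n)\le M$.

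\emph{Step 2: existence.} Lemma~\ref{lem:pos_det} gives uniform $W^{2,p}$ bounds and $\det\nabla y_n\ge 1/C_M$. We extract $y_n\weakly y$ in $W^{2,p}$ and $y_n\to y$ in $C^{1}$ by compact embedding ($p>d$). Then $\det\nabla y\ge1/C_M$ and the boundary condition passes to the limit, so $y\in\Wid{k\tau}$. Lower semicontinuity of the terms is as follows:
\begin{itemize}
\item $\int H(\nabla^2 y)$ is lower semicontinuous by convexity \ref{H_regularity}.
\item $\elpot$, $\cplpot$ and the dissipation $D^2$ (continuity of $V$ and of $\nabla y\mapsto(\nabla y)^T\nabla y$) converge by uniform convergence of $\nabla y_n$ and dominated convergence.
\item The load term is continuous.
\end{itemize}
I expect the main obstacle here to be only the lower bound on $\cplen$, and it is handled as above. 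The restriction $\tau<\tau_0$ is not essential for this argument; it merely fixes uniform constants, for instance so that the competitor has a controlled determinant.

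\emph{Step 3: Euler--Lagrange.} For $z\in\Wzero$ and $|\eps|$ small, $\yst{k}+\eps z\in\Wid{k\tau}$, since $\det\nabla\yst{k}\ge c>0$ uniformly and $\nabla z$ is bounded ($p>d$). We differentiate at $\eps=0$:
\begin{itemize}
\item For $\elpot$, $\cplpot$ and $H$, we use the $C^2$ and $C^1$ regularity, the growth bounds, uniform boundedness of $\nabla y$ and $\det^{-1}$, and dominated convergence.
\item For the dissipation, $C_\eps=(\nabla\yst{k}+\eps\nabla z)^T(\nabla\yst{k}+\eps\nabla z)$ satisfies $\tfrac{d}{d\eps}C_\eps|_{0}=(\nabla z)^T\nabla\yst{k}+(\nabla\yst{k})^T\nabla z$. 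Since $V(C^{(k-1)}_\tau,\tst{k-1})$ is frozen and symmetric by \ref{D_quadratic}, the derivative of $\tfrac{1}{2\tau}\int V[C_\eps-C^{(k-1)}_\tau,C_\eps-C^{(k-1)}_\tau]$ equals $\tfrac1\tau\int V\big[(\nabla z)^T\nabla\yst{k}+(\nabla\yst{k})^T\nabla z,\,C^{(k)}_\tau-C^{(k-1)}_\tau\big]$.
\end{itemize}
Setting the total derivative to zero and moving the dissipation term to the left-hand side gives \eqref{mechanical_step_single}.
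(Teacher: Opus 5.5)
Your proposal is correct. Your Step~3 is essentially the paper's derivation of \eqref{mechanical_step_single}; the two arguments differ only in how coercivity is obtained in the existence part.

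You bound $\cplen(y,\tst{k-1})$ from below by an absolute growth estimate. That estimate uses $\tst{k-1}\in L^2$, produces constants depending on $\|\tst{k-1}\|_{L^2}$, and works for every $\tau>0$. The paper instead compares a minimizing sequence $y_n$ with the transported previous step $\tilde y$ (your competitor), as in \eqref{comparison_yn}. It writes $\nabla y_n$ and $\nabla\yst{k-1}$ via polar decomposition and combines the matrix-root estimate \eqref{ineq:matrixcalc} with \ref{C_lipschitz}. This absorbs $|\cplen(y_n,\tst{k-1})-\cplen(\yst{k-1},\tst{k-1})|$ into $\frac{1}{2\tau}\diss$, up to $C\tau$ times elastic-energy terms; this is where the strong growth \ref{W_lower_bound} enters. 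The result is \eqref{eq:corollaryforlater}, whose constants are independent of the temperature, and this is the real purpose of $\tau_0$. The same estimate is reused in Lemma~\ref{lem:bad_mechen_bound} and Theorem~\ref{thm:apriori_toten_velo_bound} (cf.\ Remark~\ref{rem:differenceapriori}). Your route is simpler for the proposition as stated. However, $\|\tst{k-1}\|_{L^2}$ is not uniformly controlled along the scheme, so your bound cannot replace \eqref{eq:corollaryforlater} in the later a priori estimates.

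One intermediate claim of yours is false, though harmless. The bound $|\cplpot(\Id,\theta)|\le C(1+\theta)$ cannot hold. The lower bound $\ac\le-\theta\pl_\theta^2\cplpot$ in \ref{C_bounds} gives $\pl_\theta\cplpot(\Id,\theta)\le C-\ac\log\theta$ for $\theta\ge1$. Hence $\cplpot(\Id,\theta)$ decays like $-\theta\log\theta$, as in the example $C_1\theta(1-\log\theta)$. What does hold is $|\cplpot(\Id,\theta)|\le C(1+\theta^2)$, and this suffices since $\tst{k-1}\in L^2$.

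Also, $|\cplpot(F,\theta)-\cplpot(\Id,\theta)|\le C(1+|F|^2)$ follows directly from \ref{C_lipschitz} with $\tilde F=\Id$. Integrating \eqref{C_locally_lipschitz} instead would require a path inside $GL^+(d)$.
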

As a preparation we introduce the quantity
\begin{align}\label{def:subtractforce}
	E^{(k)}_\tau(y) \defas \mechen(y) - \sprod{\lst k}{y}
\end{align}
for $k \geq 1$ and  $E^{(0)}_\tau(y) \defas E^{(1)}_\tau(y)$.
Recalling the time-dependent boundary conditions in \eqref{Yid}, the following holds.
\begin{lemma}\label{lem:EFnew}
	There exists a constant $C > 0$ only depending on $g$, $f$, and $h$ such that for all  $k \in \setof{1, \ldots,  T / \tau}$  and all $y \in \Wid{t}$ it holds that
	\begin{equation*}
		|  \sprod{\lst k}{y} |
		\le \min \big\{ E^{(k)}_\tau(y),  \frac{1}{2}   \mechen(y) \big\}
		+  C.
	\end{equation*}
\end{lemma}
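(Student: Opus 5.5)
We prove the bound with $\frac12\mechen(y)$ first and then derive the bound with $E^{(k)}_\tau(y)$ from it. The load term is controlled by trace and Sobolev estimates: since $f \in W^{1,1}(I;L^2(\Omega;\R^d)) \subset L^\infty(I;L^2)$ and $g\in W^{1,1}(I;L^2(\Gamma_N;\R^d))\subset L^\infty(I;L^2(\Gamma_N))$, the averages satisfy $\|\fst k\|_{L^2(\Omega)} + \|\gst k\|_{L^2(\Gamma_N)} \le C_{f,g}$ uniformly in $k$ and $\tau$. With the trace inequality $\|y\|_{L^2(\Gamma_N)}\le C\|y\|_{H^1(\Omega)}$ this gives
\begin{align*}
|\sprod{\lst k}{y}| \le C\|y\|_{H^1(\Omega)} \le C\big(\|y-h(t)\|_{H^1(\Omega)} + \|h(t)\|_{H^1(\Omega)}\big).
\end{align*}
Since $y-h(t)$ vanishes on $\Gamma_D$ with $\haus^{d-1}(\Gamma_D)>0$, Poincaré's inequality gives $\|y-h(t)\|_{H^1}\le C\|\nabla y - \nabla h(t)\|_{L^2}$. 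Hence $|\sprod{\lst k}{y}|\le C(1+\|\nabla y\|_{L^2(\Omega)})$, where the constant is uniform in $t$ because $h\in W^{1,\infty}(I;W^{2,p})$.

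Next we bound $\|\nabla y\|_{L^2}$ by the energy. By \ref{W_lower_bound} and since $\elpot\ge W^{\rm el}_1 + W^{\rm el}_2 \ge W^{\rm el}_1$ (because $W^{\rm el}_2\ge0$) and $H\ge0$, we have $\mechen(y)\ge c_0\|\nabla y\|_{L^r}^r - C_0|\Omega|$ with $r>2$. Hence $\|\nabla y\|_{L^2}\le C(1+\mechen(y))^{1/r}$. Young's inequality, with exponent $r$ on the energy part, then yields for any $\eta>0$
\begin{align*}
|\sprod{\lst k}{y}| \le C + C(1+\mechen(y))^{1/r} \le \eta\, \mechen(y) + C_\eta.
\end{align*}
Choosing $\eta=\tfrac12$ gives the second bound $|\sprod{\lst k}{y}|\le \tfrac12\mechen(y)+C$.

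For the first bound we choose $\eta=\tfrac14$, which gives $|\sprod{\lst k}{y}|\le\tfrac14\mechen(y)+C$. Then
\begin{align*}
E^{(k)}_\tau(y) = \mechen(y)-\sprod{\lst k}{y} \ge \tfrac34\mechen(y) - C \ge \tfrac14\mechen(y) - C \ge |\sprod{\lst k}{y}| - 2C.
\end{align*}
Combining the two bounds and enlarging the constant gives the claimed minimum. The case $k=0$ is covered by the convention $E^{(0)}_\tau = E^{(1)}_\tau$.

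The only delicate point is uniformity: the constant must not depend on $\tau$, $k$, or $t$. For the loads this follows from the $L^\infty$-in-time bounds on $f$ and $g$. For the Dirichlet data it follows from the uniform bounds on $h$. The superquadratic growth $r>2$ is what makes the Young absorption work with arbitrary $\eta$. Even at linear growth in $\|\nabla y\|_{L^2}$, $r\ge 2$ would suffice.
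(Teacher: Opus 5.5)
Your proposal is correct and follows essentially the paper's argument. A trace estimate and Poincaré's inequality, together with the uniform bounds on $f$, $g$, $h$, reduce $|\langle \ell^{(k)}_\tau, y\rangle|$ to $C(1+\|\nabla y\|_{L^2(\Omega)})$. The growth condition (W.3) with Young's inequality then gives the bound $\frac12\mathcal{M}(y)+C$, and absorption into $E^{(k)}_\tau(y)=\mathcal{M}(y)-\langle \ell^{(k)}_\tau,y\rangle$ gives the other bound.

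The only differences are cosmetic. The paper uses the quadratic bound $\|\nabla y\|_{L^2(\Omega)}^2\le C\mathcal{M}(y)+C$, which needs only $r\ge 2$, and absorbs $\frac12\mathcal{M}(y)$ on the left. You instead use the $L^r$ bound with $r>2$ and the choice $\eta=\frac14$.
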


\begin{proof}
	Let $y \in \Wid{t}$.
	By \eqref{forces_mech_step}, a trace estimate, and Poincaré's inequality, we have
	\begin{align*}
		 & \quad \  |  \sprod{\lst k}{y} |                                                                                                                                                                                                                          \\
		 & \leq \left\vert \int_\Omega \fst k \cdot ( y - h(t) ) \di x
		+ \int_{\Gamma_N} \gst k \cdot ( y - h(t) ) \di \haus^{d-1} \right\vert +  \left\vert \int_\Omega \fst k \cdot   h(t) \di x
		+ \int_{\Gamma_N} \gst k \cdot  h(t) \di \haus^{d-1} \right\vert                                                                                                                                                                                            \\
		 & \leq \Vert f \Vert_{L^\infty(I;L^2(\Omega))} \big( \Vert y- h(t) \Vert_{L^2(\Omega)} + \Vert  h(t) \Vert_{L^2(\Omega)} \big) + \Vert g \Vert_{L^\infty(I;L^2(\Gamma_N))}  \big( \Vert y- h(t) \Vert_{L^2(\Gamma)} + \Vert h(t) \Vert_{L^2(\Gamma)} \big) \\
		 & \leq C \big( \Vert f \Vert_{L^\infty(I;L^2(\Omega))} +\Vert g \Vert_{L^\infty(I;L^2(\Gamma_N))} \big)  \left(\Vert  \nabla y \Vert_{L^2(\Omega)} +\Vert  h(t) \Vert_{W^{1,2}(\Omega)} \right)    .
	\end{align*}
	Since $2dq/(q-2)\geq 2$, \ref{W_lower_bound}  and Young's inequality give
	\begin{align*}
		\int_\Omega \vert \nabla y \vert^2 \di x \leq C \int_\Omega W^{\rm el}( \nabla y ) \di x + C \leq C \mechen(y) + C.
	\end{align*}
	Combining the previous estimates with Young's inequality we thus get
	\begin{align*}
		|  \sprod{\lst k}{y} |  \leq \frac{1}{2} \mechen (y) + \tilde C,
	\end{align*}
	where $\tilde C>0$ depends on $f$, $g$, $h$, and the constants $c_0$ and $C_0$ in \ref{W_lower_bound}. Eventually, the triangle inequality implies that
	\begin{align*}
		\mechen(y) \leq E^{(k)}_\tau(y) + |  \sprod{\lst k}{y} | \leq E^{(k)}_\tau(y) +  \frac{1}{2} \mechen (y) + \tilde C.
	\end{align*}
	By absorbing   $\frac{1}{2}\mechen (y)$ on the left-hand side, the statement follows from the previous two inequalities.
\end{proof}

\begin{proof}[Proof of Proposition~\ref{prop:existence_mechanical_step}]
	  \textit{Step 1 (Existence):}  We first address compactness  in   $W^{2, p}(\Omega; \R^d)$.  In the sequel, we use the notation $h^{(k)}_\tau =  h(k\tau)$, which is unrelated to the one in \eqref{htaucase2}.   In view of \eqref{Yid}, $ \tilde y_\tau^{(k-1)} \defas  h^{(k)}_\tau \circ (h^{(k-1)}_\tau)^{-1} \circ \yst{k-1} \in \mathcal{Y}_{h^{(k)}_\tau}$ is a valid competitor for the problem satisfying, by the chain rule,  
	\begin{align}\label{computationgradient2}
		\nabla \tilde y_\tau^{(k-1)}(x) & = \big((\nabla h^{(k)}_\tau)\circ (h^{(k-1)}_\tau)^{-1}  \circ \yst{k-1} \big)(x) \big(\nabla (h^{(k-1)}_\tau)^{-1} \circ \yst{k-1} \big)(x) \nabla \yst{k-1} (x) \notag \\
		                                & \eqcolon A^k_\tau(x) B^k_\tau (x)  \nabla \yst{k-1} (x)
	\end{align}  
	for a.e.\  $ x \in \Omega$, where by the inverse function theorem it holds that   $ (B^k_\tau)^{-1}(x)  =  ( \nabla  h^{(k-1)}_\tau  ) \circ (h^{(k-1)}_\tau)^{-1} \circ \yst{k-1}  ( x)    $ for a.e.\ $x \in \Omega$.
	Thus, we obtain
	\begin{align}\label{comparison_yn}
		 & \quad \ E^{(k)}_\tau(y_n)
		+ \frac{1}{\tau} \diss(\yst{k-1}, y_n ,  \tst{k-1})                                                                                                                                                                                                     \\
		 & \leq E^{(k)}_\tau(\yst{k-1}) + \vert \cplen(\yst{k-1}, \tst{k-1}) -  \cplen(y_n, \tst{k-1}) \vert + \vert  \langle \lst k ,  y_\tau^{(k-1)} -\tilde y_\tau^{(k-1)} \rangle \vert \notag                                                              \\
		 & \  + \vert \mechen(\tilde y_\tau^{(k-1)} ) - \mechen( y_\tau^{(k-1)} ) \vert + \vert \cplen\big(\tilde y_\tau^{(k-1)} , \tst{k-1}) - \cplen\big( y_\tau^{(k-1)} , \tst{k-1}) \vert + \frac{1}{\tau} \diss(\yst{k-1}, \tilde y_\tau^{(k-1)} ). \notag
	\end{align}
	We show that all the terms   on the right-hand side except for   $E^{(k)}_\tau(\yst{k-1})$ are small. Specifically,   
	the second term will be controlled via the dissipation, while the remaining terms are of order $\tau$, estimated via the regularity of $h$.  
	In view of the positivity of the determinants, see
	\eqref{Yid}, the Polar decomposition theorem yields rotations $S_n = S_n(x),  S^{(k-1)} =  S^{(k-1)}(x) \in SO(d)$ such that $\nabla y_n = S_n \sqrt{C_n}$ a.e.~in $\Omega$ for $C_n \defas (\nabla y_n)^T \nabla y_n$ and $\nabla \yst{k-1} =  S^{(k-1)} \sqrt{ C^{(k-1)}_\tau}$ a.e.~in $\Omega$ for $ C^{(k-1)}_\tau = (\nabla \yst{k-1})^T \nabla \yst{k-1}$.
	Thus, \ref{C_frame_indifference}, \ref{C_lipschitz}, the invariance of the Frobenius norm under rotations, and \eqref{ineq:matrixcalc} imply that
	\begin{align}\label{est:cplinproof}
		 & \vert W^{\rm cpl}(\nabla y_n, \tst{k-1}) - W^{\rm cpl}( \nabla \yst{k-1}, \tst{k-1}) \vert =   \vert W^{\rm cpl}(\sqrt{C_n}, \tst{k-1}) - W^{\rm cpl}(\sqrt{C^{(k-1)}_\tau}, \tst{k-1}) \vert \notag \\
		 & \leq C_0 (1+ \vert \nabla y_n \vert + \vert \nabla \yst{k-1} \vert ) \vert \sqrt{C_n} - \sqrt{C^{(k-1)}_\tau} \vert \notag                                                                           \\
		 & \leq C_0 (1+ \vert \nabla y_n \vert + \vert \nabla \yst{k-1} \vert ) (\alpha_{n,1} + \alpha^{(k-1)}_{1} )^{-1}\vert C_n -  C^{(k-1)}_\tau \vert
	\end{align}
	for a.e.~$x \in \Omega$, where $\alpha_{n,i}$ and $ \alpha^{(k-1)}_{i}$, $i=1,\ldots,d$, denote the eigenvalues of $\sqrt{C_n}$ and $\sqrt{ C^{(k-1)}_\tau}$, respectively, in increasing order.  
	Due to the invariance of the determinant and the Frobenius norm under rotations we find
	\begin{align*}
		\vert \det(\nabla y_n) \vert = \det (\sqrt{C_n} ) = \Pi_{i=1}^d \alpha_{n,i} \leq \alpha_{n,1} \vert \sqrt{C_n} \vert^{d-1} = \alpha_{n,1} \vert \nabla y_n \vert^{d-1}.
	\end{align*}
	Let $c_0>0$ be the constant in \eqref{bound_V1V2dissipation}.
	Then, an integration of \eqref{est:cplinproof}, Young's inequality with a constant $(  c_0 /\tau)^{1/2}$ and powers $2$,  $\alpha_{n,1} + \alpha^{(k-1)}_{1} \geq \alpha_{n,1}$, \eqref{coppler}, and \eqref{dissipation} imply that
	\begin{align}\label{estimate:complicated}
		\big|
		\cplen(  y_n, \tst{k-1})
		- \cplen(  \yst{k-1}, \tst{k-1})
		\big|
		 & \leq  \frac{1}{2\tau} \diss(\yst{k-1}, y_n ,  \tst{k-1})                                                                                                                  \\
		 & \  + \frac{\tau  C_0^2}{2  c_0 } \int_\Omega (1+ \vert \nabla y_n \vert + \vert \nabla \yst{k-1} \vert )^2 (\alpha_{n,1} + \alpha^{(k-1)}_{1} )^{-2} \di x \notag \\
		 & \leq  \frac{1}{2\tau} \diss(\yst{k-1}, y_n ,  \tst{k-1}) \notag                                                                                                           \\ &  \ \  \  + \frac{\tau  C_0^2}{2  c_0 } \int_\Omega (1+ \vert \nabla y_n \vert + \vert \nabla \yst{k-1} \vert )^2 \frac{\vert \nabla y_n \vert^{2d-2}}{\vert \det(\nabla y_n ) \vert^2} \di x. \notag
	\end{align}
	We employ Young's inequality (first with powers $d$ and $d /(d-1)$ and then with powers $ q / (q - 2)$ and $ q/2$ for $q \ge \frac{pd}{p-d}$), and use the inequality $(a+b)^z \leq 2^{z-1} (a^z + b^z)$ for $z \geq 1$ which together
	with \ref{W_lower_bound} yields
	\begin{align}\label{estimatecomplicated2}
		 & \int_\Omega (1+ \vert \nabla y_n \vert + \vert \nabla \yst{k-1} \vert )^2 \frac{\vert \nabla y_n \vert^{2d-2}}{\vert \det(\nabla y_n ) \vert^2} \di x
		\leq C \int_\Omega (1+ \vert \nabla y_n \vert^{2d} + \vert \nabla \yst{k-1} \vert^{2d} )  \frac{1}{\vert \det(\nabla y_n ) \vert^2} \di x \notag                                          \\
		 & \leq C\int_\Omega  \big(     \vert \nabla y_n \vert^{2d q /(q-2)} + \vert \nabla \yst{k-1} \vert^{2d q /(q-2)}   +  \vert \det(\nabla y_n ) \vert^{-q}  \big) \di x + C \notag \\
		 & \leq C\int_\Omega W^{\rm el} (\nabla y_n)  \di x  + C\int_\Omega W^{\rm el} (\nabla \yst{k-1})  \di x + C ,
	\end{align}
	where the constant depends on $c_0$, $C_0$, $d$, $q$, and $\Omega$.

	Notice that $\nabla h$ and $\nabla h^{-1}$ are uniformly bounded in $L^\infty(I\times \Omega)$, see \eqref{Yid}.
	Recalling \eqref{computationgradient2}, multiplication with $(B^k_\tau)^{-T}$ from the left and $(B^k_\tau)^{-1}$ from the right, we have
	\begin{align}\label{helpfulcomputation2}
		\vert (B^k_\tau)^T (A^k_\tau)^T A^k_\tau B^k_\tau - \Id \vert \leq  C \vert (A^k_\tau)^T (A^k_\tau -   (B^k_\tau)^{-1}) + ((A^k_\tau)^T -(B^k_\tau)^{-T}) (B^k_\tau)^{-1}  \vert \leq C \vert A^k_\tau - (B^k_\tau)^{-1} \vert
	\end{align}
	a.e.\ in $\Omega$.
	By \eqref{bound_V1V2dissipation},
	\eqref{computationgradient2}, \eqref{helpfulcomputation2}, the fundamental theorem of calculus, Young's inequality, \ref{W_lower_bound}, $s^2 \le 1+s^r$ for all $s \ge 0$,  and the regularity of $h$ we find that
	\begin{align}\label{dissipationboundary}
		\frac{1}{\tau} \diss(\yst{k-1}, \tilde y_\tau^{(k-1)} ) & \leq C \tau^{-1} \int_\Omega \vert (\nabla \yst{k-1})^T ((B^k_\tau)^T (A^k_\tau)^T A^k_\tau B^k_\tau - \Id )\nabla \yst{k-1} \vert^2 \di x \notag                                                  \\
		                                                        & \leq C \tau \Vert \partial_t \nabla h \Vert_{L^\infty(I\times \Omega)} \int_\Omega   \vert \nabla \yst{k-1} \vert^2 \di x   \leq C\tau  \int_\Omega W^{\rm el} (\nabla \yst{k-1})  \di x + C \tau.
	\end{align}
	Further, similarly to \eqref{helpfulcomputation2}, we have
	\begin{align*}
		\vert \nabla \tilde y_\tau^{(k-1)} - \nabla y_\tau^{(k-1)} \vert \leq \vert \nabla y_\tau^{(k-1)} \vert \vert A^k_\tau - (B^k_\tau)^{-1} \vert
	\end{align*}
	a.e.\ in $\Omega$.
	Using \ref{C_lipschitz}, we thus proceed as in \eqref{dissipationboundary} and obtain
	\begin{align}\label{couplingeasy}
		\vert \cplen\big(\tilde y_\tau^{(k-1)} , \tst{k-1}) - \cplen\big( y_\tau^{(k-1)} , \tst{k-1}) \vert \leq C\tau  \int_\Omega W^{\rm el} (\nabla \yst{k-1})  \di x + C \tau.
	\end{align}
	As $f \in L^\infty(I;L^2(\Omega))$ and $g \in L^\infty(I; L^2(\Gamma_D))$, $ \vert h^{(k-1)}_\tau (z) - h^{(k)}_\tau (z) \vert\leq \tau \Vert\partial_t h (z) \Vert_{L^\infty(I)}$ for all  $z \in \overline{\Omega}$,  the regularity of $h$ implies that
	\begin{align}\label{forces2estimate}
		\vert  \langle \lst k ,  y_\tau^{(k-1)}  - \tilde y_\tau^{(k-1)}  \rangle \vert \leq C\tau.
	\end{align}
	Notice that we have $\vert \det(A^k_\tau B^k_\tau) - 1 \vert \leq C (1+ \vert A^k_\tau \vert^{d-1} + \vert B^k_\tau \vert^{d-1} ) \vert A^k_\tau - (B^k_\tau)^{-1} \vert$.
	In view of \ref{W_lipschitz}, the uniform bounds of  $\nabla h$ and $(\nabla h)^{-1}$, and the fact that their determinants are bounded away from zero we obtain a.e.\ in $\Omega$  
	\begin{align*}
		 & \quad \big| W^{\rm el}_2\big(\det(A^k_\tau B^k_\tau  \nabla \yst{k-1}  ) \big) -  W^{\rm el}_2\big(\det(\nabla \yst{k-1}  ) )\big) \big|            \\
		 & \leq C (1 + \vert \det (  \nabla \yst{k-1} ) \vert^{-q-1} \vert )  \det (A^k_\tau B^k_\tau \nabla \yst{k-1}  ) - \det ( \nabla \yst{k-1}   )  \vert \\
		 & \leq C (\vert \det (  \nabla \yst{k-1} )^{-q} \vert + \vert \det (  \nabla \yst{k-1}  ) \vert)  \vert A^k_\tau - (B^k_\tau)^{-1} \vert  .
	\end{align*}
     Note that by \ref{W_lower_bound} the right-hand side can be controlled by $C\tau  \int_\Omega W^{\rm el} (\nabla \yst{k-1})  \di x + C \tau$. 
	  Next, \ref{H_regularity} and \ref{H_bounds} imply that $ \vert H(G) - H(\tilde G) \vert \leq C (1 + \vert G \vert^{p-1} + \vert \tilde G \vert^{p-1}) \vert G - \tilde G \vert$ for all $G, \tilde G \in \R^{d\times d \times d}$.
	The term involving $W^{\rm el}_1$ is estimated as in \eqref{couplingeasy} by using  \ref{W_lipschitz}, which then yields	\begin{align}\label{mechanicestimatetough}
		\vert \mechen(\tilde y_\tau^{(k-1)} ) - \mechen( y_\tau^{(k-1)} ) \vert \leq C\tau  \int_\Omega W^{\rm el} (\nabla \yst{k-1})  \di x + C \tau .
	\end{align}
	  Eventually, \eqref{def:subtractforce}, Lemma~\ref{lem:EFnew}, \eqref{comparison_yn}, \eqref{estimate:complicated}--\eqref{estimatecomplicated2}, and \eqref{dissipationboundary}--\eqref{mechanicestimatetough} imply that  
	\begin{align}\label{eq:corollaryforlater}
		E^{(k)}_\tau(y_n)
		+  \frac{1}{  2\tau }  \diss(\yst{k-1}, y_n ,  \tst{k-1})
		 & \leq E^{(k)}_\tau(\yst{k-1}) + C \tau \big(1+ E^{(k)}_\tau(y_n) + E^{(k-1)}_\tau(\yst{k-1}) \big).
	\end{align}
	Choosing $\tau_0>0$ such that $C \tau_0 < 1/2$, Lemma~\ref{lem:EFnew} and Lemma~\ref{lem:pos_det} then show the desired coercivity in $W^{2,p}(\Omega;\R^d)$ for $\tau \leq \tau_0$.
	The functional is weakly lower semicontinuous on $W^{2,p}(\Omega;\R^d)$ by the convexity of $\hypot$, see \ref{H_regularity}, the compact embedding $W^{2,p}(\Omega; \R^d) \subset W^{1,\infty}(\Omega; \R^d)$, and the continuity of $\elpot$,   $\cplpot$, and  $D^2$. This proves the existence of a minimizer in $\Wid{t}\subset W^{2,p}(\Omega;\R^d)$.

	  \textit{Step 2 (Euler-Lagrange equation):}  The   treatment of the convex term $\hypot$ is standard due to   \ref{H_bounds}.    
	Next, we compute the G\^ateaux derivative of $y \mapsto \frac{1}{\tau} \diss(\yst{k-1}, y , \tst{k-1})$ in $\Wid{k \tau}$. Let $z \in W^{2,p}_{\Gamma_D}(\Omega;\R^d)$.  Recalling that $ C^{(k-1)}_\tau = (\nabla \yst{k-1})^T \nabla \yst{k-1}$ and $ C^{(k)}_\tau = (\nabla \yst{k})^T \nabla \yst{k}$, we find
	\begin{align*}
		 & \qquad \lim\limits_{\delta \to 0} \frac{1}{\delta}   \left(\frac{1}{\tau} \diss(\yst{k-1}, \yst{k} + \delta z , \tst{k-1}) - \frac{1}{\tau} \diss(\yst{k-1}, \yst{k}
		, \tst{k-1}) \right)                                                                                                                                                                                                 \\
		 & = \frac{1}{2\tau} \lim\limits_{\delta \to 0} \frac{1}{\delta}   \Bigg( \int_\Omega V( C^{(k-1)}_\tau, \tst{k-1}) \Big[(\nabla \yst{k} + \delta \nabla z)^T (\nabla \yst{k} + \delta \nabla z) -  C^{(k-1)}_\tau , \\
		 & \qquad \qquad  \qquad \qquad \qquad \qquad  \qquad \qquad(\nabla \yst{k} + \delta \nabla z)^T (\nabla \yst{k} + \delta \nabla z) -  C^{(k-1)}_\tau \Big] \di x                                                    \\
		 & \qquad \qquad   - \int_\Omega V( C^{(k-1)}_\tau, \tst{k-1}) [C^{(k)}_\tau  -  C^{(k-1)} _\tau,  C^{(k)}_\tau  -  C^{(k-1)}_\tau] \di x   \Bigg)                                                                   \\
		 & = \frac{1}{\tau}     \int_\Omega V( C^{(k-1)}_\tau, \tst{k-1}) [(  \nabla z)^T  \nabla \yst{k} +   (\nabla \yst{k})^T\nabla z , C^{(k)}_\tau  -  C^{(k-1)}_\tau] \di x ,
	\end{align*}
	where we have used the symmetry of $V$ in the last step.
	The G\^ateaux differentiability of the mechanical energy relies on the uniform bound on gradients and the control on the determinant, see \eqref{pos_det}.
	We refer  to \cite[Proposition 3.2]{MielkeRoubicek20Thermoviscoelasticity} and  \cite[Proposition 4.1]{MielkeRoubicek20Thermoviscoelasticity} for   further   details on the other terms.
\end{proof}
From the previous proof, we directly deduce  the following.   For notational convenience, we extend $f,g$ by setting $f(t) =g(t) = 0$ for $t < 0$.  
\begin{lemma}[Bound on mechanical energy and dissipation]\label{lem:bad_mechen_bound}
	Let  $\tau_0 \in (0,1]$,  $k \in   \{1, \ldots,  T / \tau \} $, $\yst{k}\in \Wid{k\tau}$ , $\yst{k-1}\in \Wid{(k-1)\tau}$, and $\tst{k-1}\in L^2_+(\Omega)$
	be as in Proposition~\ref{prop:existence_mechanical_step}.  Then, for  $\tau \in (0, \tau_0)$ it holds that
	\begin{align*}
		 & \quad \   E^{(k)}_\tau(\yst{k})
		+ \frac{1}{   2\tau }  \diss(\yst{k-1}, \yst{k} ,  \tst{k-1})                                                                                                                                                                                 \\
		 & \quad\quad \quad \quad \quad \quad \quad \leq  \frac{1+C \left(\tau + \int_{(k-2)\tau }^{k\tau} \Vert f'(t) \Vert_{L^2(\Omega)} + \Vert g'(t) \Vert_{L^2(\Gamma_N)} \di t \right)}{1-C\tau}  E^{(k-1)}_\tau(\yst{k-1})+ C \tau .
	\end{align*}
\end{lemma}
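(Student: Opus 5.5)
The plan is to reuse estimate \eqref{eq:corollaryforlater} from the proof of Proposition~\ref{prop:existence_mechanical_step}, now with $y_n$ replaced by the minimizer $\yst{k}$. The derivation of \eqref{eq:corollaryforlater} only used that $y_n$ has energy below that of the competitor $\tilde y_\tau^{(k-1)}$. Since $\yst{k}$ is a minimizer, comparing with $\tilde y_\tau^{(k-1)}$ and repeating \eqref{comparison_yn}--\eqref{mechanicestimatetough} gives
\begin{align*}
E^{(k)}_\tau(\yst{k}) + \frac{1}{2\tau}\diss(\yst{k-1},\yst{k},\tst{k-1}) \le E^{(k)}_\tau(\yst{k-1}) + C\tau\big(1+E^{(k)}_\tau(\yst{k}) + E^{(k-1)}_\tau(\yst{k-1})\big).
\end{align*}
In the estimates \eqref{estimatecomplicated2}--\eqref{mechanicestimatetough}, the quantities $\int_\Omega \elpot(\nabla y)\di x$ are bounded by $\mechen(y)$, and by Lemma~\ref{lem:EFnew} we have $\mechen(y)\le 2E^{(j)}_\tau(y)+C$. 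This is how the right-hand side ends up in terms of $E$.

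Next I absorb $C\tau E^{(k)}_\tau(\yst{k})$ into the left-hand side. This needs $C\tau<1$, which holds after shrinking $\tau_0$. It also needs care with signs, since $E$ may be negative. By Lemma~\ref{lem:EFnew}, $E^{(k)}_\tau\ge \frac12\mechen - C\ge -C$, so adding a constant to all $E$'s (absorbed into the final $C\tau$) lets me treat them as nonnegative where needed. Dividing by $1-C\tau$ (the dissipation term only gets larger, and $1/(1-C\tau)\ge1$) yields
\begin{align*}
E^{(k)}_\tau(\yst{k}) + \frac{1}{2\tau}\diss \le \frac{E^{(k)}_\tau(\yst{k-1}) + C\tau E^{(k-1)}_\tau(\yst{k-1})}{1-C\tau} + C\tau.
\end{align*}

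It remains to replace $E^{(k)}_\tau(\yst{k-1})$ by $E^{(k-1)}_\tau(\yst{k-1})$. Their difference is $\langle \lst{k-1}-\lst{k},\yst{k-1}\rangle$. I write $\fst{k}-\fst{k-1}=\tau^{-1}\int_{(k-1)\tau}^{k\tau}(f(t)-f(t-\tau))\di t$, and likewise for $g$. This gives $\|\fst k-\fst{k-1}\|_{L^2}\le \int_{(k-2)\tau}^{k\tau}\|f'\|_{L^2}\di t$, using the zero extension for $t<0$; the case $k=1$ is consistent with $E^{(0)}_\tau=E^{(1)}_\tau$. Combining the trace and Poincaré argument of Lemma~\ref{lem:EFnew} with $\mechen\le 2E+C$, I get
\begin{align*}
|\langle \lst{k-1}-\lst{k},\yst{k-1}\rangle| \le C\Big(\int_{(k-2)\tau}^{k\tau}\|f'\|_{L^2(\Omega)}+\|g'\|_{L^2(\Gamma_N)}\di t\Big)\big(E^{(k-1)}_\tau(\yst{k-1})+C\big).
\end{align*}
The additive constant contributes a term of the same integral size rather than $C\tau$. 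By the same shift of $E$ by a constant, it can be absorbed into the multiplicative factor.

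I expect the main obstacle to be this sign and shift bookkeeping, which must keep the constant in the form $C\tau$ as stated. I would shift to $\tilde E\defas E+C_*\ge0$ throughout, prove the inequality for $\tilde E$, and then subtract back. The error this creates is $C_*(\text{factor}-1)$, which is of order $\tau+\int\|f'\|+\|g'\|$. If it does not collapse to $C\tau$, I would accept the integral contribution as harmless for the later discrete Gronwall argument in Lemma~\ref{lem:discretegronwall}.
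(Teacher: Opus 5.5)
Your proposal is correct and follows the paper's proof. You apply \eqref{eq:corollaryforlater} to the minimizer $\yst{k}$, absorb $C\tau E^{(k)}_\tau(\yst{k})$ and divide by $1-C\tau$, and then swap $E^{(k)}_\tau(\yst{k-1})$ for $E^{(k-1)}_\tau(\yst{k-1})$ via the force-difference bounds \eqref{estimatebodyforce}--\eqref{estimateNeumannforce} and Lemma~\ref{lem:EFnew}. Your caveat about the constant is also justified. The paper's argument likewise yields an additive term of order $\tau+\int_{(k-2)\tau}^{k\tau}\Vert f'(t)\Vert_{L^2(\Omega)}+\Vert g'(t)\Vert_{L^2(\Gamma_N)}\di t$, which it simply writes as $C\tau$. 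As you note, this is harmless once summed over $k$ in Theorem~\ref{thm:apriori_toten_velo_bound}.
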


\begin{proof}
	From the previous proof, see \eqref{eq:corollaryforlater}, we deduce that
	\begin{align}\label{ineq:badbalance}
		(1- C\tau) E^{(k)}_\tau(\yst{k})
		+ \frac{1}{  2\tau } \diss(\yst{k-1}, \yst{k} ,  \tst{k-1})
		\leq E^{(k)}_\tau(\yst{k-1})+ C \tau \big(1 + E^{(k-1)}_\tau(\yst{k-1}) \big).
	\end{align}  
	Observe that 
	\begin{align}\label{forcecomputation}
		E^{(k-1)}_\tau(\yst{k-1}) - E^{(k)}_\tau(\yst{k-1}) & =   \langle \lst{k}, \yst{k-1} \rangle -   \langle \lst{k-1}, \yst{k-1} \rangle                                                   \\
		                                                    & = \langle \lst{k} - \lst{k-1}, \yst{k-1} - h((k-1)\tau)  \rangle  +    \langle \lst{k} - \lst{k-1}, h((k-1)\tau)  \rangle  \notag
	\end{align}
	  for $k \geq 2$, while the difference for $k = 1$ vanishes by definition, see \eqref{def:subtractforce}.
	We proceed with estimating the  forces.    
	By the fundamental theorem of calculus and the Minkowski's integral inequality it holds that
	\begin{align}\label{estimatebodyforce}
		\Vert  \fst k - \fst {k-1} \Vert_{L^2(\Omega)} & = \left(\int_\Omega  \bigg( \tau^{-1} \int_{(k-1)\tau}^{k\tau} f(t) \di t -  \tau^{-1} \int_{(k-2)\tau}^{(k-1)\tau} f(t) \di t \bigg)^2 \di x \right)^{1/2} \notag \\
		                                               & \leq \int_{(k-2)\tau}^{k\tau} \Vert f'(t) \Vert_{L^2(\Omega)} \di t.
	\end{align}
	Similarly, we derive that
	\begin{align}\label{estimateNeumannforce}
		\Vert  \gst k - \gst {k-1} \Vert_{L^2(\Gamma_N)} \leq \int_{(k-2)\tau}^{k\tau} \Vert g'(t) \Vert_{L^2(\Gamma_N)} \di t.
	\end{align}
	  As in the proof of Lemma~\ref{lem:EFnew}, we use \eqref{forcecomputation}, Hölder's inequality, a trace estimate, and Poincaré's inequality to obtain for $k \ge 2$  
	\begin{align*}
		 & \quad \ \left\vert  E^{(k-1)}_\tau(\yst{k-1}) - E^{(k)}_\tau(\yst{k-1}) \right\vert \\
		 & \leq
		C  \Big(  \Vert\fst k - \fst {k-1} \Vert_{L^2(\Omega)} +  \Vert  \gst k - \gst {k-1} \Vert_{L^2(\Gamma_N)} \big) \big( \Vert \nabla \yst{k-1} \Vert_{L^2(\Omega)} + \Vert h \Vert_{L^\infty(I;W^{1,2}(\Omega))} \big).
	\end{align*}
	Thus,  \eqref{estimatebodyforce}, \eqref{estimateNeumannforce}, Young's inequality, \ref{W_lower_bound}, and the regularity of $h$ give
	\begin{align*}
		\left\vert  E^{(k-1)}_\tau(\yst{k-1}) - E^{(k)}_\tau(\yst{k-1}) \right\vert  \leq C \big(1+ \mechen(\yst{k-1}) \big) \int_{(k-2)\tau}^{k\tau} \big(\Vert f'(t) \Vert_{L^2(\Omega)} + \Vert g'(t) \Vert_{L^2(\Gamma_N)}  \big)\di t .
	\end{align*}
	Thus,   \eqref{ineq:badbalance} and Lemma~\ref{lem:EFnew} yield, for  $k \ge 1$,  
	\begin{align*}
		 & \quad \  (1- C\tau) E^{(k)}_\tau(\yst{k})
		+  \frac{1}{2 \tau} \diss(\yst{k-1}, \yst{k} ,  \tst{k-1})                                                                                                                                         \\
		 & \leq E^{(k-1)}_\tau(\yst{k-1})+ C \left(\tau + \int_{(k-2)\tau}^{k\tau} \Vert f'(t) \Vert_{L^2(\Omega)} + \Vert g'(t) \Vert_{L^2(\Gamma_N)} \di t \right) \big(1 + E^{(k-1)}_\tau(\yst{k-1}) \big) .
	\end{align*}
	We divide the previous equation by $(1-C\tau)$, and  get the desired estimate, up to changing the constant $C$, where we recall that $\tau_0$ was chosen small enough such that $C\tau < \frac{1}{2}$.  
\end{proof}

\begin{remark}[A priori bounds in \cite{RBMFMK}]\label{rem:differenceapriori}
	In the setting of \cite{RBMFMK}, one proves   a priori bounds on the mechanical energy by summing up the weak formulations of the (time-discrete) mechanical and heat equations, tested with the discrete strain rate and the constant function, respectively. As our time-discrete approximation scheme leads to different time-discrete Euler-Lagrange equations, we therefore proceed differently and prove the mechanical bounds without resorting to the thermal step. As a consequence, a larger growth condition in \ref{W_lower_bound} is needed, allowing for a better control of the coupling potentials in \eqref{comparison_yn}, see the proof of Proposition~\ref{prop:existence_mechanical_step}. Indeed, in contrast  to \cite[Lemma~3.6]{RBMFMK}, there is no dependence on the temperature on the right-hand side of the bound in Lemma \ref{lem:bad_mechen_bound}.  
	Further, the (rather mild) assumption on the additive decomposition of the elastic energy density in \ref{W_regularity} is only used in estimate \eqref{mechanicestimatetough}, exploiting the Lipschitz estimate in \ref{W_lipschitz},     which ensures that the constant $C>0$ is independent of the previous time-step.  
\end{remark}
Given the minimizer $ \yst{k} \in \Wid{k\tau}$ from Proposition~\ref{prop:existence_mechanical_step},
$\tst{k} \in L^2_+(\Omega)$ is defined as the minimizer of the functional in \eqref{thermal_step}.
Before we address its existence in Proposition~\ref{prop:existence_thermal_step} below, as a consequence of Lemma~\ref{lem:bad_mechen_bound}, we derive global bounds for $\yst{k}$
that are independent of $k \in \{   0, \ldots,   T/\tau   \} $.  For $l \in \N$, and a sequence $(a_k)_{k\geq 0}$, we define the discrete differences as
\begin{align}\label{discretedifference}
	\ddif a_l \defas \frac{a_l - a_{l-1}}{\tau} \quad \text{for } l \ge 1.  
\end{align}
\begin{theorem}[Global mechanical bounds]\label{thm:apriori_toten_velo_bound}
	  Suppose that  $\yst 0, \ldots,   \yst{T / \tau}$  from Proposition~\ref{prop:existence_mechanical_step} exist Then,  for all $k \in \{   0, \ldots,   T/\tau   \} $ we  have that
	\begin{subequations}\label{apriorimech}
		\begin{align}
			\mechen(\yst{k}) & \leq C, \label{mechfirstbound}     \\
			\sum_{l=1}^{k}  \tau \Vert \ddif  \yst l \Vert_{W^{1,2}(\Omega)}^2 \di x
			                 & \leq C, \label{velocityfirstbound}
		\end{align}
	\end{subequations}
	  where $C>0$ is a constant only depending on   the data $f,g,h$ and $T$.  
\end{theorem}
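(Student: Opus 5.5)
The plan is to iterate Lemma~\ref{lem:bad_mechen_bound} with the discrete Gronwall inequality to get \eqref{mechfirstbound}, and then to sum the dissipation terms and convert them into velocity bounds via Theorem~\ref{coercivitystrainrates}.

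\emph{Energy bound.} Set $a_k \defas E^{(k)}_\tau(\yst{k})$ and $\gamma_k \defas C\big(\tau + \int_{(k-2)\tau}^{k\tau} \Vert f'\Vert_{L^2(\Omega)} + \Vert g'\Vert_{L^2(\Gamma_N)} \di t\big)$. Since $\diss \geq 0$, Lemma~\ref{lem:bad_mechen_bound} gives
\begin{align*}
a_k \leq \frac{1+\gamma_k}{1-C\tau}\, a_{k-1} + C\tau .
\end{align*}
By Lemma~\ref{lem:EFnew}, $a_{k-1} \geq -C$, so $a_{k-1}+C' \geq 0$ for a suitable $C'$, and the recursion can be rewritten for the nonnegative quantity $a_k + C'$. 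Writing $\frac{1+\gamma_k}{1-C\tau} \leq 1 + \tilde\gamma_k$ with $\tilde\gamma_k \leq C\tau + 2\gamma_k$ (using $C\tau_0<1/2$) and telescoping, we obtain $a_k + C' \leq \beta + \sum_{l<k} \tilde\gamma_{l+1}(a_l + C')$ with $\beta = a_0 + C' + CT$. Lemma~\ref{lem:discretegronwall} then applies, because
\begin{align*}
\sum_l \tilde\gamma_l \leq C\big(T + 2\Vert f'\Vert_{L^1(I;L^2)} + 2\Vert g'\Vert_{L^1(I;L^2)}\big),
\end{align*}
where each interval is counted at most twice. This gives $a_k \leq C$ uniformly in $k$ and $\tau$. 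Lemma~\ref{lem:EFnew} yields $\mechen(y) \leq 2E^{(k)}_\tau(y) + 2C$, which is \eqref{mechfirstbound}; the starting value uses $E^{(0)}_\tau(y_0) = E^{(1)}_\tau(y_0)$, which is finite.

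\emph{Dissipation sum.} Keep the dissipation term in Lemma~\ref{lem:bad_mechen_bound} and now use the uniform bound on $a_{k-1}$:
\begin{align*}
a_k + \frac{1}{2\tau}\diss(\yst{l-1},\yst{l},\tst{l-1}) \leq a_{k-1} + C(\tau + \gamma_k).
\end{align*}
Summing over $l=1,\dots,k$ telescopes, and with $a_k \geq -C$ this gives $\sum_l \tau^{-1}\diss(\yst{l-1},\yst{l},\tst{l-1}) \leq C$. By \eqref{bound_V1V2dissipation} this means $\sum_l \tau^{-1}\Vert C^{(l)}_\tau - C^{(l-1)}_\tau\Vert^2_{L^2} \leq C$.

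\emph{Velocity.} From \eqref{mechfirstbound} and Lemma~\ref{lem:pos_det} there is a $\mu>0$, independent of $\tau$ and $k$, with $\yst{l} \in W_\mu$ for all $l$. Theorem~\ref{coercivitystrainrates}, in the form \eqref{nonlinkornwithtimeboundary}, gives
\begin{align*}
\Vert \yst{l}-\yst{l-1}\Vert_{W^{1,2}} \leq C_\mu \Vert C^{(l)}_\tau - C^{(l-1)}_\tau\Vert_{L^2} + \tilde C\tau .
\end{align*}
Squaring, dividing by $\tau$ and summing, we get
\begin{align*}
\sum_l \tau\Vert \ddif \yst l\Vert^2_{W^{1,2}} \leq 2C_\mu^2 \sum_l \tau^{-1}\Vert C^{(l)}_\tau-C^{(l-1)}_\tau\Vert^2_{L^2} + 2\tilde C^2 T \leq C,
\end{align*}
which is \eqref{velocityfirstbound}.

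The main obstacle is the Gronwall step. The energies $E^{(k)}_\tau$ may be negative, which is handled by the shift $C'$ from Lemma~\ref{lem:EFnew}. The other point to check is that the multiplicative factors have summable logarithms, which relies on $f'$ and $g'$ being in $L^1$ and on the overlapping windows $[(k-2)\tau,k\tau]$ being counted only twice.
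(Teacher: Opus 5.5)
Your proposal is correct and follows essentially the same route as the paper: iterate Lemma~\ref{lem:bad_mechen_bound} with the discrete Gronwall inequality (Lemma~\ref{lem:discretegronwall}), pass from $E^{(k)}_\tau$ to $\mechen$ via Lemma~\ref{lem:EFnew}, and then convert the summed dissipation into the velocity bound using \eqref{bound_V1V2dissipation} and \eqref{nonlinkornwithtimeboundary}. The only difference is cosmetic: you handle possibly negative energies by shifting with the lower bound $E^{(k)}_\tau \geq -C$ from Lemma~\ref{lem:EFnew}, whereas the paper builds this into its Gronwall lemma through the $a_l \vee 0$ trick.
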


\begin{proof}
	  By Lemma~\ref{lem:bad_mechen_bound} we have
	\begin{align*}
		 & \qquad  E^{(l)}_\tau(\yst{l})
		+ \frac{1}{  2\tau }  \diss(\yst{l-1}, \yst{l} ,  \tst{l-1})                                                                                                                                                  \\
		 & \leq  E^{(l-1)}_\tau(\yst{l-1}) +2C \left(C\tau  + \int_{(l-2)\tau  }^{l\tau} \Vert f'(t) \Vert_{L^2(\Omega)} + \Vert g'(t) \Vert_{L^2(\Gamma_N)} \di t \right)   E^{(l-1)}_\tau(\yst{l-1})+ C \tau .
	\end{align*}
	for $l = 1, \dots, k$, where we used that $\tau \in (0,\tau_0]$, $C\tau_0 <1/2$, and $\frac{1}{1-C\tau} \le C\tau$, possibly passing to a bigger constant.     
	Summing the above inequality over all $l = 1,...,k$, we derive that
	\begin{align}\label{ineq:summedup}
		 & \quad \ E^{(k)}_\tau(\yst{k}) +  \sum_{l =1}^k    \frac{1}{  2\tau } \diss(   \yst{l-1}, \yst{l} ,  \tst{l-1} )                                                                                                                                    \\
		 & \leq E^{(0)}_\tau(\yst{0}) +  \sum_{l = 1}^{k}  2 C\left(  C \tau + \int_{(l-2)\tau }^{l\tau} \Vert f'(t) \Vert_{L^2(\Omega)} + \Vert g'(t) \Vert_{L^2(\Gamma_N)} \di t \right)    E^{(l-1)}_\tau(\yst{l-1}) + C T . \notag
	\end{align}
	Using the nonnegativity of $\diss$ and the  discrete version of Gronwall's lemma,  see Lemma~\ref{lem:discretegronwall}, we find that
	\begin{align}\label{ineq:mechenforcebound}
		E^{(k)}_\tau(\yst{k}) \leq 2 \big( \vert E^{(0)}_\tau(\yst{0}) \vert + CT \big) \exp\left( 4 C^2 T + 8 C  \Vert f' \Vert_{L^1(I;L^2(\Omega))} + 8 C   \Vert g' \Vert_{L^1(I;L^2(\Gamma_N))} \right).
	\end{align}
	  Then, Lemma~\ref{lem:EFnew} and  the regularity of $\yst{0}$, see \eqref{initial_cond}, imply that \eqref{mechfirstbound}.

	We now prove \eqref{velocityfirstbound}.  \eqref{mechfirstbound} allows to use Lemma~\ref{lem:pos_det}, implying that all the assumption of Theorem~\ref{coercivitystrainrates} are satisfied.
	Thus, by \eqref{nonlinkornwithtimeboundary} we deduce that there exists some $C>0$ such that
	\begin{align*}
		\Vert \yst{k} - \yst{k-1} \Vert_{W^{1,2}(\Omega)}^2  \leq C   \Vert (\nabla \yst{k})^T \nabla \yst{k} - (\nabla \yst{k-1})^T \nabla \yst{k-1} \Vert_{L^2(\Omega)}^2 + C \vert \tau \vert^2.
	\end{align*}
	In view of \eqref{bound_V1V2dissipation}, \eqref{ineq:summedup}, \eqref{ineq:mechenforcebound} and the regularity of $g$ and $f$  we thus find
	\begin{align*}
		\tau  \sum_{l=1}^k \tau^{-2} \Vert \yst{l} - \yst{l-1} \Vert_{W^{1,2}(\Omega)}^2  \leq CT + C  \sum_{l=1}^k \frac{1}{   2\tau }  \diss(  \yst{l-1}, \yst{l} ,  \tst{l-1} )
		\leq C.
	\end{align*}
	This concludes the proof.
\end{proof}

In the next lemma we discuss the well-definedness of the thermal step. Recall the definition of  $h_\tau$   in  \eqref{htaucase2}.

\begin{proposition}[Thermal step]\label{prop:existence_thermal_step}
	There exists $\tau_0 \in (0,1]$ such that for $\tau \in (0, \tau_0)$ and all $k \in \{1, \ldots,  T / \tau \} $ with  $\yst{k}\in \Wid{k\tau}$ , $\yst{k-1}\in \Wid{(k-1)\tau}$, $\tst{k-1}\in L^2_+(\Omega)$
	as in Proposition~\ref{prop:existence_mechanical_step},  
	the minimization problem \eqref{thermal_step} is well-posed on $H^1_+(\Omega)$.
	  More precisely,
	\begin{align*}
		\mathcal{T}(\theta)
		\defas & \int_\Omega \int_0^\theta \frac{1}{\tau} \big(
		\inten(\nabla \yst k, s)
		- \inten(\nabla \yst{k-1}, \tst{k-1})
		\big) \di s \di x
		+ \frac{1}{2} \int_\Omega \nabla \theta \cdot
		\hcm(\nabla \yst{k-1}, \tst{k-1}) \nabla \theta \di x                    \\
		       & -\int_\Omega h_\tau(\yst{k}, \yst{k-1}, \tst{k-1}) \theta \di x
		+ \frac{\kappa}{2} \int_\Gamma (\theta -      \btst k)^2 \di \haus^{d-1}
	\end{align*}
	is finite on $H^1_+(\Omega)$  and attains a unique minimizer $\tst k$ on $H^1_+(\Omega)$.
	Moreover, $\tst k$ satisfies
	\begin{align}\label{el_thermal_step}
		 & \int_\Omega \Bigg(
		\frac{\inten(\nabla \yst k, \tst k) - \inten(\nabla \yst{k-1}, \tst{k-1})}{\tau}
		  - h_\tau(\ysts{k}, \ysts{k-1}, \tsts{k-1})
		\Bigg) \vphi \di x \notag \\
		 & \quad + \int_\Omega
		\hcm(\nabla \yst{k-1}, \tst{k-1}) \nabla \tst k \cdot \nabla \vphi \di x
		+ \kappa \int_\Gamma (\tst k -       \btst k) \vphi \di \haus^{d-1} = 0
	\end{align}
	for any $\vphi \in H^1(\Omega)$.
\end{proposition}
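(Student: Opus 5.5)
We apply the direct method on the closed convex cone $H^1_+(\Omega)$. Each term of $\mathcal T$ is controlled using Theorem~\ref{thm:apriori_toten_velo_bound} together with Lemma~\ref{lem:pos_det}. The previous steps satisfy $\mechen(\yst{k}),\mechen(\yst{k-1})\le C$. Hence $\nabla\yst{k},\nabla\yst{k-1}$ are bounded in $L^\infty$ and $\det\nabla\yst{j}\ge 1/C$, uniformly in $k$ and $\tau$.

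\emph{Finiteness.} We start with the source term $h_\tau$ in \eqref{htaucase2}. By \eqref{bound_V1V2dissipation} and the $L^\infty$ bounds on the gradients, the first term $\tau^{-2}D^2(\nabla\yst{k-1},\nabla\yst{k},\tst{k-1})$ lies in $L^\infty(\Omega)$. For fixed $\tau$ its bound may depend on $\tau$, which is harmless here. By \eqref{eq: strange deriv} and \eqref{C_locally_lipschitz}, $|\partial_C\hat W^{\rm cpl}(C^{(k-1)}_\tau,\theta)|\le C$ whenever $\nabla\yst{k-1}$ is invertible and bounded. Thus $h_\tau\in L^\infty(\Omega)\subset L^2(\Omega)$, and $\int_\Omega h_\tau\theta\,\di x$ is finite and continuous on $H^1$. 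Next, \eqref{inten_lipschitz_bounds} gives $0\le\inten(\nabla\yst k,s)\le \aC s$. Since $\tst{k-1}\in L^2_+$, we also have $0\le\inten(\nabla\yst{k-1},\tst{k-1})\le \aC\tst{k-1}\in L^2$. Hence the first integrand is bounded by $C\tau^{-1}(\theta^2+\tst{k-1}\theta)$, which is integrable for $\theta\in H^1$. The conduction term is finite by \eqref{bound_hcm}, and the boundary term is finite by the trace theorem and $\btst k\in L^2(\Gamma)$.

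\emph{Coercivity, convexity, existence.} By \eqref{sec_deriv}, the map $\theta\mapsto\int_0^\theta\tau^{-1}\inten(\nabla\yst k,s)\,\di s$ is $C^1$ with derivative increasing at rate at least $\ac/\tau$. It is therefore strictly, indeed uniformly, convex, and bounded below by $\frac{\ac}{2\tau}\theta^2$. The conduction term is convex and bounded below by $\frac{c_M}{2}\|\nabla\theta\|_{L^2}^2$ by \eqref{bound_hcm}, and the boundary term is convex and nonnegative. The linear terms $-\tau^{-1}\int\inten(\nabla\yst{k-1},\tst{k-1})\theta$ and $-\int h_\tau\theta$ are absorbed with Young's inequality. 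This gives $\mathcal T(\theta)\ge c\|\theta\|_{H^1}^2-C_\tau$. A minimizing sequence is thus bounded in $H^1$ and converges weakly along a subsequence. The limit stays in the weakly closed cone $H^1_+$. Weak lower semicontinuity follows from convexity and strong continuity: use Fatou or dominated convergence for the integral term, via the quadratic bound and strong $L^2$ convergence. Strict convexity yields uniqueness. I do not expect any restriction on $\tau_0$ to be needed beyond the one from Proposition~\ref{prop:existence_mechanical_step}.

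\emph{Euler--Lagrange equation.} This is the main subtlety, because minimization takes place on the cone $H^1_+$ and not on all of $H^1$. My plan is to first show that the minimizer of $\mathcal T$ over all of $H^1(\Omega)$ is nonnegative. For this, extend $\inten(F,s)$ to $s<0$ by $\inten(F,s)\defas \ac s$, which preserves convexity and continuity of the functional. Then test the unconstrained Euler--Lagrange equation with $\vphi=-\theta^-$, where $\theta^-=\max\{-\theta,0\}$. On $\{\theta<0\}$ the integrand contributes $\tau^{-1}(\ac\theta-\inten(\nabla\yst{k-1},\tst{k-1}))(-\theta^-)\ge \tau^{-1}\ac|\theta^-|^2$. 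The source term contributes $\int h_\tau\theta^-$, and here $h_\tau$ has no sign because of the adiabatic part. So this naive argument may fail, and I would not rely on it.

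Instead, I would derive \eqref{el_thermal_step} directly on the cone. For $\vphi\in H^1$ with $\vphi\ge0$ and $\eps>0$, the function $\tst k+\eps\vphi$ lies in $H^1_+$, so the one-sided derivative gives "$\ge0$". To obtain equality one needs either $\tst k>0$, or test functions of the form $\vphi=\psi\,\tst k$ so that both signs of $\eps$ are admissible. If a sign issue remains, I would adopt the convention of \cite{RBMFMK}: there $h_\tau$ is replaced by its positive part, or one shows positivity using the bounds \eqref{C_bounds} together with \eqref{C_locally_lipschitz}. In the latter approach the adiabatic term $\partial_C\hat W^{\rm cpl}$ vanishes at $\theta=0$ by \ref{C_zero_temperature}, and is controlled by $C(\tst{k-1}\wedge1)$. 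Absorbing it then gives nonnegativity of the unconstrained minimizer for $\tau$ small. In that case the constraint is inactive, and differentiating along arbitrary $\vphi\in H^1$ yields \eqref{el_thermal_step}.
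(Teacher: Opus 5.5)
Your finiteness, coercivity, existence and uniqueness arguments are fine and agree with the paper. The gap is in the Euler--Lagrange part, and it sits exactly where you abandoned your first idea.

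When you test the unconstrained equation with $-\theta^-$, the first integrand on $\{\theta<0\}$ equals $\tau^{-1}\big(\ac(\theta^-)^2+\wst{k-1}\theta^-\big)$, where $\wst{k-1}=\inten(\nabla\yst{k-1},\tst{k-1})$. You dropped the second summand. If you keep it, and note that the conduction and boundary contributions are nonnegative (the latter because $\btst k\ge 0$), the coefficient of $\theta^-$ is $\tau^{-1}\wst{k-1}+h_\tau$, not $h_\tau$ alone. So everything reduces to the pointwise inequality $\tau^{-1}\wst{k-1}+h_\tau(\yst k,\yst{k-1},\tst{k-1})\ge 0$ a.e. The paper proves exactly this and uses it to show $\mathcal T(\theta)\ge\mathcal T(\theta^+)$ on all of $H^1(\Omega)$.

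None of your fallbacks supplies this inequality:
\begin{enumerate}
\item Testing on the cone only gives a variational inequality, or equality for a restricted class of test functions.
\item Replacing $h_\tau$ by its positive part changes the scheme, so it proves a different statement.
\item ``Absorbing'' the adiabatic term cannot work without $\tau^{-1}\wst{k-1}$. Where $|C^{(k)}_\tau-C^{(k-1)}_\tau|\ll\tau(\tst{k-1}\wedge 1)$, the quadratic part $\tau^{-2}D^2$ is dominated by the linear adiabatic part, so $h_\tau$ alone has no sign. Absorbing into $\ac\tau^{-1}(\theta^-)^2$ via Young leaves a remainder not multiplied by $\theta^-$, so you cannot conclude $\theta^-=0$.
\end{enumerate}

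The missing estimate is short. By \eqref{eq: strange deriv}, \eqref{C_locally_lipschitz} and \eqref{pos_det} (which gives a bounded $(\nabla\yst{k-1})^{-1}$), one has $|\partial_C\hat W^{\rm cpl}(C^{(k-1)}_\tau,\tst{k-1})|\le C(\tst{k-1}\wedge 1)$. Young's inequality and \eqref{bound_V1V2dissipation} then give
\[
\tau^{-1}\big|\partial_C\hat W^{\rm cpl}(C^{(k-1)}_\tau,\tst{k-1}):(C^{(k)}_\tau-C^{(k-1)}_\tau)\big|\le \ac\tau^{-1}(\tst{k-1}\wedge 1)^2+C\tau^{-1}D^2(\nabla\yst{k-1},\nabla\yst{k},\tst{k-1}).
\]
Since $(t\wedge 1)^2\le t$ and $\ac t\le\inten(F,t)$ by \eqref{inten_lipschitz_bounds}, the first term is at most $\tau^{-1}\wst{k-1}$. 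The second term is at most $\tau^{-2}D^2$ as soon as $\tau\le 1/C$.

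This is where the $\tau_0$ of the statement enters, contrary to your remark that no restriction beyond the mechanical step is needed. With this inequality, your test with $-\theta^-$ shows that the unconstrained minimizer is nonnegative. Hence it coincides with $\tst k$, and \eqref{el_thermal_step} follows for all $\vphi\in H^1(\Omega)$.
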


\begin{proof}
	We divide the proof into three parts. In the first step, we prove that $\mathcal{T}(\theta)<+\infty$ for $\theta \in H^1_+(\Omega)$.
	In Step 2, the direct method is employed to prove the existence of a unique  minimizer in $H_+^1(\Omega)$. Finally, \eqref{el_thermal_step} is derived in Step 3.  

	\textit{Step 1 (Finiteness):}
	We start by showing that all terms of $\mathcal{T}$ are well-defined and integrable for $\theta \in H^1_+(\Omega)$.
	First, by \eqref{inten_lipschitz_bounds} we find that
	\begin{equation}\label{ul}
		\int_0^\theta \inten(\nabla \yst k, s) \di s
		\in [\tfrac{\ac}{2} \theta^2, \tfrac{\aC}{2} \theta^2]   \quad  \text{ and }\quad  \int_0^\theta \inten(\nabla \yst{k-1}, \tst{k-1}) \di s \le \aC \theta \tst{k-1}
	\end{equation}
	a.e.~on $\Omega$   which both are integrable by H\"older's inequality.   By Lemma~\ref{lem:bound_hcm}, $\hcm(\nabla \yst{k-1}, \tst{k-1})$ is well-defined in $\Omega$, and the corresponding term   in   $\mathcal{T}$ is   integrable.
	Moreover, by  \eqref{eq: strange deriv},  \eqref{C_locally_lipschitz}, \eqref{bound_V1V2dissipation}, and   \eqref{pos_det} it holds that $h_\tau(\yst k, \yst {k-1}, \tst {k-1}) \in L^\infty(\Omega)$, i.e., the third term is also well-defined.
	Finally, a trace estimate and the regularity of  $\theta_{\flat,\tau}^{(k)}$ complete  the proof of the well-definedness of $\tempen$.

	\textit{Step 2 (Existence):}
	The functional is coercive on $H^1_+(\Omega)$ due to $  \int_0^\theta \inten(\nabla \yst k, s) \di s   \ge \frac{\ac}{2} \theta^2$ by \eqref{ul}, the estimate $\nabla \theta \cdot \hcm(\nabla \yst{k-1}, \tst{k-1}) \nabla \theta \ge c_M |\nabla \theta|^2$ by \eqref{bound_hcm},   and   the fact that all other terms are either nonnegative or linear in $\theta$.
	Moreover, the functional is weakly lower semicontinuous on $H^1_+(\Omega)$.
	Indeed, the first two terms of $\mathcal{T}$ are strictly convex in $\theta$ and $\nabla \theta$.
	The term related to $h_\tau$ is linear, and the weak lower semicontinuity of the term involving $ \btst k$ follows again by (strict) convexity and the weak continuity of the trace operator in $H^1(\Omega)$.
	This shows that a   minimizer $\tst k \in H_+^1(\Omega)$ exists.  Due to the strict convexity of $\mathcal{T}$, this minimizer is indeed unique.  

	\textit{Step 3 (Euler-Lagrange equation):}
	In order to prove (\ref{el_thermal_step}) for test functions $\vphi \in H^1(\Omega)$ which are not constrained to be nonnegative, we extend the minimization problem \eqref{thermal_step} to possibly negative functions $\theta \in H^1(\Omega)$ and show that $\tst k$ minimizes $\mathcal{T}$ on $H^1(\Omega)$.
	To this end, one needs to prove that for all $\theta \in H^1(\Omega)$ it holds
	$ \mathcal{T}(\theta) \ge \mathcal{T}(\theta^+)  $
	where   $\theta^+ \defas \max\setof{\theta, 0}$ denotes the nonnegative part of $\theta$.
	Due to the similarities to \cite[Proposition~3.8]{RBMFMK}, we only address the terms
	involving $h_\tau$ and $\wst{k-1} \defas \inten(\nabla \yst{k-1}, \tst{k-1})$.

	By Lemma \ref{lem:pos_det}, $\nabla \yst{k-1}$ is invertible at every point in $\Omega$.
	Hence,by  \eqref{eq: strange deriv}, \eqref{C_locally_lipschitz}, and \eqref{pos_det}    we derive
	\begin{align}
		\big|\pl_C \hat W^{\rm cpl}((\nabla \yst{k-1})^T \nabla \yst{k-1}, \tst{k-1})\big|
		 & =   \frac{1}{2}   \Big|
		(
		\nabla \yst{k-1})^{-1}
		\pl_F \cplpot(\nabla \yst{k-1}, \tst{k-1}
		)\Big| \nonumber                            \\
		 & \leq   C   |  (\nabla \yst{k-1})^{-1}  |
		(\tst{k-1} \wedge 1)
		(1 +   |   \nabla \yst{k-1}  |  ) \notag    \\
		 & \leq  C (\tst{k-1} \wedge 1),
		\label{partial_C_cplpot_bound}
	\end{align}
	  where the constant $C>0$ only depends on the one given in Theorem \ref{apriorimech}.     By  \eqref{inten_lipschitz_bounds},  \eqref{partial_C_cplpot_bound}, Young's inequality, \eqref{bound_V1V2dissipation}, and  the fact that $t \wedge 1 \leq \sqrt{t}$ for all $t \geq 0$     it follows that
	\begin{align}
		 & \quad \vert  \tau^{-1} \partial_C \hat{W}^{\rm cpl} \big( (\nabla \ysts{k-1})^T \nabla \ysts{k-1}, \tsts{k-1} \big) : \big( (\nabla \ysts{k} )^T \nabla \ysts{k} - (\nabla \ysts{k-1})^T \nabla \ysts{k-1} \big) \vert \notag \\
		 &
		\leq c_0\frac{(\tst{k-1} \wedge 1)^2}{\tau} + C \tau^{-1} D^2(\nabla \ysts{k-1}, \nabla \ysts{k}) \leq\frac{\wst{k-1}}{\tau} + C\tau^{-1} D^2(\nabla \ysts{k-1}, \nabla \ysts{k}) . \label{inequalitylater}
	\end{align}
	In view of \eqref{htaucase2}, we choose $\tau_0$ sufficiently small such that $ C \leq \tau^{-1}$ for $\tau \in (0,\tau_0)$  and thus $\tau^{-1}\wst{k-1} + h_\tau  (\yst{k}, \yst{k-1}, \tst{k-1}) \geq 0$ a.e.~on $\Omega$.
	From this we deduce  
	\begin{align*}
		-   \int_\Omega \Big(
		\frac{\wst {k-1}}{\tau}
		+ h_\tau(\yst{k}, \yst{k-1}, \tst{k-1})
		\Big) \, \theta \di x
		\ge - \int_\Omega \Big(
		\frac{\wst {k-1}}{\tau}
		+ h_\tau(\yst{k}, \yst{k-1}, \tst{k-1})
		\Big) \, \theta^+ \di x.
	\end{align*}
	As in \cite[(3.14)--(3.16)]{RBMFMK}, one can check that $\theta_+$ is a better competitor for the remaining terms in $\mathcal{T}$.
	By taking first variations, we obtain \eqref{el_thermal_step}.
\end{proof}

\begin{theorem}[Global thermal bounds]\label{thm:thermal}
	   Suppose that  $\tst 0, \ldots,   \tst{T / \tau}$ from Proposition \ref{prop:existence_thermal_step} exist. Then, for all $k \in \{   0, \ldots,   T/\tau   \} $ we  have that
	\begin{subequations}\label{aprioritemp}
		\begin{align}
			\sup_{k = 1,\ldots, T/\tau} \Vert \tst{k} \Vert_{L^1(\Omega)}+ \sum_{k=0}^{T/\tau} \tau
			\int_\Omega \big( \abs{\tst k}^q + \abs{\wst k}^q \big) \di x & \leq C, \label{temp_inten_Lq_bound}            \\
			\sum_{k=1}^{T/\tau} \tau
			\int_\Omega \big( \abs{\nabla \tst k}^r + \abs{\nabla \wst k}^r \big) \di x
			                                                              & \leq C , \label{nablatemp_nablainten_Lr_bound} \\
			\sum_{k = 1}^{T/\tau} \tau \norm{\ddif \wst k}_{(H^{d}(\Omega))^*}
			                                                              & \leq C \label{dot_temp_apriori_bound}
		\end{align}
	\end{subequations}
	for any $q \in [1,  \frac{d+2}{d})$ and $r \in [1, \frac{d+2}{d+1})$, where $\wst k \defas \inten(\nabla \yst k, \tst k)$ and $\ddif $ is the discrete difference, see \eqref{discretedifference}.  
\end{theorem}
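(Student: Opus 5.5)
The plan is to follow the Boccardo--Gallouët-type strategy from \cite[Section~3]{RBMFMK} (and \cite{MielkeRoubicek20Thermoviscoelasticity}). Compared to that argument, the only genuinely new input is the control of the source term $h_\tau$ from \eqref{htaucase2} in $L^1$, uniformly in $\tau$. This control should come from the mechanical bounds in Theorem~\ref{thm:apriori_toten_velo_bound} rather than from a combined energy balance.

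\textbf{Step 1: an $L^1$ bound on the source.} I first show $\sum_{k=1}^{T/\tau}\tau\|h_\tau(\yst k,\yst{k-1},\tst{k-1})\|_{L^1(\Omega)}\le C$. For the dissipative part, \eqref{bound_V1V2dissipation} gives
\[
\tau\cdot\tau^{-2}\int_\Omega D^2\,\di x=\tfrac{2}{\tau}\diss(\yst{k-1},\yst k,\tst{k-1}),
\]
and the sum of these quantities is bounded by \eqref{ineq:summedup} together with \eqref{mechfirstbound}. For the adiabatic part, \eqref{partial_C_cplpot_bound} and $\tst{k-1}\wedge1\le1$ bound it by $C\int_\Omega|(\nabla\yst k)^T\nabla\yst k-(\nabla\yst{k-1})^T\nabla\yst{k-1}|$. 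After summation with the factor $\tau\cdot\tau^{-1}$, Cauchy--Schwarz reduces this again to the dissipation sum plus $CT$.

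\textbf{Step 2: uniform $L^1$ bound on $\theta$.} Test \eqref{el_thermal_step} with $\vphi\equiv1$ and sum over $k$. This gives $\int_\Omega\wst k\le\int_\Omega \wst0+\sum\tau\|h_\tau\|_{L^1}+\kappa\sum\tau\int_\Gamma\btst l$. By \eqref{inten_lipschitz_bounds} and $\tst k\ge0$, this yields $\sup_k\|\tst k\|_{L^1}+\sup_k\|\wst k\|_{L^1}\le C$.

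\textbf{Step 3: gradient estimates.} I test \eqref{el_thermal_step} with $\vphi=1-(1+\wst k)^{-\eta}$, $\eta\in(0,1)$, which lies in $H^1$ with values in $[0,1)$. The source and boundary terms are then bounded by Step~1 and the data. The time-difference term is handled by convexity of $w\mapsto\int_0^w(1-(1+s)^{-\eta})\,\di s$, so that it telescopes. The diffusion term gives, by \eqref{bound_hcm},
\[
\sum_k\tau\int_\Omega(1+\wst k)^{-1-\eta}|\nabla \wst k|^2\le C.
\]
There is one technical point: $\nabla\vphi$ involves $\nabla\wst k$, while the diffusion acts on $\nabla\tst k$ with $\hcm$ at the previous step. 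I would write $\nabla\wst k=\partial_\theta\inten\,\nabla\tst k+\partial_F\inten\,\nabla^2\yst k$ and use \eqref{sec_deriv}, \eqref{C_locally_lipschitz} and $\nabla^2\yst k\in L^p$. The resulting cross terms are absorbed via Young's inequality.

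Combining this weighted gradient bound with the $L^\infty(L^1)$ bound through Gagliardo--Nirenberg interpolation, exactly as in \cite[Lemma~3.9]{RBMFMK}, gives \eqref{temp_inten_Lq_bound} and \eqref{nablatemp_nablainten_Lr_bound} for $q<\frac{d+2}{d}$ and $r<\frac{d+2}{d+1}$. The transfer between $\theta$ and $w$ follows from \eqref{inten_lipschitz_bounds} and the chain rule above.

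\textbf{Step 4: the time-derivative bound.} For \eqref{dot_temp_apriori_bound}, I read \eqref{el_thermal_step} as an identity for $\ddif\wst k$ tested with $\vphi\in H^d(\Omega)\subset W^{1,\infty}$ (up to the embedding threshold). Each term is then bounded by $\|\vphi\|_{W^{1,\infty}}$ times
\[
\|\nabla\tst k\|_{L^1}+\|h_\tau\|_{L^1}+\|\tst k\|_{L^1(\Gamma)}+\|\btst k\|_{L^1(\Gamma)},
\]
and these are summable by the previous steps.

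I expect Step~3 to be the main obstacle. The difficulty is the mismatch between $\hcm(\nabla\yst{k-1},\tst{k-1})$ and the $\nabla\yst k$-dependence hidden in $\wst k$, together with verifying that the truncation test function behaves well in the discrete-in-time setting. I would handle both by closely following the corresponding argument in \cite{RBMFMK}, since only the $L^1$ source bound of Step~1 differs.
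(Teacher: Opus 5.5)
Your proposal is correct and follows the paper's route. You get a space-time $L^1$ control of $h_\tau$ from the mechanical dissipation; the paper states this pointwise as $|h_\tau|\le C\tau^{-2}D^2+C$ and invokes \eqref{velocityfirstbound}, while you use the summed dissipation in \eqref{ineq:summedup} with Cauchy--Schwarz, which is equivalent. You then test with $\vphi\equiv1$ for the $L^1$ bound and use the Boccardo--Gallou\"et argument of \cite[Section~3.4]{RBMFMK}, which the paper only cites and you spell out.

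One small fix in Step~4: for $d=2$ one has $H^2(\Omega)\not\subset W^{1,\infty}(\Omega)$. There you should pair $\nabla\tst k\in L^r$ with $r>1$ from \eqref{nablatemp_nablainten_Lr_bound} against $\nabla\vphi\in L^{r'}$, rather than using $\|\nabla\tst k\|_{L^1}$.
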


\begin{proof}
	We first test \eqref{el_thermal_step} with $\varphi = 1 $ which lies in $ H^1(\Omega)$ for every $k = 1, \ldots, T /\tau$. Thus, by summing the resulting equations, we find
	\begin{align*}
		\frac{1}{\tau} \sum_{l=1}^{k} \int_\Omega \wst{l} \di x & \leq  \frac{1}{\tau} \sum_{l=1}^{k} \int_\Omega \wst{l-1} \di x + \sum_{l=1}^{k} \int_\Omega  h_\tau(\ysts{l}, \ysts{l-1}, \tsts{l-1})  \di x \\
		                                                                  & \ \ \ - \kappa \sum_{l=1}^{k}  \int_\Gamma (\tst l -       \btst l)  \di \haus^{d-1}.
	\end{align*}
	Recalling the definition of $\btst l$ below \eqref{thermal_step} and using
	$\tst l \geq 0$, we thus get
	\begin{align}\label{forl1estimate}
		\int_\Omega \wst{k} \di x \leq  \int_\Omega \wst{0} \di x + \tau \sum_{l=1}^{k} \int_\Omega  h_\tau(\ysts{l}, \ysts{l-1}, \tsts{l-1})  \di x+   \kappa    \int_\Gamma     \int_I \theta_\flat(t) \di t  \di \haus^{d-1} .
	\end{align}
	The first and last term on the right-hand side can be bounded by using \eqref{inten_lipschitz_bounds}, \eqref{initial_cond}, and the fact that $\bt \in  L^{2} (I; L^2_+(\Gamma))$.
	Recalling \eqref{htaucase2}, we use a variant of the first inequality in \eqref{inequalitylater} by additionally including the factorization $1 = \tau \tau^{-1}$ in Young's inequality, and obtain  
	\begin{align}\label{htaucase2check}
		\vert h_\tau(\ysts{l}, \ysts{l-1}, \tsts{l-1}) \vert  \leq C \tau^{-2} D^2(\nabla \ysts{l-1}, \nabla \ysts{l})    + C.
	\end{align}
	Thus, \eqref{forl1estimate}, \eqref{bound_V1V2dissipation}, \eqref{calculationhelpful}, \eqref{pos_det},  \eqref{velocityfirstbound}, and \eqref{inten_lipschitz_bounds} imply that $\sup_{k} \Vert \tst{k} \Vert_{L^1(\Omega) } \leq C$.
	The derivation of the remaining bounds is delicate as $h_\tau$ is only bounded in $L^1$. Thus, one needs to employ special test functions developed by Boccardo and Gallouët for parabolic equations with a measure-valued right-hand side. Yet, at this stage of the proof one can almost verbatim follow the lines of \cite[Section 3.4]{RBMFMK}. Indeed, the only difference lies in the exact form of $ h_\tau$, but  an inspection of the proof shows that we only employ a  bound of the form \eqref{htaucase2check} on this term. We omit details and refer to \cite[Lemma 3.19, Theorem 3.20]{RBMFMK}.  
\end{proof}

\section{Convergence of time-discrete solutions}\label{sec:convtimediscrete}
In this section, we prove that the time-discrete solutions converge to a weak solution in the sense of Definition~\ref{def:weak_formulation}.
  We do not provide all details of the proofs, but focus on highlighting the differences compared to   \cite[Section 4]{RBMFMK}.  
Throughout this section, we again only handle the case \ref{Case2-newDissipation} and \eqref{htaucase2}.  
The following convergences for the interpolations defined in \eqref{y_interpolations} essentially follow from weak compactness arguments.
\begin{lemma}[Compactness]\label{lem:uniform_def_conv}
	There exists $y  \in L^\infty(I; W^{2,p}(\Omega;\R^d)) \cap  H^1(I; H^1(\Omega; \R^d))$ with $y(0, \cdot) =  y_0 $ and $y(t) \in \Wid{t}$ for a.e.\ $t \in I$ such that, up to a subsequence (not relabeled), it holds that
	\begin{subequations}
		\begin{align}
			\ay        & \weaklystar y \text{ weakly* in } L^\infty(I; W^{2,p}(\Omega;\R^d))                             &  & \text{and} &
			\ay        & \weakly y \text{ weakly in }  H^1(I;H^1(\Omega;\R^d)), \label{uniform_def_conv1}                                  \\
			\nabla \ay & \to \nabla y \text{ in } L^\infty(I; L^\infty(\Omega;\R^{d \times d}))\label{uniform_def_conv2}
		\end{align}
	\end{subequations}
	as $\tau \to 0$.
	In the first convergence of \eqref{uniform_def_conv1} and in \eqref{uniform_def_conv2}, the same holds true if we replace $\ay$ by $\py$ or $\ny$.
	Moreover, there exists $\theta \in L^1(I; W^{1,1}(\Omega))$ with $\theta \ge 0$ a.e.\ in $I \times \Omega$ such that, up to a subsequence (not relabeled), it holds that
	\begin{subequations}\label{pointwise_temp_conv}
		\begin{align}
			\nt & \rightharpoonup \theta                                                                                               &   & \text{and} &
			\nw & \rightharpoonup w                                                                                                    &
			    & \text{weakly in } L^r(I; W^{1,r}(\Omega)) \text{ for any } r \in [1, \tfrac{d+2}{d+1}), \label{pointwise_temp_conv1}                                  \\
			\at & \to \theta                                                                                                           &   & \text{and} & \aw & \to w &
			    & \text{in } L^s(I \times \Omega)  \text{ for any } s \in [1,  \tfrac{d+2}{d}), \label{pointwise_temp_conv2}
		\end{align}
	\end{subequations}
	as $\tau \to 0$,
	where we define $\aw$, $\pw$ or $\nw$ as the interpolations of $\wst k = \inten(\nabla \yst k, \tst k)$, similarly to \eqref{y_interpolations}.
	The limit $w$ satisfies the identification $w = \inten(\nabla y, \theta)$ a.e.~in $I \times \Omega$, and the convergences
	in \eqref{pointwise_temp_conv2} also hold if we replace $\at$ with $\pt$ or $\nt$ and $\aw$ with $\pw$ or $\nw$, respectively.  
\end{lemma}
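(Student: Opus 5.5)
The plan is to split the argument into a mechanical part, which uses only the a priori bounds of Theorem~\ref{thm:apriori_toten_velo_bound}, and a thermal part, which uses the bounds of Theorem~\ref{thm:thermal} combined with an Aubin--Lions argument. This mirrors \cite[Section~4]{RBMFMK}. The only new ingredients are the time-dependent Dirichlet data and the fact that the velocity bound now comes from the frame-indifferent dissipation via \eqref{nonlinkornwithtimeboundary}.

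\textbf{Mechanical part.} By \eqref{mechfirstbound} and Lemma~\ref{lem:pos_det}, the interpolations $\ny,\py,\ay$ are bounded in $L^\infty(I;W^{2,p}(\Omega;\R^d))$, uniformly in $\tau$, and their determinants are bounded from below by $1/C$. By \eqref{velocityfirstbound}, $\partial_t\ay$ is bounded in $L^2(I;H^1(\Omega;\R^d))$. Banach--Alaoglu then yields the limit $y$ and the convergences \eqref{uniform_def_conv1} for a subsequence.

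For \eqref{uniform_def_conv2} I apply the Aubin--Lions--Simon lemma to the triple $W^{2,p}\subset\subset W^{1,\infty}\subset H^1$, using the bound on $\partial_t\ay$ in $L^2(I;H^1)$. This gives relative compactness of $\ay$ in $C(I;W^{1,\infty})$, since $W^{2,p}$ embeds compactly into $C^{1}$ for $p>d$. The differences $\ny-\ay$ and $\py-\ay$ are controlled in $L^\infty(I;H^1)$ by
\[
\max_k\|\yst k-\yst{k-1}\|_{H^1}\le \Big(\tau\sum_k\tau\|\ddif\yst k\|^2_{H^1}\Big)^{1/2}\le C\sqrt{\tau}.
\]
Interpolating against the uniform $W^{2,p}$ bound (Gagliardo--Nirenberg) upgrades this to $L^\infty(I;W^{1,\infty})$, so all three interpolations share the limit.

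The remaining properties of $y$ follow from the uniform convergence:
\begin{itemize}
\item $y(0)=y_0$;
\item the determinant bound passes to the limit, so $\det\nabla y>0$;
\item $y(t)=h(t)$ on $\Gamma_D$, because $\ny(t)=h(\tau\lceil t/\tau\rceil)$ on $\Gamma_D$ and $h$ is Lipschitz in time.
\end{itemize}
Together these give $y(t)\in\Wid{t}$.

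\textbf{Thermal part.} By \eqref{temp_inten_Lq_bound} and \eqref{nablatemp_nablainten_Lr_bound}, the sequences $\nt$ and $\nw$ are bounded in $L^r(I;W^{1,r}(\Omega))$, which is reflexive for $r>1$, so they converge weakly there. The case $r=1$ then follows from the cases $r>1$ on the bounded domain. By \eqref{dot_temp_apriori_bound}, $\partial_t\aw$ is bounded in $L^1(I;(H^d)^*)$. Aubin--Lions--Simon with $W^{1,r}\subset\subset L^1\subset (H^d)^*$ therefore gives strong convergence $\aw\to w$ in $L^1(I\times\Omega)$. Interpolating with the $L^q$ bound in \eqref{temp_inten_Lq_bound} for $q<\frac{d+2}{d}$ upgrades this to $L^s$ for every $s<\frac{d+2}{d}$.

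To replace $\aw$ by $\nw$, I estimate
\[
\|\nw-\aw\|_{L^1(I;(H^d)^*)}\le\tau\sum_k\tau\|\ddif\wst k\|_{(H^d)^*}\le C\tau .
\]
Combined with the spatial compactness coming from the uniform $L^r(W^{1,r})$ bound, this gives strong convergence of $\nw$ to the same limit. Hence $\nw\to w$ in $L^s(I\times\Omega)$, and likewise for $\pw$ (shifted by one step, with the initial datum under control).

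To identify $\theta$, I use that $\theta\mapsto\inten(F,\theta)$ is strictly increasing with $\partial_\theta\inten\in[\ac,\aC]$ by \eqref{sec_deriv}. Hence it has an inverse $\Theta(F,\cdot)$ that is Lipschitz in $w$ and continuous in $F$, and $\nt=\Theta(\nabla\ny,\nw)$. Since $\nabla\ny\to\nabla y$ uniformly and $\nw\to w$ in $L^s$, this gives $\nt\to\theta\defas\Theta(\nabla y,w)$ strongly. Consequently $w=\inten(\nabla y,\theta)$ and $\theta\ge0$. For $\at$ I use the same argument, exploiting that $\at$ is a convex combination of $\nt$ and $\pt$ and that $\pt\to\theta$ as well.

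\textbf{Main obstacle.} I expect the hard step to be transferring the time compactness from $\aw$ to the temperature interpolations, since the time-derivative bound is only in a weak dual norm with $L^1$ integrability in time. My first attempt would be the discrete Aubin--Lions lemma of Dreher--Jüngel, applied to piecewise constant functions in the form used in \cite[Lemma~4.1]{RBMFMK}. Everything else is routine.
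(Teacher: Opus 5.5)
Your proposal is correct and takes essentially the paper's route. The paper defers to \cite[Lemmas~4.1--4.2]{RBMFMK} on the basis of the same a priori bounds and only spells out the time-dependent Dirichlet condition, which you handle equivalently via $\ny(t)=h(\tau\lceil t/\tau\rceil)$ on $\Gamma_D$ and the Lipschitz continuity of $h$.

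One technical point needs care: $\aw$ is not bounded in $L^r(I;W^{1,r}(\Omega))$ on the first interval $(0,\tau)$, because $\wst{0}=\inten(\nabla y_0,\theta_0)$ with $\theta_0\in L^2_+(\Omega)$ only. So Aubin--Lions--Simon should be applied to the piecewise constant $\nw$ (Dreher--J\"ungel, as you anticipate) or on $[\delta,T]$, with $(0,\tau)$ handled by the uniform $L^1$ bound. The $L^1$-in-time integrability of the dual-norm derivative bound is not itself an obstruction, since Simon's lemma allows it.
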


\begin{proof}
	The statement is proved as in  \cite[Lemma~4.1, Lemma~4.2]{RBMFMK} using that the  time-discete approximations enjoy the same a priori bounds, compare \eqref{apriorimech} and \eqref{aprioritemp} with \cite[Lemma~3.18 and Theorem~3.20]{RBMFMK}.  
	Here, we highlight that we choose $\tau_0 \in (0,1)$ as the minimal $\tau_0$ of Proposition~\ref{prop:existence_mechanical_step} and Proposition~\ref{prop:existence_thermal_step}.  To show that $y(t) $ satisfies the  (time-dependent) boundary conditions  $h(t)$,  we use \eqref{uniform_def_conv1},  $\yst{l}\in \Wid{l\tau}$, \eqref{Yid},  and  \eqref{y_interpolations} to estimate
	\begin{align*}
		\Vert y - h \Vert_{L^1(I;L^1(\Gamma_D))} & \leq \liminf\limits_{\tau \to 0} \Vert \ay  - h \Vert_{L^1(I;L^1(\Gamma_D))}                                                                                                                                   \\
		                                         & \leq \liminf\limits_{\tau \to 0} \sum_{k=1}^{T/\tau} \int_{(k-1)\tau}^{k\tau} \int_{\Gamma_D}  \vert h((k-1)\tau) - h  (t) \vert +   \vert h(k\tau) - h (t)  \vert  \di \haus^{d-1}  \di t \\
		                                         & \leq \liminf\limits_{\tau \to 0} \sum_{k=1}^{T/\tau} \int_{(k-1)\tau}^{k\tau} \int_{\Gamma_D}  2 \int_{(k-1)\tau}^{k\tau}  \vert \partial_t h(s) \vert \di s  \di \haus^{d-1}  \di t                           \\
		                                         & \leq \liminf\limits_{\tau \to 0} 2 \tau  \Vert \partial_t h \Vert_{L^1(I;L^1(\Gamma_D))} \to 0.
	\end{align*}
	  This concludes the  proof.  
\end{proof}
The previous lemma suffices to pass to the limit in the time-discrete mechanical evolution.
\begin{proposition}[Convergence of the mechanical equation]\label{thm:vanishing_tau_mech_nonlinear}
	Let $(y,\theta)$ be as in Lemma \ref{lem:uniform_def_conv}.
	Then, for any   test function   $z \in C^\infty(I \times \overline{\Omega})$ with $z = 0$ on $I \times \Gamma_D$ we have that \eqref{weak_limit_mechanical_equation} holds.
\end{proposition}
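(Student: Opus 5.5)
We sum the time-discrete Euler--Lagrange equations \eqref{mechanical_step_single} against piecewise constant test functions and pass to the limit term by term, using the convergences of Lemma~\ref{lem:uniform_def_conv}. Given $z \in C^\infty(I\times\overline\Omega)$ with $z=0$ on $I\times\Gamma_D$, we set $z_\tau^{(k)} \defas \tau^{-1}\int_{(k-1)\tau}^{k\tau} z(t)\di t \in \Wzero$. We test \eqref{mechanical_step_single} with $z_\tau^{(k)}$, multiply by $\tau$ and sum over $k$. This yields an identity of the form
\begin{align*}
\intQ \Big(\pl_F \felpot(\nabla\ny,\pt) : \nabla z + \pl_G\hypot(\nabla^2\ny)\cdddot\nabla^2 z\Big)\di x\di t + \mathcal{V}_\tau = \int_I \langle \ell_\tau(t), z(t)\rangle \di t,
\end{align*}
where $\ell_\tau$ is the piecewise constant interpolation of $\lst{k}$ and $\mathcal{V}_\tau$ is the viscous term. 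Since $\ell_\tau$ and the piecewise averages of $z$ differ only by averaging, the test functions may be replaced by $z$ itself up to errors that vanish as $\tau\to0$.

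\emph{Elastic and forcing terms.} By \eqref{uniform_def_conv2}, $\nabla\ny\to\nabla y$ uniformly, and Lemma~\ref{lem:pos_det} gives $\det\nabla\ny\ge c>0$ uniformly. Together with $\pt\to\theta$ in $L^s$ from \eqref{pointwise_temp_conv2}, the continuity of $\pl_F\felpot$ and the growth bound \eqref{C_locally_lipschitz} yield convergence of $\pl_F\felpot(\nabla\ny,\pt)$ in $L^1$ by dominated convergence. The forcing terms converge because $f$ and $g$ are $W^{1,1}$ in time.

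\emph{Hessian term.} This is handled as in \cite[Proposition~4.3]{RBMFMK}. The term $\pl_G\hypot(\nabla^2\ny)$ is bounded in $L^{p/(p-1)}$, so it has a weak limit, and we identify that limit with $\pl_G\hypot(\nabla^2 y)$ via a Minty-type argument based on the convexity \ref{H_regularity}. For this we test the discrete equation with $\ny - y$, suitably corrected by the boundary datum $h$ so that it lies in $\Wzero$. This gives $\limsup \intQ \pl_G\hypot(\nabla^2\ny)\cdddot\nabla^2(\ny-y)\le 0$, using that all remaining terms converge because $\nabla(\ny-y)\to0$ uniformly and the viscous term is bounded in $L^2$. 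The boundary condition being time dependent requires only this correction by $h$.

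\emph{Viscous term.} This is the main difference from \cite{RBMFMK} and the main obstacle. Writing $C_k\defas C^{(k)}_\tau$, the discrete viscous term reads
\[
-\tau^{-1}V(C_{k-1},\tst{k-1})\big[2\,\sym((\nabla z)^T\nabla\yst k),\, C_k-C_{k-1}\big].
\]
By \eqref{calculationhelpful},
\[
\tau^{-1}(C_k-C_{k-1}) = 2\,\sym\big((\nabla\yst{k-1})^T\ddif\nabla\yst k\big) + \tau\,(\ddif\nabla\yst k)^T\ddif\nabla\yst k.
\]
The remainder is of order $\tau|\ddif\nabla\yst k|^2$. Since \eqref{velocityfirstbound} gives $\tau\sum_k\|\ddif\nabla\yst k\|_{L^2}^2\le C$, the remainder contributes $O(\tau)$ in $L^1(I\times\Omega)$, and $\nabla z$ is bounded. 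The leading part converges in two ways. First, $\pl_t\nabla\ay \weakly \pl_t\nabla y$ weakly in $L^2$. Second, $V(\underline C_\tau,\pt)\to V(C,\theta)$ in measure and boundedly, while $\nabla\ny,\nabla\py\to\nabla y$ uniformly. Their product therefore converges to $\intQ \pl_{\dot F}\disspot(\nabla y,\pl_t\nabla y,\theta):\nabla z$ by \eqref{viscousstresstensor} and the symmetry in \ref{D_quadratic}. Weak convergence of the strain rates suffices here because the viscous term is linear in the rate. The only delicate point is to check that the quadratic remainder is really controlled by \eqref{velocityfirstbound}. Adding up the limits of all terms gives \eqref{weak_limit_mechanical_equation}.
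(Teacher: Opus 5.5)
Your proposal is correct and follows essentially the same route as the paper: you sum the discrete Euler--Lagrange equations, split $\tau^{-1}(C^{(k)}_\tau - C^{(k-1)}_\tau)$ via \eqref{calculationhelpful} into a part linear in the rate (handled by weak--strong convergence) plus a vanishing quadratic remainder, and identify the Hessian term by Minty's trick with a test function corrected by $h$. The differences are cosmetic: you bound the remainder by $O(\tau)$ using \eqref{velocityfirstbound} instead of using $\|\nabla\ny - \nabla\py\|_{L^\infty}\to 0$, and you run Minty with an explicit weak limit of $\pl_G\hypot(\nabla^2\ny)$ instead of the abstract operator $\mathbf{H}$ on $X$.
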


\begin{proof}
	The proof relies on \cite[Proof of Proposition 5.1, Step 2]{MielkeRoubicek20Thermoviscoelasticity} and \cite[Proposition~4.4]{RBMFMK}. Indeed,  the only  difference lies in the choice of the viscous term  in  the  time-discrete problem
	\eqref{mechanical_step_single}, compare \eqref{htaucase2} with \eqref{htaucase1}.  
	Compared to \cite{RBMFMK,MielkeRoubicek20Thermoviscoelasticity}, we give slightly more details on the application of Minty's trick for monotone operators which allows to handle the nonlinearity induced by $\partial_G H(\nabla^2 y_\tau^{(k)})$ in the limiting passage, see \eqref{mechanical_step_single}.
	To this end, on $ X \defas L^2(I;  \Wzero )$ we consider the functional  $\mathbf{H}$ defined by
	\begin{equation*}
		\langle \mathbf{H}(w), z\rangle = \intQ \pl_G \hypot\big(\nabla^2 w(t) + \nabla^2 h(t)\big) \cdddot \nabla^2 z  \di x \di t \qquad \text{for } w,z \in X.
	\end{equation*}
	  The incorporation of boundary conditions in $X$ is necessary as test functions in the weak formulation \eqref{weak_limit_mechanical_equation} require zero Dirichlet boundary conditions.   The addend  $\nabla^2 h$ ensures that the function $w(t) + h(t)$ lies in $\mathcal{Y}_{h(t)}$ for $t \in I$.       
	By Proposition \ref{prop:existence_mechanical_step}, there exists $b_{\tau} \in X^*$ defined via summation of \eqref{mechanical_step_single} for $k = 1,\ldots, T /\tau  $   and multiplication with $\tau$ such that
	$\langle \mathbf{H}(\ny - \overline{h}_\tau),z\rangle = \langle b_{\tau}, z \rangle$
	for all $z \in X $, where $\overline{h}_\tau$ denotes the piecewise constant-in-time approximation of $h$ (cf.\ \eqref{y_interpolations}) such that $\ny - \overline{h}_\tau \in X$.  (The notation is unrelated to the one in \eqref{htaucase2}.)  
	Defining $b \in X^*$ as  
	\begin{align*}
		\langle b,z\rangle & \defas     \intQ f \cdot z \di x \di t
		+   \int_I \int_{\Gamma_N} g \cdot z \di \haus^{d-1} \di t \\
		                   & \qquad - \intQ   \Big(
		\pl_F \felpot(\nabla y, \theta)
		+ \pl_{\dot F} \disspot(\nabla y, \nabla \partial_t y, \theta)
		\Big) : \nabla z \di x \di t ,
	\end{align*}
	  our goal is to  show that $b$ satisfies $ \langle \mathbf{H}(y-h),z\rangle = \langle b,z\rangle $
	for all $z \in X$,
	where $y$ is the limit from Lemma~\ref{lem:uniform_def_conv}. This   then implies that
	\eqref{weak_limit_mechanical_equation} holds (even for a larger class of test functions).

	Let us verify the assumptions for Minty's trick. $\mathbf{H}$ is a hemicontinuous and a monotone operator as $\hypot$ is convex due to \ref{H_regularity}.
	By Lemma~\ref{lem:uniform_def_conv}, it holds that $\ny \rightharpoonup y$ weakly in $X$.
	As $\langle \mathbf{H}(\ny - \overline{h}_\tau),z\rangle = \langle b_{\tau}, z \rangle$ for all $z \in X$,  we need to show that
	$b_{\tau} \weaklystar  b$ weakly* in $X^*$, and $\langle b_{ \tau} , \ny - \overline{h}_\tau \rangle \to \langle b , y - h \rangle $.
	For the terms involving $f$, $g$, and $\partial_F W$, we refer to \cite{RBMFMK,MielkeRoubicek20Thermoviscoelasticity},
	and we only prove convergence of the viscous part of $\langle b_\tau,z \rangle$,
	which we denote by
	\begin{align*}
		 & \langle b_\tau^{\rm vis} , z \rangle \defas  \int_I  \int_\Omega
		  \tau^{-1} V\big( \underline{C}_\tau, \pt \big) \Big[(  \nabla z  )^T  \nabla \ny +   (\nabla \ny)^T\nabla z  ,   \overline{C}_\tau  - \underline{C}_\tau \Big]  \di x    \di t,
	\end{align*}
	where we use $\underline{C}_\tau$ as the notation for the piecewise constant-in-time  approximation  similarly to \eqref{y_interpolations}.  Then, \eqref{calculationhelpful} and the triangle inequality yield
	\begin{align*}
		 & \quad \ \Bigg\vert \intQ \partial_{\dot F} R (\nabla y, \nabla  \partial_t y , \theta) : \nabla z \di x \di t - \langle b_\tau^{\rm vis} , z \rangle  \Bigg\vert                                                            \\
		 & \leq  \Bigg\vert \intQ \partial_{\dot F} R (\nabla y, \nabla \partial_t y , \theta) : \nabla z \di x \di t  - \int_I \int_\Omega
		V(  \underline{C}_\tau, \pt ) \Big[(  \nabla z )^T  \nabla \ny +   (\nabla \ny)^T\nabla z  ,                                                                                                                                     \\
		 & \qquad \qquad \qquad  \quad   \qquad \qquad  \qquad \qquad \qquad  \qquad \qquad \quad \qquad \left(\partial_t \nabla \ay \right)^{T} \nabla \py + (\nabla \py)^{T}\left(\partial_t \nabla \ay  \right)\Big] \di x   \di t \Bigg\vert \\
		 & \quad \quad \ \ + \Bigg\vert \int_I \int_\Omega
		V( \underline{C}_\tau, \pt  ) \Big[(  \nabla z )^T  \nabla \ny +   (\nabla \ny)^T\nabla z ,  \left(\partial_t \nabla \ay \right)^{T} \big(\nabla \ny-\nabla \py \big)  \Big]  \di x \di t \Bigg\vert .
	\end{align*}
	We show that both terms on the right-hand side converge to zero as $\tau \to 0$.
	As in \eqref{diss_rate},   \eqref{viscousstresstensor} and \ref{D_quadratic} yield
	\begin{align}\label{identityviscousstress}
		\pl_{\dot F} \disspot(F, \dot F, \theta) : G
		= V(F^TF, \theta) [ G^T F + F^T G  , \dot F^T F + F^T \dot F ] \quad \text{for } F, \dot F, G \in \R^{d \times d}.
	\end{align}
	By \eqref{identityviscousstress}, weak convergence of $(  \partial_t  \hat{y})_\tau$ in $L^2(I; H^1(\Omega; \R^{d \times d}))$ (see \eqref{uniform_def_conv1}), convergence of $(\nabla \ny)_\tau, (\nabla \py)_\tau$  on   $I \times \Omega$ (see \eqref{uniform_def_conv2}),   and pointwise a.e.~convergence of $(\pt)_\tau$   on  $I \times \Omega$ (up to a subsequence, see \eqref{pointwise_temp_conv2}), we get that the first term converges to zero.
	Using additionally that $\Vert \nabla \ny-\nabla \py  \Vert_{L^\infty(I ; L^\infty(\Omega; \R^{ d \times d}))} \to 0$, the second term also converges to zero.  Summarizing, we have checked that $b_{\tau} \weaklystar  b$ weakly* in $X^*$.

	Eventually, due to uniform convergence of the gradients and  $\overline{h}_\tau \to h$ in $L^\infty(I; W^{2,p}(\Omega;\R^d))$, we also obtain the convergence $\langle b_{ \tau} , \ny - \overline{h}_\tau \rangle \to \langle b , y - h \rangle $.  This concludes the proof.
\end{proof}

For the limit passage in the time-discrete heat equation, we will need   strong convergence of the strain rates $(\nabla  \partial_t \hat{y}  )_\tau$ in $L^2(I; L^2(\Omega;\R^{d \times d}))$ since the dissipation rate $\drate(\nabla \py, \nabla \partial_t \hat{y}, \pt)$ is quadratic in $\nabla \partial_t \hat{y}$.
Note that our a priori bounds currently only guarantee weak convergence.
The next lemma improves this convergence:
\begin{lemma}[Strong convergence of the strain rates]\label{lem:strong_strain_rates_conv}
	For $y$ as in Lemma \ref{lem:uniform_def_conv}, we have that, up to taking a subsequence,
	\begin{equation*}
		  \partial_t \hat{y} \to \partial_t y \text{ strongly in } L^2(I; H^1(\Omega; \R^d)) \text{ as } \tau \to 0.
	\end{equation*}
\end{lemma}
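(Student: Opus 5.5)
We follow the strategy of \cite[Lemma~4.5]{RBMFMK} and \cite[Proposition~5.1, Step 3]{MielkeRoubicek20Thermoviscoelasticity}: convert the weak convergence of the strain rates into strong convergence by a $\limsup$ inequality for the viscous dissipation, obtained by testing the time-discrete mechanical equation \eqref{mechanical_step_single} with the discrete velocity and the limit equation \eqref{weak_limit_mechanical_equation} with $\partial_t y$. Since the time-dependent Dirichlet data prevent these from being admissible test functions, we test instead with $z=\ysts{k}-\ysts{k-1}-(h(k\tau)-h((k-1)\tau))$, which vanishes on $\Gamma_D$, and with $\partial_t y-\partial_t h$ in the limit. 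The contributions of $h$ are lower order because $h\in W^{1,\infty}(I;W^{2,p})$ and all other quantities are bounded by \eqref{apriorimech}.

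\emph{Step 1.} Summing \eqref{mechanical_step_single} over $k$ and multiplying by $\tau$ gives an identity for
\begin{align*}
  \mathcal{V}_\tau\defas\int_I\int_\Omega \tau^{-2}V(\underline{C}_\tau,\pt)\big[\overline C_\tau-\underline C_\tau,\overline C_\tau-\underline C_\tau\big]\di x\di t .
\end{align*}
To obtain it, we use that $(\nabla z)^T\nabla\ysts{k}+(\nabla\ysts{k})^T\nabla z=C^{(k)}_\tau-C^{(k-1)}_\tau+(\nabla z)^T\nabla z$ by \eqref{calculationhelpful}; the extra quadratic term is $O(\tau^2|\ddif\nabla y|^2)$ times bounded factors and vanishes after dividing by $\tau$ and summing, by \eqref{velocityfirstbound} and the $L^\infty$ convergence $\nabla\ny-\nabla\py\to0$. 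On the right-hand side, the term $\pl_G H(\nabla^2\ysts{k})\cdddot\nabla^2(\ysts{k}-\ysts{k-1})$ is handled by convexity of $H$, which telescopes to $\int H(\nabla^2\yst{T/\tau})-\int H(\nabla^2 y_0)$ up to the $h$-correction. Weak lower semicontinuity then gives an upper bound for $\limsup\mathcal V_\tau$.

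The $\pl_F W^{\rm el}$ term is a chain rule along the affine interpolation. By the uniform convergence \eqref{uniform_def_conv2} and the determinant bound \eqref{pos_det}, it converges to $\int W^{\rm el}(\nabla y(T))-\int W^{\rm el}(\nabla y_0)$. The terms $\pl_F W^{\rm cpl}(\nabla\ny,\pt):\nabla\partial_t\ay$ converge by weak--strong pairing, using \eqref{C_locally_lipschitz}, pointwise convergence of $\pt$ from \eqref{pointwise_temp_conv2}, and dominated convergence. The forces converge by weak convergence in $H^1(I;H^1)$. Comparing with the limit energy identity, which follows from \eqref{weak_limit_mechanical_equation} tested with $\partial_t y-\partial_t h$ via a density argument and the chain rule for the convex $H$ on $L^\infty(W^{2,p})\cap H^1(H^1)$ as in \cite{MielkeRoubicek20Thermoviscoelasticity}, we get
\begin{align*}
  \limsup_{\tau\to0}\mathcal V_\tau\le \intQ V(\nabla y^T\nabla y,\theta)[\partial_t C,\partial_t C]\di x\di t,\qquad \partial_t C=\nabla\partial_t y^T\nabla y+\nabla y^T\nabla\partial_t y .
\end{align*}

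\emph{Step 2.} From this $\limsup$ inequality we pass to strong convergence. Set $\dot C_\tau\defas\tau^{-1}(\overline C_\tau-\underline C_\tau)$. By \eqref{calculationhelpful}, $\dot C_\tau=2\sym(\nabla\py^T\nabla\partial_t\ay)+O(|\nabla\ny-\nabla\py|\,|\nabla\partial_t\ay|)$, so $\dot C_\tau\weakly\partial_t C$ in $L^2$. Uniform positivity \ref{D_bounds}, together with $V(\underline C_\tau,\pt)\to V(C,\theta)$ a.e. and boundedly, gives
\begin{align*}
  \intQ V(\underline C_\tau,\pt)[\dot C_\tau-\partial_t C,\dot C_\tau-\partial_t C]\di x\di t\to0 ,
\end{align*}
after expanding and using the $\limsup$ bound together with weak--strong convergence for the cross terms. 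Hence $\sym(\nabla\py^T\nabla\partial_t\ay)\to\sym(\nabla y^T\nabla\partial_t y)$ strongly in $L^2$. Finally, we apply the Korn-type estimate \eqref{pompewithtimeboundary}; more precisely, we apply its linear version with the frozen field $\nabla\py$, uniform in $\mu$, to $\partial_t\ay-\partial_t y$ minus the boundary lift. Since $\nabla\py\to\nabla y$ uniformly, this upgrades the convergence to $\nabla\partial_t\ay\to\nabla\partial_t y$ in $L^2$. Together with Poincaré's inequality, this yields the claim.

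\emph{Main obstacle.} The hardest part is Step 1, specifically the handling of the staggered coupling term and the time-dependent boundary data in the energy comparison: the temperature in the discrete equation is lagged, and we need the limit energy identity to hold with equality. We would first try to establish the chain rule for $\int H(\nabla^2 y)$ in the limit rigorously, arguing as in \cite{RBMFMK}.
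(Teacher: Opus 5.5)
Your proposal is correct and follows essentially the paper's route. You test \eqref{mechanical_step_single} with the discrete velocity minus the boundary lift, and show that the correction coming from \eqref{calculationhelpful} vanishes because $\|\nabla\ny-\nabla\py\|_{L^\infty(I\times\Omega)}\to0$. You then compare with the limit energy balance obtained by testing with $\partial_t y-\partial_t h$, and conclude via uniform positivity of $V$ and a generalized Korn inequality, which is the step the paper delegates to \cite[Lemma~4.5]{RBMFMK}. Keeping the nonnegative form $\tau^{-2}V(\underline C_\tau,\pt)[\overline C_\tau-\underline C_\tau,\overline C_\tau-\underline C_\tau]$, and using convexity of $\hypot$ plus a Taylor expansion of $W^{\rm el}$ instead of the $\Lambda$-convexity of $\mechen$, are only cosmetic differences.

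One correction: the boundary terms are not lower order in the sense of vanishing or being controlled by boundedness alone. They must converge to the corresponding limit terms (the paper's $\Phi_h$). For $\intQ \pl_G\hypot(\nabla^2\ny)\cdddot\partial_t\nabla^2\hat h_\tau\di x\di t$, this requires identifying the weak $L^{p'}$-limit of $\pl_G\hypot(\nabla^2\ny)$ as $\pl_G\hypot(\nabla^2 y)$. That identification is obtainable from the Minty argument in Proposition~\ref{thm:vanishing_tau_mech_nonlinear}.
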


\begin{proof}
	The proof essentially follows from  \cite[Proposition~5.1, Proposition~6.4]{MielkeRoubicek20Thermoviscoelasticity} and \cite[Lemma~4.5]{RBMFMK}, additionally taking the time dependent boundary conditions into account. We provide the main steps and highlight the main difference to \cite[Lemma~4.5]{RBMFMK}:
	as we work purely with the mechanical equation, we mainly need to take care of the new time-discrete version of the dissipation in \eqref{mechanical_step_single}.
	First, in the time-continuous setting, one derives the energy balance
	\begin{equation}\label{cont_energy_balance}
		\begin{aligned}
			 & \quad \ \mechen(y(T))
			\hspace{-0.05cm} +   \hspace{-0.05cm} 2 \int_I    \int_\Omega R (\nabla y , \nabla\partial_t y , \theta )  \di x\di t \\& =  \mechen  ( y_{0}   )
			\hspace{-0.05cm}  +  \hspace{-0.1cm}  \int_I   \langle \ell(t),\partial_t y  \rangle \di t    \hspace{-0.05cm} -  \hspace{-0.1cm} \int_I  \int_\Omega
			\pl_F \cplpot(\nabla y , \theta ) : \nabla\partial_t y  \di x \di t + \Phi_h,  
		\end{aligned}
	\end{equation}
	where the term $\Phi_h$ is related to the boundary condition $h$ and takes the form  
	\begin{align}\label{Dhlimit}
		\hspace{-0.2cm} \Phi_h & =
		  \int_I \int_\Omega V( (\nabla y)^T \nabla y, \theta ) \Big[( \partial_t \nabla  h)^T   \nabla y +   (\nabla y)^T  \partial_t \nabla  h ,  ( \partial_t \nabla y)^T  \nabla y +   (\nabla  y)^T    \partial_t \nabla y \Big]  \di x \di t \\
		                       & \quad  + \int_I \int_\Omega \pl_F \cplpot(\nabla y, \theta)
		: \partial_t \nabla  h + \pl_G \hypot(\nabla^2 y)
		\cdddot \partial_t \nabla^2  h
		+\pl_F \elpot(\nabla y ) : \partial_t \nabla  h \di x \di t - \int_I  \langle \ell(t), \partial_t h \rangle \di t . \notag
	\end{align}
	This follows by testing the balance of momentum in \eqref{weak_limit_mechanical_equation} derived in Proposition \ref{thm:vanishing_tau_mech_nonlinear} with $\partial_t {y} - \partial_t h  \in L^2(I; H^1_0(\Omega;\R^d) )$,  employing \eqref{diss_rate} and \eqref{identityviscousstress},  and using a chain rule for the $\Lambda$-convex functional  $\mathcal{M}$, see \cite[Proposition 3.6]{MielkeRoubicek20Thermoviscoelasticity}.
	Our next goal is to show a similar balance in the time-discrete setting.   Using the notation $h^{(k)}_\tau =  h(k\tau)$, which is unrelated to the one in \eqref{htaucase2}, the test function $ \tau (\ddif \yst k - \ddif h^{(k)}_\tau)$ is admissible in the Euler-Lagrange equation \eqref{mechanical_step_single} of the $k$-th mechanical step, which gives  
	\begin{align}\label{energy_balance_discrete_stepk}
		 & 0 = - \int_\Omega V(C^{(k-1)}_\tau, \tst{k-1}) [( \ddif \nabla \yst k)^T  \nabla \yst{k} +   (\nabla \yst{k})^T  \ddif \nabla \yst k , C^{(k)}_\tau  - C^{(k-1)}_\tau ] \di x \notag \\
		 & \qquad  + \tau   \langle \lst k, \ddif \yst k \rangle
		- \tau \int_\Omega \pl_F \cplpot(\nabla \yst k, \tst{k-1})
		: \ddif \nabla \yst k \di x \notag                                                                                                                                                                \\
		 & \qquad \quad - \tau \int_\Omega \pl_G \hypot(\nabla^2 \yst k)
		\cdddot \ddif \nabla^2 \yst k
		-\pl_F \elpot(\nabla \yst k) : \ddif \nabla \yst k   \di x + \Phi_{h,\tau}^{(k)},  
	\end{align}
	where $\Phi_{h,\tau}^{(k)}$ consists of every term of the right-hand side with $\ddif \yst k $ replaced by $\ddif h^{(k)}_\tau$, i.e.,
	\begin{align*}
		\hspace{-0.2cm} \Phi_{h,\tau}^{(k)} & =
		\int_\Omega V(C^{(k-1)}_\tau, \tst{k-1}) [( \ddif \nabla  h^{(k)}_\tau)^T  \nabla \yst{k} +   (\nabla \yst{k})^T  \ddif \nabla  h^{(k)}_\tau ,  C^{(k)}_\tau  - C^{(k-1)}_\tau ] \di x   - \tau   \langle \lst k, \ddif h^{(k)}_\tau \rangle \\
		                                    & \quad  + \tau \int_\Omega \pl_F \cplpot(\nabla \yst k, \tst{k-1})
		: \ddif \nabla   h^{(k)}_\tau + \tau  \pl_G \hypot(\nabla^2 \yst k)
		\cdddot \ddif \nabla^2  h^{(k)}_\tau
		+\pl_F \elpot(\nabla \yst k) : \ddif \nabla  h^{(k)}_\tau  \di x . \notag
	\end{align*}
	By the $\Lambda$-convexity of  $\mathcal{M}$ (see \cite[Lemma~3.8]{BadalSchwarz}, \ref{H_regularity}, and  \eqref{apriorimech}),   we can find $\Lambda > 0$   independent of  $\tau$  and $k$ such that
	\begin{align*}
		\mechen(\yst {k-1})
		 & \ge \mechen(\yst k)
		- \Lambda \norm{\nabla \yst{k-1} - \nabla \yst k}_{L^2(\Omega)}^2
		+ \int_\Omega \pl_G \hypot(\nabla^2 \yst k)
		\cdddot (\nabla^2 \yst {k-1} - \nabla^2 \yst k) \di x           \\
		 & \phantom{\ge}\quad + \int_\Omega \pl_F \elpot(\nabla \yst k)
		: (\nabla \yst {k-1} - \nabla \yst k) \di x.
	\end{align*}  
	  Writing $V_{k-1} \defas V(C^{(k-1)}_\tau,  \tst{k-1})$ for shorthand, by \eqref{energy_balance_discrete_stepk} and \eqref{calculationhelpful} we thus derive
	\begin{align*}
		     & \mechen(\yst k) - \mechen(\yst{k-1})
		- \Lambda \tau^2 \norm{\ddif \nabla \yst k}^2_{L^2(\Omega)}                                                                                                                                                                                                \\
		     & +  \tau  \int_\Omega V_{k-1} \Big[( \ddif \nabla \yst k)^T  \nabla \yst{k} +   (\nabla \yst{k})^T  \ddif \nabla \yst k ,  ( \ddif \nabla \yst k)^T  \nabla \yst{k-1}  +   (\nabla  \yst{k-1})^T  \ddif \nabla \yst k\Big] \di x \\
		     & +   \tau \int_\Omega V_{k-1} \Big[( \ddif \nabla \yst k)^T  \nabla \yst{k} +   (\nabla \yst{k})^T  \ddif \nabla \yst k ,   ( \ddif \nabla \yst k )^T  \big(  \nabla \yst k - \nabla \yst {k-1} \big) \Big] \di x                                    \\
		\leq & \, \tau   \langle \lst k, \ddif \yst k \rangle
		- \tau \int_\Omega \pl_F \cplpot(\nabla \yst k, \tst{k-1}) : \ddif \nabla \yst k \di x    + \Phi_{h,\tau}^{(k)}.
	\end{align*}
	Summing the above inequality over $k \in \setof{1, \ldots,  T/\tau}$ we arrive at a discrete analog of \eqref{cont_energy_balance}, namely,
	\begin{align*}
		 & \mechen(\ny(T))
		-\Lambda \tau \int_I \int_\Omega
		\abs{\nabla \partial_t  \hat{y}_\tau }^2 \di x \di t \notag                                                                                                                                                                                                                                                                                                                                                           \\
		 & +  \int_I  \int_\Omega V(\underline{C}_\tau, \underline{\theta}_\tau ) \Big[( \nabla \partial_t \hat{y}_\tau)^T   \nabla \overline{y}_\tau +   ( \nabla \overline{y}_\tau)^T \nabla   \partial_t \hat{y}_\tau, ( \nabla \partial_t \hat{y}_\tau)^T  \nabla \underline{y}_\tau +   ( \nabla \underline{y}_\tau )^T \nabla \partial_t \hat{y}_\tau \Big] \di x \di t  \notag \\
		 & +   \int_I \int_\Omega V(\underline{C}_\tau, \underline{\theta}_\tau ) \Big[( \nabla \partial_t \hat{y}_\tau)^T   \nabla \overline{y}_\tau +   ( \nabla \overline{y}_\tau)^T  \nabla \partial_t \hat{y}_\tau , ( \nabla \partial_t \hat{y}_\tau)^T  \big(   \nabla \overline{y}_\tau -  \nabla \underline{y}_\tau \big) \Big] \di x \di t  \notag                                                   \\
		 & \quad \leq  \mechen   ( y_ {0}   )
		+  \int_I \langle \ell(t), \partial_t \hat{y}_\tau \rangle \di t
		- \int_I \int_\Omega \pl_F \cplpot(\nabla \ny, \pt) : \nabla \partial_t \hat{y}_\tau \di x \di t + \Phi_{h,\tau},  
	\end{align*}
	where in the integral for the   force   terms we used the definition in \eqref{forces_mech_step}, and as before $\underline{C}_\tau$ is defined analogously to $\underline{y}_\tau$ using the discretizations $(C^{(k)}_\tau)_k$, see \eqref{y_interpolations}. We also set $\Phi_{h,\tau} = \sum_{k=1}^{ T/\tau}\Phi^{(k)}_{h,\tau}$.

	A careful inspection of the proof of \cite[Lemma~4.5]{RBMFMK}, see \cite[(4.13)]{RBMFMK}, reveals that the energy balances are the same up to the  terms $\Phi_{h,\tau}$ and  
	\begin{align}\label{7Y}
		\int_I \int_\Omega V(\underline{C}_\tau, \tst{k-1}) [( \nabla \partial_t \hat{y}_\tau)^T  \ \nabla \overline{y}_\tau +   ( \nabla \overline{y}_\tau)^T  \nabla \partial_t \hat{y}_\tau , ( \nabla \partial_t \hat{y}_\tau)^T  \big(   \nabla \overline{y}_\tau -  \nabla \underline{y}_\tau \big) ] \di x \di t .  
	\end{align}
	Thus, strong convergence follows  exactly as in \cite[Lemma~4.5]{RBMFMK} once   we show that the previous term vanishes and  that $\Phi_{h,\tau} \to \Phi_h$ as $\tau \to 0$.  
	Using \eqref{uniform_def_conv2} for $\overline{y}_\tau$ and $\underline{y}_\tau$, respectively, we have  $ \Vert  \nabla \overline{y}_\tau -  \nabla \underline{y}_\tau \Vert_{L^\infty(I \times \Omega)} \to 0  $ as $\tau \to 0 $. Moreover,   \eqref{uniform_def_conv1}--\eqref{uniform_def_conv2}    imply that  the term in \eqref{7Y}  is controlled in absolute value by $ C\Vert  \nabla \overline{y}_\tau -  \nabla \underline{y}_\tau \Vert_{L^\infty(I \times \Omega)}$.  
	Denoting by $\hat h_\tau$ the piecewise affine interpolations of $h^{(k)}_\tau$, we have $\hat h_\tau \to h$ in $L^\infty(I; W^{2,p}(\Omega))$ and $\partial_t \hat h_\tau \to \partial_t h$ in $L^s(I \times \Omega)$ for all $s \in [1, +\infty)$ which follows from dominated convergence as the difference quotient $\big(\frac{h^{(k)}_\tau - h^{(k-1)}_\tau}{\tau}\big)_\tau$ is bounded in $L^\infty(I; W^{2,p}(\Omega))$ and converges for a.e.~$I \times \Omega$.
	This convergence is even stronger than the convergence of $(\hat y_\tau)_\tau$, giving $\Phi_{h,\tau} \to \Phi_h$.
	This concludes the proof.
\end{proof}

Finally, we prove that the heat equation is satisfied by the limiting variables $(y,\theta)$.

\begin{proposition}[Convergence of the heat-transfer equation]\label{thm:convergence_heat_vanishing_tau}
	Let $(y,\theta)  $ be as in Lemma \ref{lem:uniform_def_conv}.
	Then, for any  test function $\varphi \in C^\infty(I \times \overline \Omega)$ with $\varphi(T) = 0$,  we have that $(y  , \theta  )$ satisfies \eqref{weak_limit_heat_equation}.
\end{proposition}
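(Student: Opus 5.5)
The plan is to sum the time-discrete Euler--Lagrange equations \eqref{el_thermal_step} in time and pass to the limit term by term, using the convergences from Lemma~\ref{lem:uniform_def_conv} and Lemma~\ref{lem:strong_strain_rates_conv}.

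First, I fix $\varphi \in C^\infty(I\times\overline\Omega)$ with $\varphi(T)=0$ and test \eqref{el_thermal_step} at step $k$ with $\varphi_k \defas \tau^{-1}\int_{(k-1)\tau}^{k\tau}\varphi(t)\,\mathrm{d}t$ (or with $\varphi(k\tau)$). I multiply by $\tau$ and sum over $k$. A discrete summation by parts in the term $\tau^{-1}(\wst k-\wst{k-1})$ produces $-\intQ \pw\,\partial_t\hat\varphi_\tau$ plus the initial contribution $\int_\Omega \inten(\nabla y_0,\theta_0)\varphi(0)$. The boundary term at $T$ vanishes because $\varphi(T)=0$. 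The strong convergence $\pw\to w=\inten(\nabla y,\theta)$ in $L^1$ from \eqref{pointwise_temp_conv2} then handles this term. For the conduction term, I use the uniform convergence of $\nabla\py$ from \eqref{uniform_def_conv2} and the a.e.\ convergence of $\pt$ from \eqref{pointwise_temp_conv2}. Together with \eqref{bound_hcm} and dominated convergence, these give $\hcm(\nabla\py,\pt)\nabla\varphi\to\hcm(\nabla y,\theta)\nabla\varphi$ strongly in every $L^{r'}$. Combined with the weak convergence $\nabla\nt\rightharpoonup\nabla\theta$ in $L^r$ from \eqref{pointwise_temp_conv1}, this gives convergence of the product. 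The Robin term passes to the limit by weak continuity of traces in $L^r(I;W^{1,r})$ and by $\overline\theta_{\flat,\tau}\to\bt$ in $L^2$.

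The main step is the source term $h_\tau$ from \eqref{htaucase2}. I write its contributions in interpolated form as
\[\intQ V(\underline C_\tau,\pt)\big[\overline C_\tau-\underline C_\tau,\overline C_\tau-\underline C_\tau\big]\tau^{-2}\hat\varphi \quad\text{and}\quad \intQ \partial_C\hat W^{\rm cpl}(\underline C_\tau,\pt):\tau^{-1}(\overline C_\tau-\underline C_\tau)\hat\varphi .\]
By \eqref{calculationhelpful}, $\tau^{-1}(\overline C_\tau-\underline C_\tau) = (\nabla\partial_t\ay)^T\nabla\py+(\nabla\py)^T\nabla\partial_t\ay + \tau(\nabla\partial_t\ay)^T\nabla\partial_t\ay$. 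The last term equals $(\nabla\partial_t\ay)^T(\nabla\ny-\nabla\py)$, which tends to $0$ in $L^1$ by the $L^2$ bound on $\nabla\partial_t\ay$ and the uniform convergence $\nabla\ny-\nabla\py\to0$. The leading term converges strongly in $L^2$ to $\dot C=(\nabla\partial_t y)^T\nabla y+(\nabla y)^T\nabla\partial_t y$ by Lemma~\ref{lem:strong_strain_rates_conv} and \eqref{uniform_def_conv2}.

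For the quadratic term, continuity and boundedness of $V$ together with a.e.\ convergence of $(\underline C_\tau,\pt)$ and strong $L^2$ convergence of the strain-rate factors give convergence to $\intQ V(C,\theta)\dot C:\dot C\,\varphi=\intQ\drate\varphi$, by \eqref{diss_rate}. I will use a generalized dominated convergence argument, since $V$ is bounded by \ref{D_bounds}. For the linear term, \eqref{partial_C_cplpot_bound} gives a uniform $L^\infty$ bound on $\partial_C\hat W^{\rm cpl}(\underline C_\tau,\pt)$, and this coefficient converges a.e.\ by continuity from \ref{C_regularity}. Weak convergence times strong convergence then yields $\intQ\partial_C\hat W^{\rm cpl}(C,\theta):\dot C\,\varphi$, which equals $\intQ\partial_F\cplpot(\nabla y,\theta):\nabla\partial_t y\,\varphi$ by \eqref{eq: strange deriv} and the symmetry of $\partial_C\hat W^{\rm cpl}$.

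I expect the quadratic dissipation term to be the main obstacle, since only $L^1$ control is available there. The way around it is the strong convergence from Lemma~\ref{lem:strong_strain_rates_conv}, together with the check that the remainder from \eqref{calculationhelpful} is of lower order. Continuity of $V$ at $\theta$ requires the a.e.\ convergence of $\pt$, which \eqref{pointwise_temp_conv2} supplies along a subsequence. Collecting all the limits gives \eqref{weak_limit_heat_equation}.
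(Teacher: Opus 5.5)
Your proof is correct and follows essentially the paper's route. The paper uses the same identity \eqref{calculationhelpful} to split off the remainder $(\nabla\partial_t\ay)^T(\nabla\ny-\nabla\py)$ and bounds its contribution by $\|\nabla\partial_t\ay\|_{L^2}^2\|\nabla\ny-\nabla\py\|_{L^\infty}\to0$, so the new source term agrees with that of \cite{RBMFMK} up to vanishing terms; it then defers the remaining limit passages, which you carry out explicitly, to \cite[Proposition~4.6]{RBMFMK}.

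Two small points: in the quadratic dissipation term you need the remainder to vanish in $L^2$, not merely in $L^1$, but this follows from the ingredients you already cite, since $\|(\nabla\partial_t\ay)^T(\nabla\ny-\nabla\py)\|_{L^2}\le\|\nabla\partial_t\ay\|_{L^2}\|\nabla\ny-\nabla\py\|_{L^\infty}$; and on $\{\theta=0\}$ the a.e.\ convergence of $\partial_C\hat W^{\rm cpl}(\underline C_\tau,\pt)$ (to $0$) comes from \eqref{partial_C_cplpot_bound} rather than from \ref{C_regularity}.
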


\begin{proof}
	Our goal is to resort to the proof of \cite[Proposition~4.6]{RBMFMK}. Indeed, the  discrete weak formulations  \eqref{el_thermal_step} and \cite[(3.11)]{RBMFMK} differ only in the different definition of $h_\tau$, cf.\   \eqref{htaucase2} and \eqref{htaucase1}. Therefore, it suffices to shows that, along the evolution,  \eqref{htaucase2} and \eqref{htaucase1}  differ only by a term vanishing for $\tau \to 0$.  We address the dissipation and the coupling term separately.

	First, we  analyze the difference between $D^2$ in case \ref{Case2-newDissipation} and $\xi = 2R$ from \eqref{diss_rate}. (The latter coincides with $D^2$ in   option \ref{Case1-oldDissipation}.)
	Setting $C_i = F_i^TF_i$ for $F_i \in \R^{d\times d}$, $i =1,2$,
	\eqref{calculationhelpful}, \ref{D_quadratic}, \ref{D_bounds}, and triangular inequalities imply that
	\begin{align*}
		 & \quad \ \big|  V(C_1, \theta) [C_2 - C_1 , C_2 - C_1]  -  2 R(F_1, F_2 - F_1, \theta)\big|                                                                                    \\
		 & = \Big\vert  2 V(C_1, \theta) \big[C_2 - C_1 , ( F_2 - F_1)^T (F_2 - F_1) \big]  - V(C_1, \theta) \big[( F_2 - F_1)^T (F_2 - F_1), ( F_2 - F_1)^T (F_2 - F_1) \big] \Big\vert \\
		 & \le  2C_0   \vert C_2 - C_1 \vert \vert F_2 - F_1 \vert^2  + C_0 \vert F_2 - F_1 \vert^4                                                                            \\
		 & \le 2C_0  \vert (F_2-F_1)^T F_1 + F_1^T (F_2 - F_1)  \vert  \vert F_2 - F_1 \vert^2  + 3 C_0 \vert F_2 - F_1 \vert^4                                                \\
		 & \leq C_0  ( 7\vert F_1 \vert +  3  \vert F_2 \vert)  \vert F_2 - F_1 \vert^3   .
	\end{align*}
	Let $\vphi$ be as in the statement. The previous calculation, \eqref{uniform_def_conv1}, and \eqref{uniform_def_conv2} thus imply that
	\begin{align}\label{helpfullater}
		 & \quad \ \left\vert \int_I   \int_\Omega \Big( \tau^{-2} D^2(\nabla \underline{y}_\tau, \nabla \overline{y}_\tau , \underline{\theta}_\tau) - \tau^{-2} \drate(\nabla \underline{y}_\tau, \nabla \overline{y}_\tau -\nabla \underline{y}_\tau , \underline{\theta}_\tau) \Big)  \varphi \di x \di t  \right\vert \notag \\
		 & \quad \quad \quad  \leq C ( \Vert \py \Vert_{L^\infty(I; L^\infty(\Omega))} + \Vert \ny \Vert_{L^\infty(I; L^\infty(\Omega))} )  \Vert \partial_t \nabla \ay   \Vert^2_{L^2(I\times \Omega)} \Vert \ny - \py \Vert_{L^\infty(I; L^\infty(\Omega))} \to 0
	\end{align}
	as $\tau \to 0$. Concerning the coupling term, using  \eqref{calculationhelpful}  once again, we employ  \eqref{eq: strange deriv} and derive
	\begin{align*}
		 & \quad \tau^{-1} \pl_F \cplpot(\nabla \underline{y}_\tau
		, \underline{\theta}_\tau) : \big( \nabla \overline{y}_\tau -\nabla \underline{y}_\tau \big)   - \tau^{-1} \partial_C \hat{W}^{\rm cpl} \big( (\nabla \underline{y}_\tau)^T \nabla \underline{y}_\tau, \underline{\theta}_\tau \big) : \big( (\nabla \overline{y}_\tau )^T \nabla \overline{y}_\tau - (\nabla \underline{y}_\tau)^T \nabla \underline{y}_\tau \big) \\
		 & =   - \tau^{-1} \frac{1}{2} ( \nabla \underline{y}_\tau )^{-1}  \partial_F {W}^{\rm cpl} \big(  \nabla \underline{y}_\tau, \underline{\theta}_\tau \big) :   \big(\nabla \overline{y}_\tau -\nabla \underline{y}_\tau\big)^{T}\big(\nabla \overline{y}_\tau-\nabla \underline{y}_\tau\big)
	\end{align*}
	a.e.\ in $I \times \Omega$.  In view of \eqref{pointwise_temp_conv2}, we have $\pt \to \theta $ pointwise a.e.~in $I \times \Omega$, up to a subsequence.
	Invoking additionally \eqref{C_locally_lipschitz} and \eqref{pos_det}, the previous calculation allows us to proceed as in \eqref{helpfullater} to show that the term vanishes as $\tau \to 0$. Thus, the discrete weak formulations \eqref{el_thermal_step} and \cite[(3.11)]{RBMFMK}  differ only by a lower order term. The other terms are handled as in \cite{RBMFMK}. (At this point, we also explicitly use the convergences of $w_\tau$ in \eqref{pointwise_temp_conv}.)
\end{proof}

  Collecting the results of the  previous two sections, we are able to give the proof of Theorem~\ref{thm:van_tau}.
\begin{proof}[Proof of Theorem~\ref{thm:van_tau}]
	The existence of time-discrete solutions immediately follows from Proposition~\ref{prop:existence_mechanical_step} and Proposition~\ref{prop:existence_thermal_step}. This shows (i).
	The a priori estimates are addressed in Theorem~\ref{thm:apriori_toten_velo_bound} and Theorem \ref{thm:thermal}, allowing us to
	extract converging subsequences in the sense of \eqref{van_tau_y_conv} and \eqref{van_tau_theta_conv}, see Lemma~\ref{lem:uniform_def_conv} and Lemma \ref{lem:strong_strain_rates_conv}.    Finally, Proposition~\ref{thm:vanishing_tau_mech_nonlinear} and Proposition~\ref{thm:convergence_heat_vanishing_tau} guarantee that the limiting variables are a weak solution in the sense of Definition~\ref{def:weak_formulation}. This shows (ii).  
\end{proof}

\section{Numerical experiments}\label{sec:Example}
In order to compare the above results with experimental settings, we have also carried out numerical simulations by implementing the staggered time-discretization scheme in \eqref{mechanical_step} and \eqref{thermal_step}.
Those are performed for $d = 3$ considering plane-strain assumptions, i.e., we assume that the specimen is infinitely thick in the direction perpendicular to the plane. The higher-order regularization   $H$ in \eqref{mechanical_energy} has been neglected, i.e., $H = 0$, as this significantly simplifies the code.   This allows us to resort to   Q1 finite elements, approximating both the deformation and the temperature by piecewise-affine continuous functions. The model has been implemented in the C$++$ finite element code OOFEM, see, e.g., \cite{horak-etal, patzak} for details. Throughout this section, the variable $t$ represents integer values, smoothly interpolated for visualizations.

\subsection{Experiment 1 (Rigid body motions generate heat in \ref{Case1-oldDissipation})}\label{sec:Experiment1}
In this experiment, we compare  \ref{Case2-newDissipation} and \ref{Case1-oldDissipation} regarding the amount of energy that is dissipated between two consecutive mechanical minimization problems,  see \eqref{mechanical_step}.  We show that the (time-discrete) dissipation which is not dynamically frame indifferent leads to unphysical results, namely to heat production during a rigid motion.  

This effect is already visible in a simplified setting:
we suppose that the elastic energy is described in terms of the compressible neo-Hookean potential, i.e., 
\begin{equation}\label{neohookean}
	W^{\rm el} (F) = W_{\rm NH}(F) \defas 
	\begin{cases}
		\mu\Big(   \det(F)^{-2/3}| F|^2   - 3 \Big)^2 + \lambda\Big(   \det(F) + \frac{1}{\det(F)} - 2\Big)^2 & \text{if } \det(F) >0, \\
		+ \infty                                                                                                  & \text{otherwise},
	\end{cases}
\end{equation}
and  consider the coupling potential $W^{\rm cpl}  (F,\theta) = C_1\theta(1-\log(\theta))$ for a constant $C_1>0$,   which is independent of $F$. Here, $\mu>0$ is a shear modulus  and $\lambda>0$ is a bulk modulus of the material.   
We highlight that, although  the growth of $W_{\rm NH}$ does not satisfy the conditions in \ref{W_lower_bound}, it satisfies all qualitative features of the model,   i.e., $W_{\rm NH}$ is frame indifferent and penalizes strong compressions of the material.  
By  construction $\neohooke$ is minimized precisely on $SO(3)$ with the minimal value being zero.
As for the dissipation, we consider a viscosity tensor that is independent of $(\nabla y)^T \nabla y$ and $\theta$ of the form
\begin{align}\label{viscositytensor}
	V_{ijkl} = \frac{\nu }{2} ( \delta_{ik} \delta_{jl} + \delta_{il} \delta_{jk}),
\end{align}
where $\nu>0$ is the viscosity constant. Consequently, we have  
$R(F,\dot F, \theta) = \frac{  \nu}{2} \vert \dot{F}^T F + F^T \dot F \vert^2$.

We consider time-dependent Dirichlet-boundary conditions that prescribe a time-dependent rotation of the boundary of our reference domain $\Omega$.
More precisely, we  take a radially symmetric reference configurations $\Omega$, see Figure~\ref{fig:example_frameNonIvariance}, and define $h$ in \eqref{Yid} as
\begin{align}\label{boundary-rotation}
	h(t,x) = \begin{pmatrix}
		         \cos(t) & - \sin(t) & 0 \\
		         \sin(t) & \cos(t)   & 0 \\
		         0       & 0         & 1 \\
	         \end{pmatrix}
	\begin{pmatrix}
		x_1 \\
		x_2 \\
		x_3 \\
	\end{pmatrix}
	= \begin{pmatrix}
		  \cos(t) x_1 - \sin(t) x_2 \\
		  \sin(t) x_1 + \cos(t) x_2 \\
		  x_3                       \\
	  \end{pmatrix},
\end{align} representing a rotation of the material with rotation axis  $e_3$. Further, we assume that the external temperature $\theta_\flat$ is constant and neglect body forces  by setting $f = g = 0$ in \eqref{main_mechanical_eq} and \eqref{main_bc}.    The initial configuration  is assumed to be stress-free, i.e., $y_0(x) = h(0,x)$ with  $  W^{\rm el}  (\nabla y_0) = W_{\rm NH}({\rm Id}) = 0  $.
Given the initial temperature $\theta_0 = \theta_\flat$,   the deformation $y(t, x)=h(t,x)$ for $x\in\Omega$ and the constant temperature $\theta(x,t)=\theta_\flat(x)$   is  a solution to the system \eqref{main_mechanical_eq}--\eqref{main_thermal_eq}, i.e., the temperature of the material does not change during   time    and the body performs a rigid rotation in time. Here, we use that the system   does not dissipate energy (all terms on the right-hand sided of \eqref{main_mechanical_eq}--\eqref{main_thermal_eq} involving  $\nabla \partial_t y$ vanish)  since the right Cauchy-Green strain is always $\nabla y^T \nabla y =\Id$, and thus  independent of time. Moreover,    $\pl_F \felpot(\nabla y, \theta) = \partial_F W_{\rm NH}(\nabla y) =0$  as well.

Concerning the discrete schemes, we  first present the corresponding theoretical result.

\begin{figure}[htbp!]
	\centering
    \includegraphics[width=0.5\textwidth]{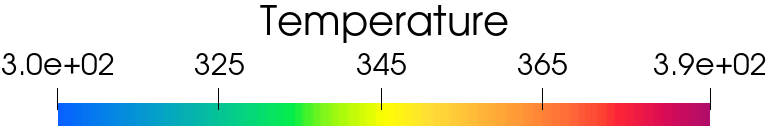} 
    \\
    \vspace{6mm}
	\begin{tabular}{ccc}
		\includegraphics[width=0.3\textwidth]{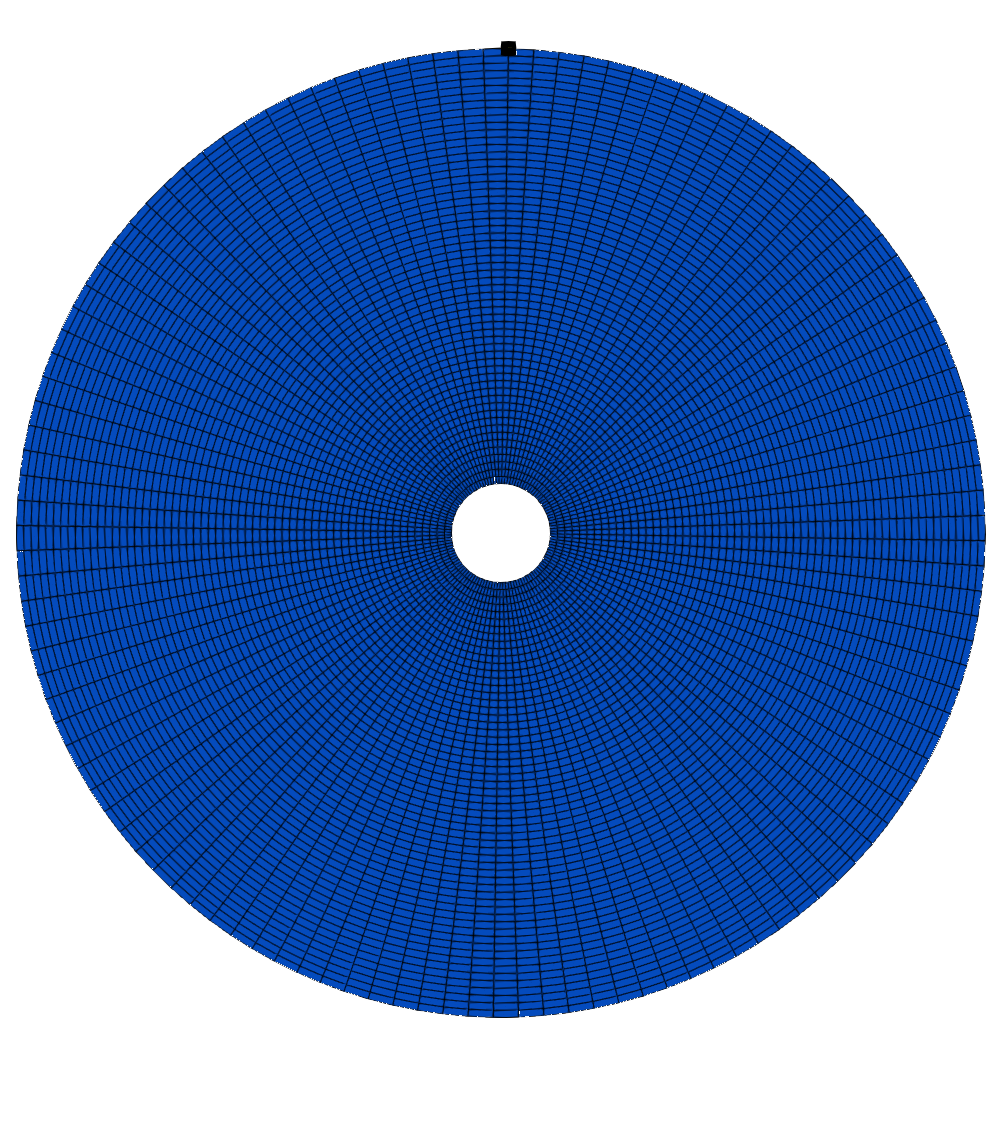} &
		\includegraphics[width=0.3\textwidth]{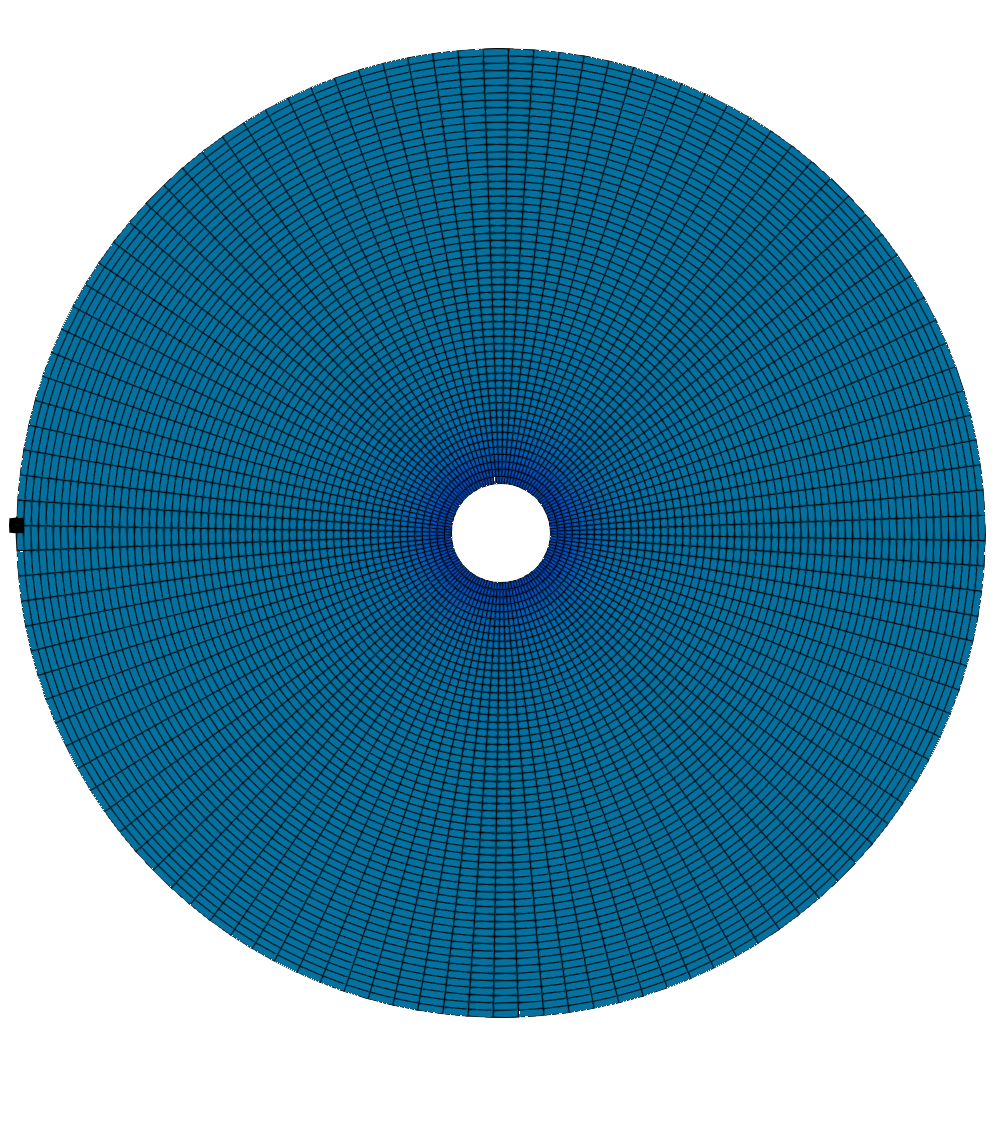} &
		\includegraphics[width=0.3\textwidth]{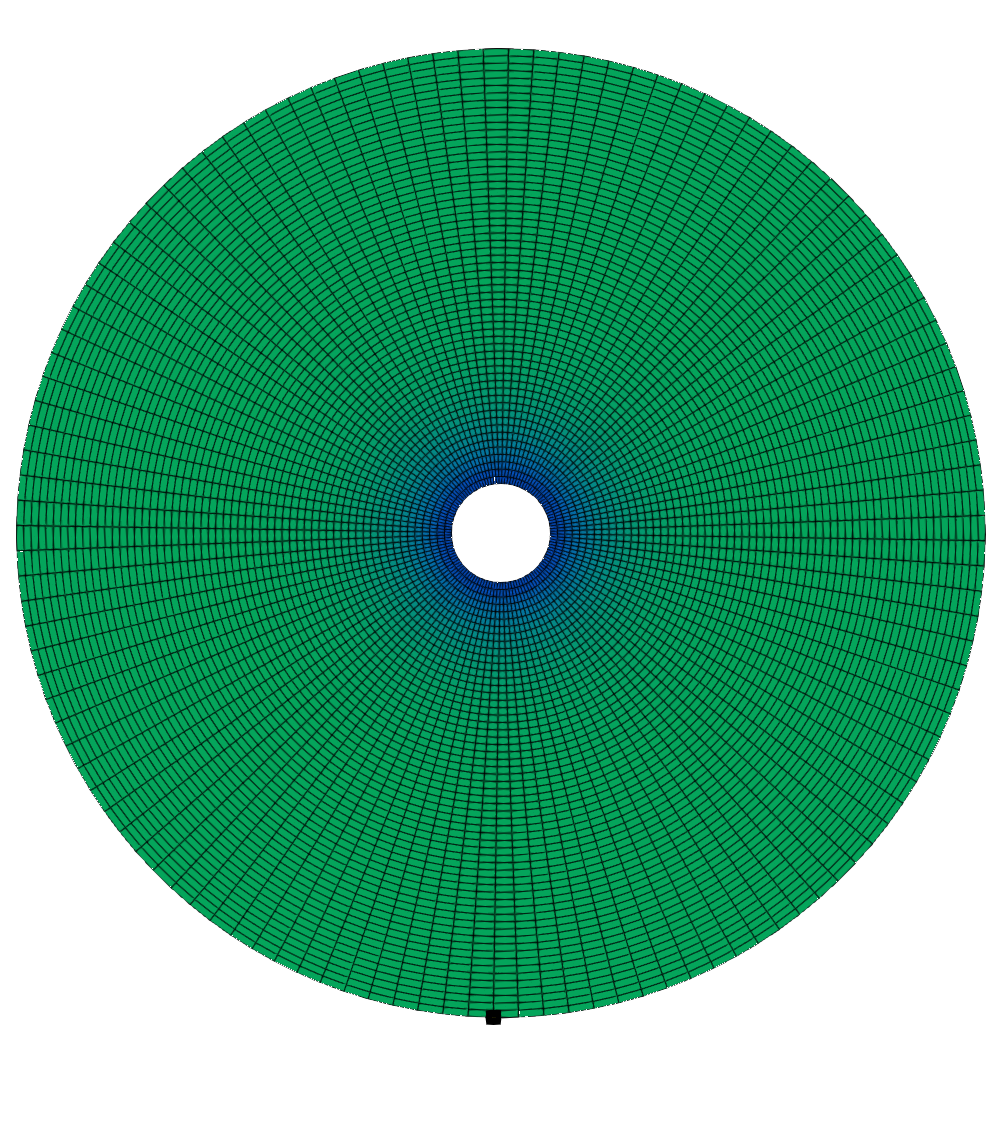}             \\
		(a)                                                           & (b) & (c) \\[6pt]

		\includegraphics[width=0.3\textwidth]{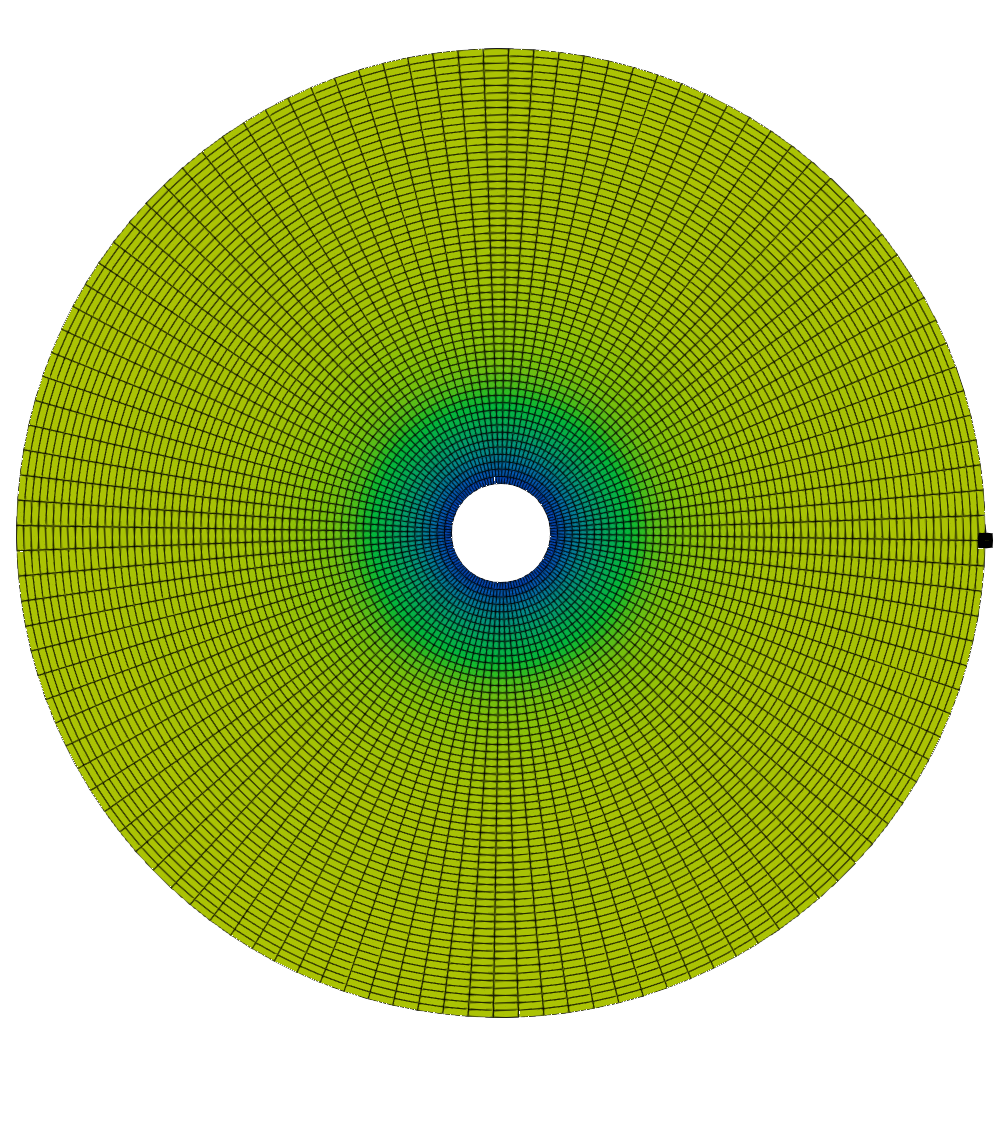} &
		\includegraphics[width=0.3\textwidth]{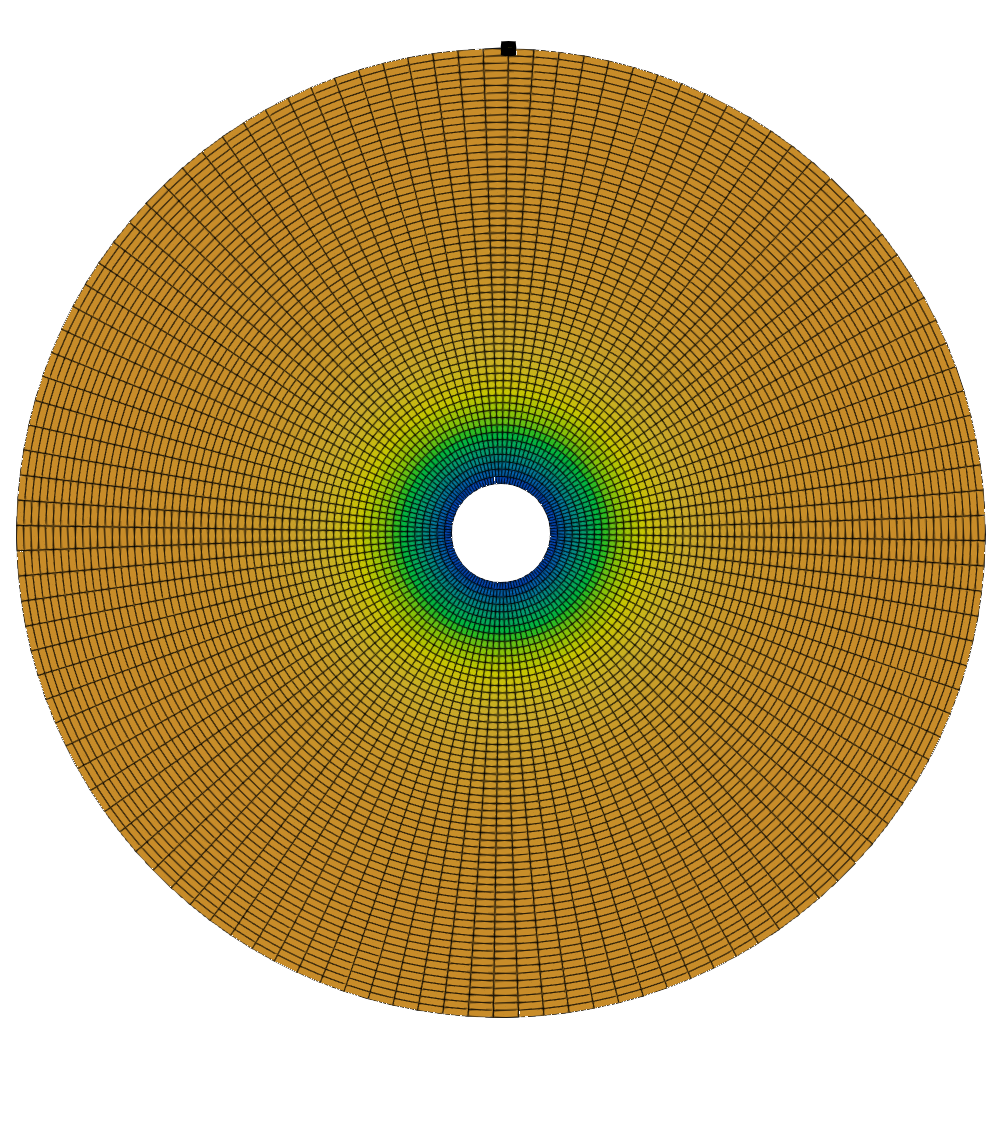} &
		\includegraphics[width=0.3\textwidth]{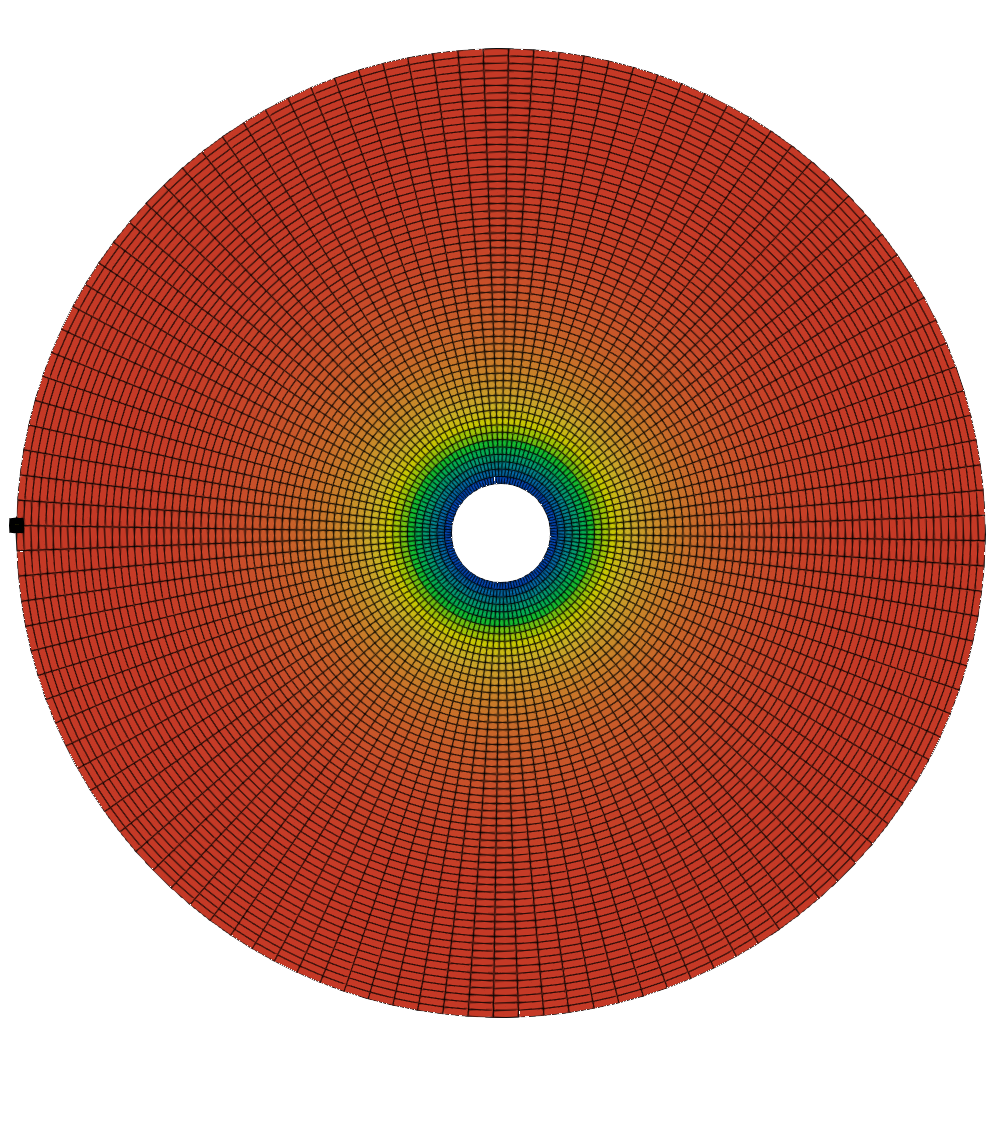}             \\
		(d)                                                           & (e) & (f)
	\end{tabular}
	\caption{The non-frame-indifferent dissipation \ref{Case1-oldDissipation}  in Experiment 1. We observe that the temperature increases over the time instants (a)--(f), where the rotation is highlighted via the black square on the outer boundary of the annulus. This is clearly unphysical since the body merely rotates by a time-dependent rigid body motion.  The simulation is conducted for zero Neumann boundary conditions at the outer boundary of the annulus, i.e., $\kappa = 0$ in   \eqref{main_bc_heat},   and for constant Dirichlet boundary conditions at the inner boundary, formally corresponding to $\kappa \to \infty$ in \eqref{main_bc_heat}. (Strictly speaking, such mixed-type boundary conditions are not covered by our theoretical result, but could be considered in principle.)    }
	\label{fig:example_frameNonIvariance}
\end{figure}

\begin{theorem}\label{thm:quantitativeerror}
	Assume that $W^{\rm el}$ is quasiconvex with $W^{\rm el}(F) = 0$ if $F \in SO(3)$ and $W^{\rm el}(F) >0$ otherwise. \
	Let $h$ be given by \eqref{boundary-rotation}, and suppose that at a previous time step $(k-1)\tau$ the deformation is given by $\yst{k-1} = h((k-1)\tau,\cdot)$.  
	\begin{enumerate}

		\item[(i)] Consider \ref{Case2-newDissipation}.
		      Then, the minimizer in \eqref{mechanical_step2} is given by
		      \begin{equation*}
			      \argmin_{y \in \Wid{k\tau}} \Big\{
			      \mechen(y)
			      + \frac{1}{\tau} \diss(\yst{k-1}, y , \theta_\flat) \Big\}=   h(k\tau,\cdot)
		      \end{equation*}
		      with   $ \diss(\yst{k-1}, h(k\tau,\cdot) , \theta_\flat)   =0$.
		\item[(ii)]  If \ref{Case1-oldDissipation} holds,  then   for $\tau>0$ sufficiently small, the minimizer in \eqref{mechanical_step2} is given by
		      \begin{equation*}
			      \argmin_{y \in \Wid{k\tau}} \Big\{
			      \mechen(y)
			      + \frac{1}{\tau} \diss(\yst{k-1}, y , \theta_\flat) \Big\}=  h(k\tau,\cdot).  
		      \end{equation*}
		      Moreover, it holds that $ \diss(\yst{k-1}, h(k\tau,\cdot) , \theta_\flat)   > \tau^2  c_h$   for a constant $c_h>0$.
	\end{enumerate}
\end{theorem}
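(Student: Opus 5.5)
The plan is a direct computation for both items. The competitor $h(k\tau,\cdot)$ is the rigid rotation $x\mapsto R_k x$ with $R_j \defas R(j\tau)\in SO(3)$, where $R(t)$ is the matrix in \eqref{boundary-rotation}. I first evaluate the functional at this competitor and then prove that nothing does better. Throughout, $\nabla \yst{k-1}=R_{k-1}$ and $\nabla^2\yst{k-1}=0$. The growth condition \ref{H_bounds} forces $H(0)=0=\min H$, and by assumption $W^{\rm el}\ge 0$ vanishes exactly on $SO(3)$.

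For item (i), under \ref{Case2-newDissipation} we have $D^2(R_{k-1},R_k,\theta_\flat)=V[R_k^TR_k-R_{k-1}^TR_{k-1},\,\cdot\,]=V[0,0]=0$. Hence $\diss(\yst{k-1},h(k\tau,\cdot),\theta_\flat)=0$, and also $\mechen(h(k\tau,\cdot))=0$. Since every term of the functional is nonnegative, $h(k\tau,\cdot)$ is a minimizer. For uniqueness, let $y$ be any minimizer. Its value is zero, so $\nabla^2y=0$ and $\nabla y\in SO(3)$ a.e., and $y$ is therefore a rigid motion $x\mapsto Qx+b$. The Dirichlet condition $y=R_k x$ on $\Gamma_D$ holds on a set whose image spans an affine space of dimension $\ge 2$, as in Step~1 of the proof of Theorem~\ref{coercivitystrainrates}. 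This forces $Q=R_k$ and $b=0$, by the argument of Lemma~\ref{gendalmaso} with $K$ the cone generated by $SO(3)-\Id$.

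For item (ii), under \ref{Case1-oldDissipation} with the tensor \eqref{viscositytensor}, the density is $D^2(R_{k-1},F)=\nu\,|(F-R_{k-1})^TR_{k-1}+R_{k-1}^T(F-R_{k-1})|^2=4\nu\,|\sym(R_{k-1}^TF)-\Id|^2$. This is a convex quadratic function of $F$. So $G_\tau(F)\defas W^{\rm el}(F)+\frac{1}{2\tau}D^2(R_{k-1},F)$ is quasiconvex, being the sum of a quasiconvex and a convex function. By Jensen's inequality for quasiconvex integrands applied to $y-h(k\tau,\cdot)$ (vanishing boundary values), together with $H(\nabla^2y)\ge 0=H(0)$, every admissible $y$ satisfies $\int_\Omega G_\tau(\nabla y)\,\mathrm dx+\int_\Omega H(\nabla^2 y)\,\mathrm dx\ge |\Omega|\,G_\tau(R_k)$. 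This is exactly the value at $h(k\tau,\cdot)$. The lower bound on the dissipation is then explicit. Using $R_k^TR_k=R_{k-1}^TR_{k-1}=\Id$ and \eqref{calculationhelpful}, the density at $F=R_k$ equals $\nu|(R_k-R_{k-1})^T(R_k-R_{k-1})|^2=\nu\,|2\Id_2-R(\tau)-R(\tau)^T|^2$ restricted to the $(x_1,x_2)$-block. This gives $8\nu(1-\cos\tau)^2$ pointwise, and hence $\diss(\yst{k-1},h(k\tau,\cdot),\theta_\flat)=4\nu|\Omega|(1-\cos\tau)^2$. Using $1-\cos\tau\ge \tau^2/4$ for $\tau\le 1$, this quantity is strictly positive. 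It equals $\nu|\Omega|\tau^4(1+O(\tau^2))$. Equivalently, the per-step heat source $\tau^{-2}D^2$ in \eqref{htaucase1} is of order $\tau^2$; in the constant $c_h$ I would keep track of which normalization of the dissipation is meant. The computation itself fixes the optimal scaling, with constant $c_h$ depending only on $\nu$ and $|\Omega|$.

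I expect the main obstacle to be the Jensen step in (ii) when $\Gamma_D\neq\Gamma$. Quasiconvexity yields the inequality only for perturbations vanishing on all of $\partial\Omega$. On the Neumann part, a pointwise competitor $F$ slightly rotated back towards $R_{k-1}$ might lower $G_\tau$ below $G_\tau(R_k)$. My first attempt would be to show that for $\tau$ small the perturbation $\phi=y-h(k\tau,\cdot)$ of a minimizer is small, using the coercivity from Lemma~\ref{lem:bad_mechen_bound} and Theorem~\ref{coercivitystrainrates}. I would then exploit the second-order structure at $R_k$: $W^{\rm el}$ has a positive definite Hessian transversally to $SO(3)$, and the tangential rotational directions are pinned by the Dirichlet data via Lemma~\ref{gendalmaso}. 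With these two ingredients, $G_\tau$ behaves like a strictly convex function along admissible perturbations. This would also give uniqueness of $h(k\tau,\cdot)$ as the minimizer for $\tau$ sufficiently small.
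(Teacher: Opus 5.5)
Your main line is the paper's proof. In (i) you use nonnegativity of all terms, vanishing of both terms at $h(k\tau,\cdot)$, and rigidity plus the Dirichlet data for uniqueness. In (ii) you use convexity in $F$ of the \ref{Case1-oldDissipation} density, quasiconvexity of $W^{\rm el}$, and Jensen's inequality for affine boundary data.

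Two small corrections to (i). The theorem takes $H=0$, so a zero value says nothing about $\nabla^2 y$. Rigidity must therefore come from Liouville's theorem applied to $\nabla y\in SO(3)$ a.e., as in the paper. Also, \ref{H_regularity}--\ref{H_bounds} only give $\min H=H(0)$, not $H(0)=0$.

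Your value $\diss(\yst{k-1},h(k\tau,\cdot),\theta_\flat)=4\nu|\Omega|(1-\cos\tau)^2$ is correct and goes beyond the paper, which only asserts the lower bound. Since this equals $\nu|\Omega|\tau^4(1+O(\tau^2))$, a $\tau$-independent $c_h$ exists only in the form $\diss\ge c\,\tau^4$. Equivalently, $\tau^{-2}\diss\ge c\,\tau^2$ for the heat source in \eqref{htaucase1}. State it that way rather than leaving the normalization open.

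There are two genuine gaps in (ii).

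\emph{Uniqueness is missing.} Jensen only shows that $h(k\tau,\cdot)$ is \emph{a} minimizer, whereas the statement identifies the argmin. Your only uniqueness argument is tied to the perturbative plan, which fails (see below). The paper obtains uniqueness from strict convexity of $y\mapsto\frac1\tau\diss(\yst{k-1},y,\theta_\flat)$ on $\Wid{k\tau}$, via the generalized Korn inequality. A shorter route:
\begin{itemize}
\item apply Jensen to the convex dissipation density alone, using $\fint_\Omega\nabla y\,\mathrm{d}x=R_k$ for full Dirichlet data;
\item use $\mechen\ge 0=\mechen(h(k\tau,\cdot))$ separately.
\end{itemize}
Any minimizer then has $\mechen(y)=0$, hence $\nabla y\in SO(3)$ a.e., and you conclude as in (i). This also shows that quasiconvexity of $W^{\rm el}$ is superfluous.

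\emph{The extension to $\Gamma_D\neq\Gamma$ in your last paragraph cannot work.} Statement (ii) is then false in general, because $h(k\tau,\cdot)$ is not even a critical point. Write $R_{k-1}=\nabla\yst{k-1}$, $P=\mathrm{diag}(1,1,0)$, and $n$ for the outer normal. The first variation of $\mechen$ at $h(k\tau,\cdot)$ vanishes. However, for $\phi\in W^{2,p}$ with $\phi=0$ on $\Gamma_D$,
\[
\frac{\mathrm{d}}{\mathrm{d}s}\Big|_{s=0}\frac{1}{\tau}\,\diss\big(\yst{k-1},h(k\tau,\cdot)+s\phi,\theta_\flat\big)
=\frac{4\nu}{\tau}(\cos\tau-1)\int_{\Gamma_N}\phi\cdot R_{k-1}Pn\,\mathrm{d}\haus^{d-1}.
\]
This is nonzero, of size $\nu\tau$, whenever $Pn\not\equiv 0$ on a part of $\Gamma_N$ of positive measure, which is always the case in plane strain. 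The non-frame-indifferent dissipation reads the rigid rotation as the uniform in-plane compression $\sym(R_{k-1}^TR_k)-\Id=(\cos\tau-1)P$, and a free boundary lets the body dilate to relieve it. Balancing this linear term against the quadratic dissipation term of order $\nu/\tau$ displaces the true minimizer by roughly $\tau^2$. So local strict convexity near $h(k\tau,\cdot)$ would at best give uniqueness of a different map.

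The correct move is to assume $\Gamma_D=\Gamma$. This is what the paper's appeal to ``affine boundary conditions'' relies on, and it is what Experiment~1 prescribes.
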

In the present setting, we assume that the second-order contribution $H$ vanishes, but we note that the statement of Theorem \ref{thm:quantitativeerror}  also holds for convex $H$ with $H(0) = 0$ and $H\geq 0$.

\begin{proof}
	(i)   In this case, we directly check that $ \diss(\yst{k-1}, h(k\tau,\cdot) , \theta_\flat)   =0$, i.e., $h(k\tau,\cdot)$ is clearly a minimizer and the dissipation is zero. 
	 The uniqueness follows from the following observation: If a minimizer $\yst{k}$ satisfies  $ \diss( h((k-1)\tau,\cdot) ,\yst{k}, \theta_\flat)   =0$, we have $\nabla \yst{k} \in SO(3)$ with $\yst{k} \in \Wid{k\tau}$. Thus, Liouville's theorem gives $\yst{k} =  h(k\tau)$. 

	(ii) In   case  \ref{Case1-oldDissipation}, let us check that the functional $y \mapsto\frac{1}{\tau} \diss(\yst{k-1}, y , \theta_\flat)$ is \emph{strictly} convex on $\Wid{k\tau}$, where we restrict ourselves to constant tensors $V$ for easier presentation.
	Given $u,v \in \Wid{k\tau}$ with $u \neq v$, a computation reveals
	\begin{align*}
		 & \int_\Omega \tfrac{1}{2} \Big( \left\vert \sym \big((\nabla \yst{k-1})^T (\nabla u  - \nabla \yst{k-1}) \big) \right\vert^2 + \left\vert \sym \big((\nabla \yst{k-1})^T (\nabla v - \nabla \yst{k-1} ) \big) \right\vert^2 \Big) \di x                                   \\
		 & \quad - \int_\Omega \left\vert \sym \big((\nabla \yst{k-1})^T \big(\tfrac{1 }{2} (\nabla u + \nabla v) - \nabla \yst{k-1} \big) \big) \right\vert^2 \di x   = \frac{1}{4} \int_\Omega \left\vert \sym \big( (\nabla \yst{k-1})^T \nabla (u-v) \big) \right\vert^2 \di x.
	\end{align*}
	By the generalized version of Korn inequality, see e.g.\ \cite[Theorem 3.1  (i)]{RBMFLM}, the right-hand side is positive, showing the strict convexity.  Due to the quasiconvexity (see \cite{dacorogna}, for instance), we immediately get that
	$h(k\tau,\cdot)$ is a solution, i.e., (ii) holds. Indeed, it follows from the definition of quasiconvexity that the energy functional is minimized by  the affine map for given affine boundary conditions.  Further, we check that  $ \diss(h((k-1)\tau,\cdot) , h(k\tau,\cdot) , \theta_\flat)   >  \tau^2 c_h$ for some $c_h >0$.  
\end{proof}

Due to the fact that the dissipation \ref{Case1-oldDissipation}  is not invariant with respect to rigid motions, the theoretical result predicts that energy is dissipated. This is confirmed by the numerical solution depicted in Figure~\ref{fig:example_frameNonIvariance}, where we see  that the temperature increases during the rigid body rotation.   As this  is clearly unphysical, this experiment demonstrates the necessity of dynamically  frame-indifferent discretizations.

\subsection{Experiment 2 (Long-time behavior)}\label{sec:Experiment2}
The long-time behavior of the isothermal model has been analyzed analytically in \cite[Theorem 2.3]{Machillpvisco} for body forces constant in time under smallness assumptions by exploiting a metric gradient flow formulation. An exponential convergence rate towards the state with minimal elastic energy has been predicted.  
In this experiment, we illustrate such a convergence result in a simulation, considering the compressible neo-Hookean potential from \eqref{neohookean}. Here, we focus on traction loadings $g$ instead of body forces $f$:
a \emph{creep test} is designed to   identify   the time-dependent deformation of the material subjected to a constant applied stress.  We consider a rectangular body of length $1$ fixed at its left boundary and subjected to uniaxial traction loading  which is  applied in the
$x_1$-direction at the right boundary of the specimen. This traction is kept constant at all times. The initial deformation $y_0$ is given by the identity, and the initial temperature ~$\theta_0$ is constant, chosen as $293K$ (Kelvin).
 The experiment is conducted for the potentials from Section~\ref{sec:Experiment1}, where we note that the heat capacity becomes independent of $F$ and $\theta$ and takes the form $-\theta \pl_\theta^2 \cplpot( F , \theta) = C_1$. 
The temperature at the left boundary is prescribed and kept fixed at the same value for all times. 

The loading induces a
uniaxial stress state. For different ratios between $\nu$ and $\mu$ and the fixed Poisson ratio of $0.125$ (corresponding to $\lambda = \mu$ in \eqref{neohookean}), the resulting deformation is recorded as a
function of time, depicted in Figure~\ref{fig:creep}(a). The axial strain, quantified by the deformation
gradient component $\partial_1 y_1(t)  $, increases over time  and is constant in space
for the constant
applied load, reflecting the material’s time-dependent response.  
The measured strain--time curve thus provides direct insight into the   rheological parameters and serves as a benchmark for validating constitutive formulations describing long-term deformations under sustained loads.  
Figure~\ref{fig:creep}(b) shows the evolution of the temperature field for the case $ \nu/\mu = 0.5$. 
The heat conductivity tensor $\mathbb{K}(\theta)$ is assumed to be independent of $\theta$, namely $ \mathbb{K}(\theta) = k  \Id$, where $k$ satisfies $C_1/k = 5$.
  Qualitatively, as both equations are of parabolic type, the speed of convergence is related to these ratios. 
 We observe that the mechanical response evolves on a significantly faster time scale than thermal equilibration. Consequently, the deformation relaxes before the temperature field becomes spatially uniform.

\begin{figure}[htbp!]
	\centering
	\begin{tabular}{cc}
		\includegraphics[width=0.47\textwidth]{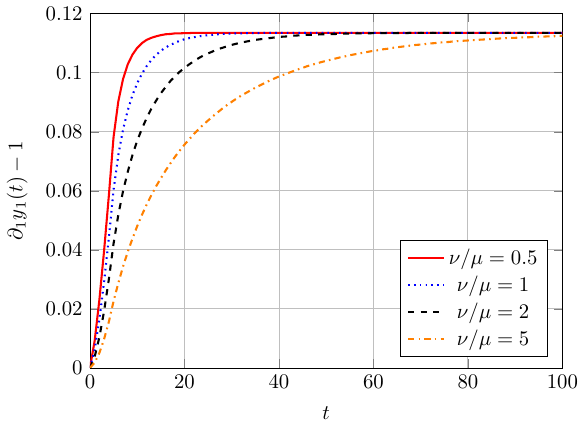}
		    &
		\includegraphics[width=0.46\textwidth]{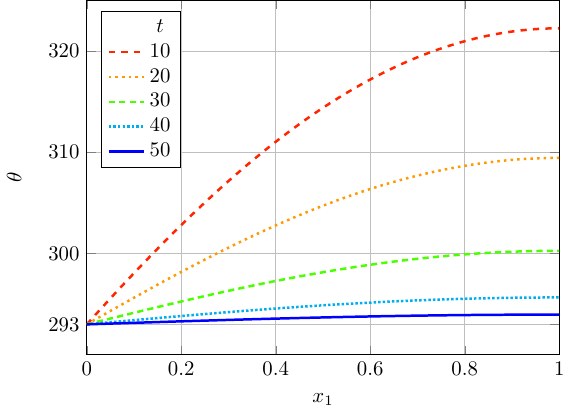} \\
		(a) & (b)                                                             \\
	\end{tabular}
	\caption{Creep test in Experiment 2 for an elastic energy density as in \eqref{neohookean}  with $\lambda = \mu$ and a viscous potential as in \eqref{viscositytensor}.   
     Subfigure (a) shows that the material response to stress is slower for  increasing ratio  $\nu/\mu$, and Subfigure (b) displays the evolution of the temperature for $\nu/\mu = 0.5$.  
	}
	\label{fig:creep}
\end{figure}

\subsection{Experiment 3 (Shape memory alloys)}\label{sec:Experiment3}
As shown in \cite{positivity24}, there exist solutions in the sense of \eqref{weak_limit_mechanical_equation}--\eqref{weak_limit_heat_equation} that preserve the positivity of the temperature throughout the evolution. This is a nontrivial issue
due to the presence of the adiabatic heat-absorbing term in the nonlinear heat equation~\eqref{main_thermal_eq}.
Thus, potential cooling effects are possible while the material undergoes  deformation.

In this experiment, we show that applied loadings may induce cooling effects
of the material.
To this end, we need to recover a situation, where the term $\theta \pl_{F \theta} \cplpot(\nabla y, \theta) : \nabla \partial_t y$ becomes negative, while the dissipation rate
$\drate(\nabla y, \nabla \partial_t y, \theta) \geq 0$ is comparably small. The latter condition is guaranteed for a small viscosity constant $\nu>0$, while the coupling potential necessarily needs to depend on both $F$ and $\theta$ as the mixed derivative $ \pl_{F \theta} \cplpot$ vanishes otherwise.

 Moreover, with the experiment we also want to show the complex temperature distribution and evolution in such systems, induced by  diffusive effects and heat sources that are
directly coupled to the strain rate. Note that the quadratic dissipation leads to a rate-dependent generation of heat. 
 In particular, variations in viscosity affect both the time scale of the mechanical response and the magnitude of the internal heat production, which may lead to pronounced temperature changes even when the characteristic diffusion time remains unchanged.

In \cite{MielkeRoubicek20Thermoviscoelasticity},   
the authors provide a family of free energy potentials modeling austenite-martensite transformations in so-called shape-memory alloys, where the free energy potential in \eqref{eq: free energy}  takes the form
\begin{equation}\label{freeenergyexample}
	W(F, \theta) = (1 - a(\theta)) W_M(F) + a(\theta) W_A(F) + C_1 \theta (1 -  \log\theta).
\end{equation}
Here, $C_1 > 0$ denotes a fixed constant as before, and $a\colon \R_+ \to [0,1]$  represents  the volume fraction between austenite and  martensite, which we assume to depend only on temperature.
Moreover, $W_M$  is a frame-indifferent
multi-well potential modeling the martensite state while $W_A$ denotes a frame-indifferent
single-well potential with minimum on $SO(3)$, corresponding to the austenite state. We consider here a prototypical example, building on the potential in \eqref{neohookean}:
given parameters $\mu$ and $\lambda$, we first  define
\begin{align*}
	Z_\eps(F) \defas \mu\Big(   \det(F)^{-2/3}| F G_\varepsilon  |^2   - 3 \Big)^2 + \lambda\Big(   \det(F) + \frac{1}{\det(F)} - 2\Big)^2  \quad \text{for} \quad G_\varepsilon \defas \left(  \begin{array}{lll}1 & \varepsilon &0 \\ 0 & 1 & 0 \\ 0& 0& 1\end{array}\right) .
\end{align*} 
Clearly, the minimum is attained for any \ $F=RG_{-\varepsilon}$ for $R\in {\rm SO}(3)$.
We set $$ W_A(F) = Z_0 (F)  $$ if $\det(F) >0$ and $W_A(F) = +\infty$ otherwise.
For $W_M$ instead, we consider a function which is minimized on two wells representing shear deformations along the $x_1$-direction, namely,
$$	W_M(F)=  \min \{ Z_{\varepsilon}(F), Z_{-\varepsilon}(F) \} $$ if $\det(F) >0$. 
Given the above potentials, for any $F \in GL^+(  3  )$ and $\theta > 0$,  we set  $a(\theta)      \defas \theta (1+\theta)^{-1}$,  
\begin{align} \label{specialcase}
	W^{\rm el} (F)         \defas W_M(F),   \qquad \text{and} \qquad
	\cplpot(F, \theta)  \defas a(\theta) (W_A(F) - W_M(F)) + C_1 \theta (1 - \log(\theta)).
\end{align}
  Once again, we consider a viscosity tensor with viscosity constant $\nu>0$ from \eqref{viscositytensor}  and the fixed choice $\lambda = \mu$.  
We perform numerical experiments with $\varepsilon=0.01$  on a rectangular domain with fixed
displacements at both the left and right ends, and a prescribed cyclic
traction $g$ applied on the top surface in the $x_2$-direction, i.e., only the second component of $g$ in \eqref{main_bc} is nonzero, displayed in Figure \ref{fig:speed}. 
 This corresponds to a classical
engineering configuration of a clamped--clamped beam subjected to a
distributed transverse load. \begin{figure}[htbp!]
	\centering
    \includegraphics[width=0.4\textwidth]{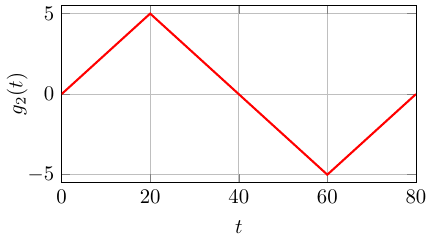}
	\caption{The evolution of the (vertical) boundary traction in Experiment $3$.
	}
	\label{fig:speed}
\end{figure} 
  During the loading process, the temperature at
the left and right end is kept fixed at $293K$. Moreover, the evolution of the deformation starts from the identity with the constant initial temperature $293K$.
As in Section~\ref{sec:Experiment2}, parameters are chosen such that time scale of the thermal diffusion is slower than the mechanical response:
More precisely,  \eqref{specialcase} leads to a nonlinear heat capacity, taking the form $-\theta \pl_\theta^2 \cplpot( F , \theta)  = C_1 - \theta a''(\theta) (W_A(F) - W_M(F))$.
The heat conductivity tensor $\mathcal{K}(\theta) = k \Id$ satisfies $- 293\pl_\theta^2 \cplpot( \Id , 293) /k = 10$, and we consider ratios $\mu/\nu$ significantly smaller than $10$.

\begin{figure}[htbp!]
	\centering
	\begin{tabular}{cc}
		\includegraphics[width=0.45\textwidth]{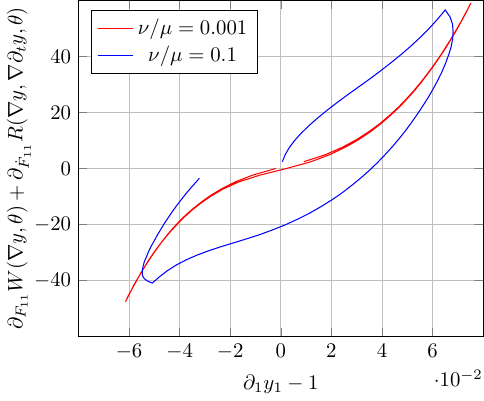}
		    &
		\includegraphics[width=0.518\textwidth]{temperature.pdf} \\
		(a) & (b)                                                             \\
	\end{tabular}
	\caption{Experiment $3$ for different ratios of $\nu/\mu$, recorded at the midpoint of the bottom surface. Subfigure (a) shows the stress-strain curve for one loading circle under the cyclic loading $g$ in Figure~\ref{fig:speed},
     i.e., $80$ time steps.  Subfigure (b) records the evolution of the temperature. 
	}
	\label{fig:speed-new}
\end{figure}

\begin{figure}[htbp!]
	\centering
	\resizebox{0.9\textwidth}{!}{
		\begin{tabular}{cc}
			\includegraphics[width=0.5\textwidth]{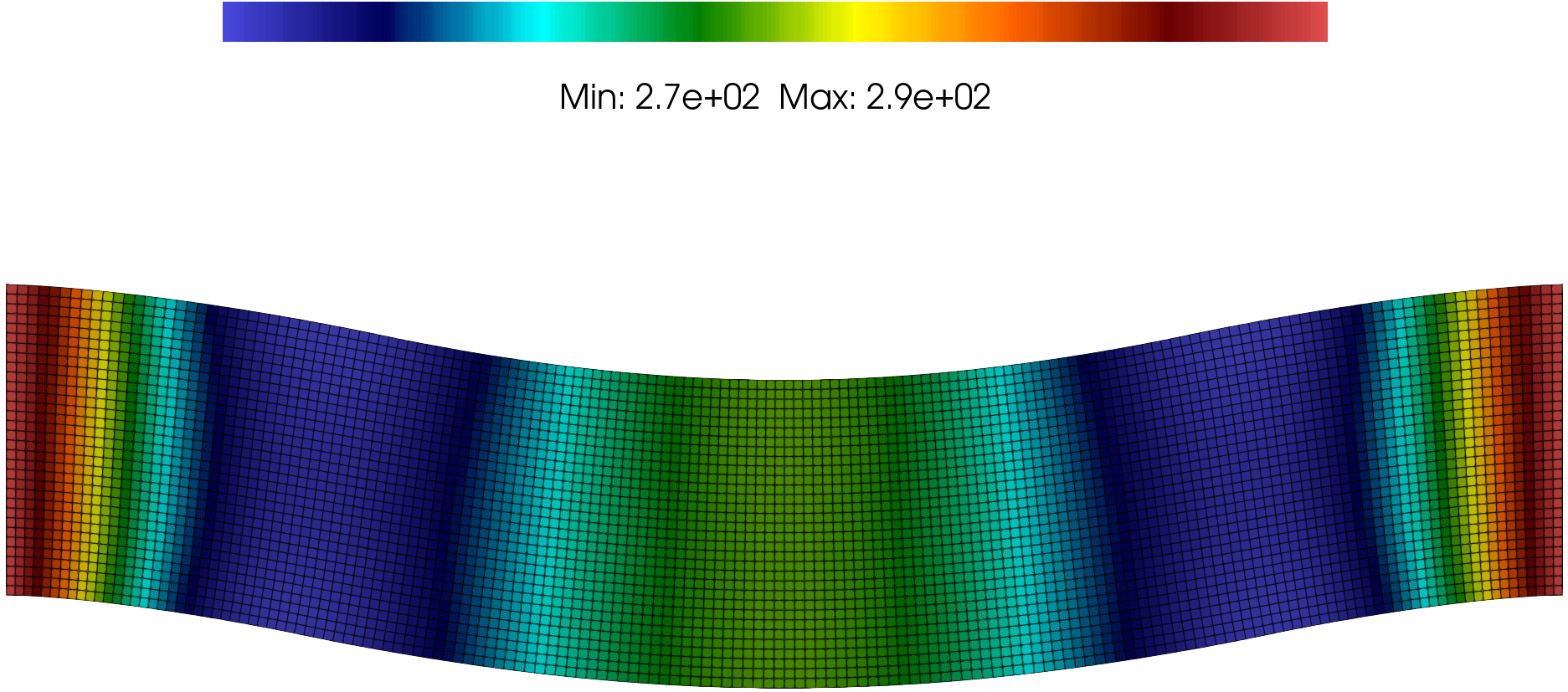}
			    &
            \includegraphics[width=0.5\textwidth]{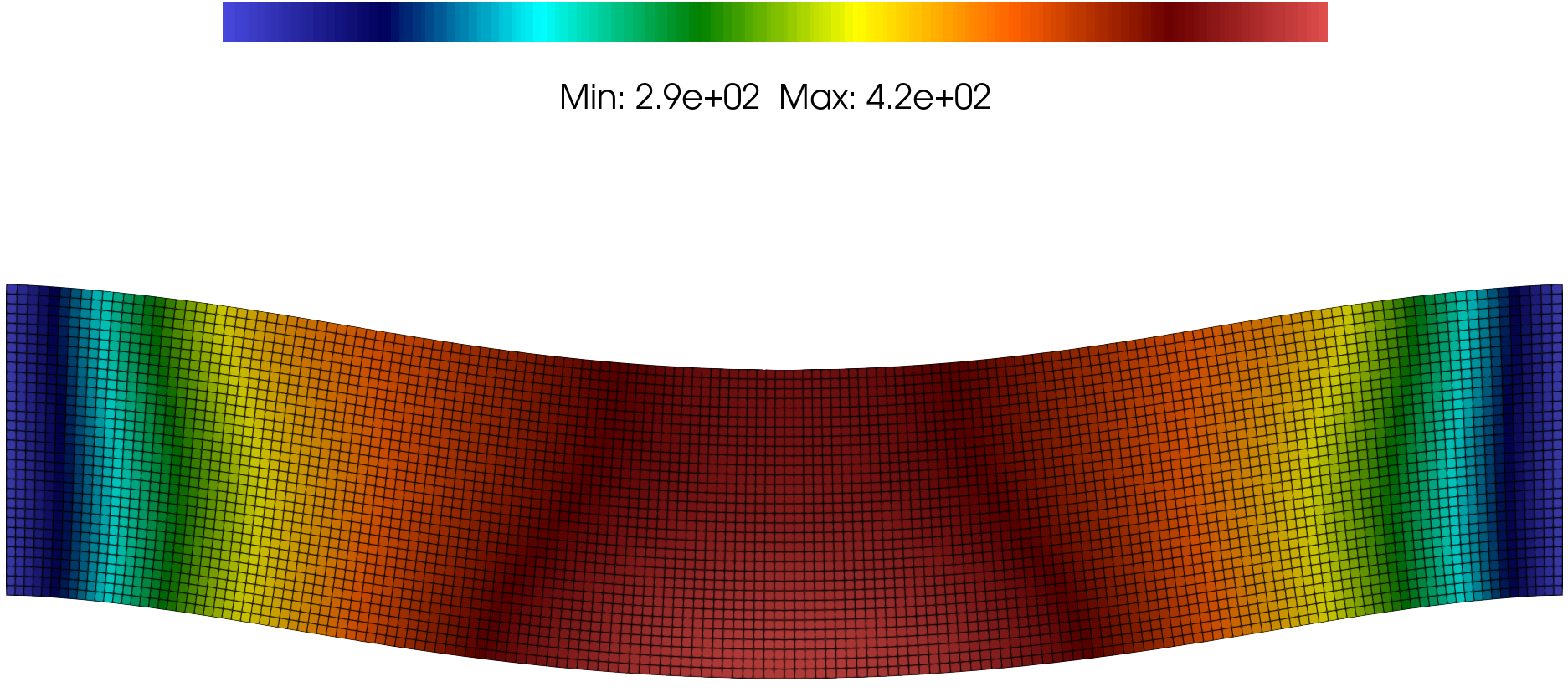}  
            \\
			(a)    & (b)   
            \\
            \\
            			\includegraphics[width=0.5\textwidth]{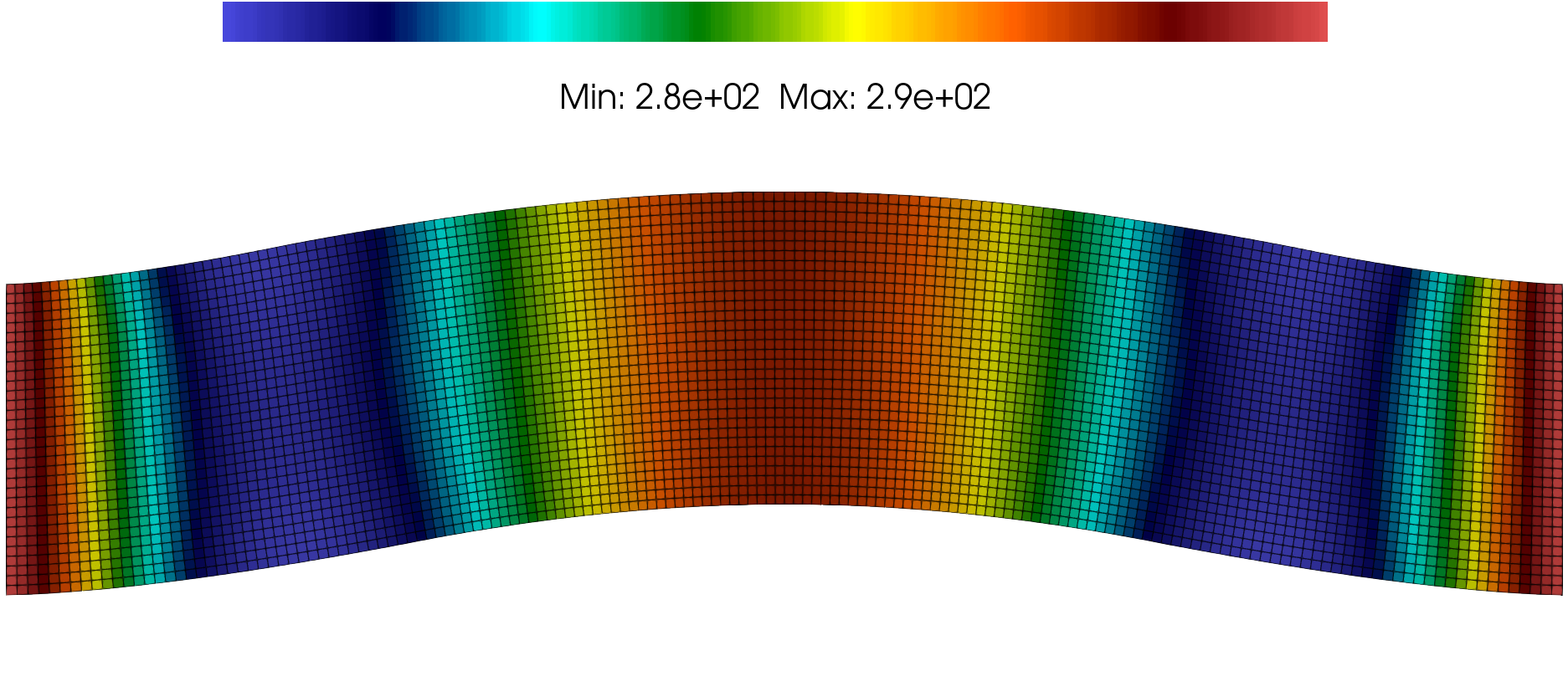}
			    &
            \includegraphics[width=0.5\textwidth]{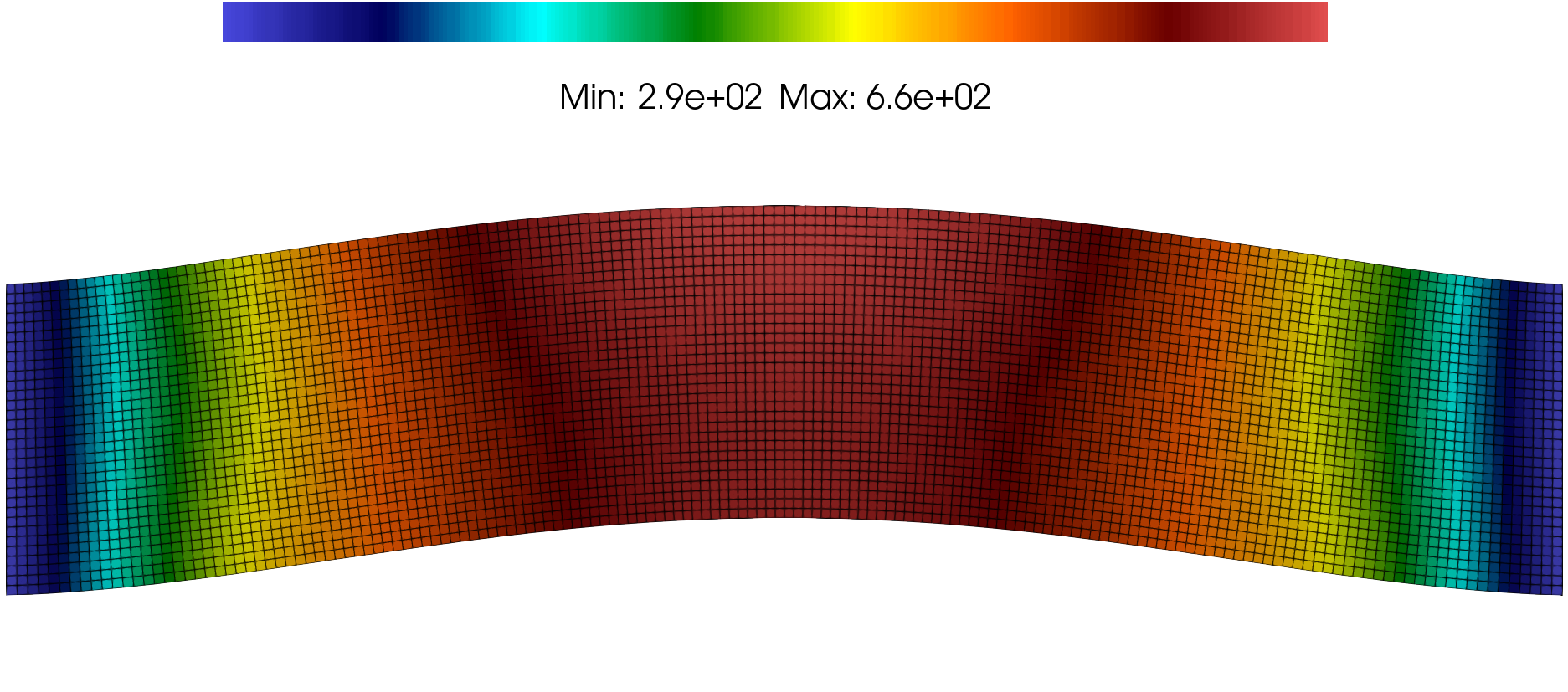}  
            \\
			(c)     & (d)                                                                      \\
            \end{tabular}
	}
	\caption{Spatial temperature distribution at two distinct time steps in Experiment 3. Subfigures (a) and (b) show results at time step 20, whereas (c) and (d) correspond to time step 60. The left column represents the case $\nu/\mu = 0.001$, and the right one displays $\nu/\mu=0.1$. }
	\label{fig:figure5}
\end{figure}

 Figure~\ref{fig:speed-new}(a) shows the corresponding stress–strain curves.
 Here, the curves first remain close to the origin as the loading $g$ is comparably small, see Figure~\ref{fig:speed}. Notice that once the (weak) derivative of $g$ changes its sign, the stress displays nonsmooth kinks after $20$ and $60$ time steps, respectively. After one loading cycle, i.e., $80$ time steps, the specimen does not reach its initial position for a large viscosity (blue curve) due to the `delayed' behavior.
The figure hence illustrates the influence of viscosity on the mechanical behavior.
  While the mechanical energy which is dissipated is almost negligible for the small viscosity parameter (red curve), it becomes substantially pronounced for the higher viscosity (blue curve), corresponding to significantly larger hysteresis. Consequently, in the latter case, a much higher heat is generated during the loading cycle which can be seen in the associated evolution of the temperature  depicted in Figure~\ref{fig:speed-new}(b). Here, the temperature evolution of  the midpoint of the bottom surface is recorded. In particular, the red curve shows a cooling effect through the adiabatic term $\theta \pl_{F \theta} \cplpot(\nabla y, \theta) : \nabla \partial_t y$ as temperatures below the initial and boundary temperature are attained. For a large ratio $\nu/\mu$, the heat production due to the viscous dissipation $\drate(\nabla y, \nabla \partial_t y, \theta)$ dominates. Here, the cooling effects after time steps $20$ and $60$ are caused by   diffusion of the temperature at the boundary and smallness of the dissipation as the loading direction changes. 
Figure~\ref{fig:figure5} shows the deformed material and its temperature distribution in the whole specimen at selected time steps $20$ and $60$. The latter correspond to the time steps where the slope of the loading changes sign, see Figure~\ref{fig:speed}. 
In Subfigures (a) and (c), the material points in the middle do not cool down as fast as the points which are closer to the left and right boundary.

 In general, these cooling effects can also be justified heuristically from the equation, by discussing the sign of the adiabatic term $\theta \pl_{F \theta} \cplpot(\nabla y, \theta) : \nabla \partial_t y = a'(\theta) (\partial_F W_A(\nabla y) - \partial_F W_M (\nabla y)): \nabla \partial_t y$.
As we apply vertical loading, the component $\partial_t \partial_2 y_2$ plays a significant role in the strain rate $\nabla \partial_t y$, and the sign of this quantity changes once the motion of the deformation changes the direction. At the same time, the deformation gradients $\nabla y$ in the motion downwards and upwards are comparable, which is why the adiabatic term changes its sign once the motion changes its direction.

\begin{figure}[htbp!]
	\centering
    \resizebox{0.9\textwidth}{!}{
	\begin{tabular}{cc}
		\includegraphics[width=0.49\textwidth]{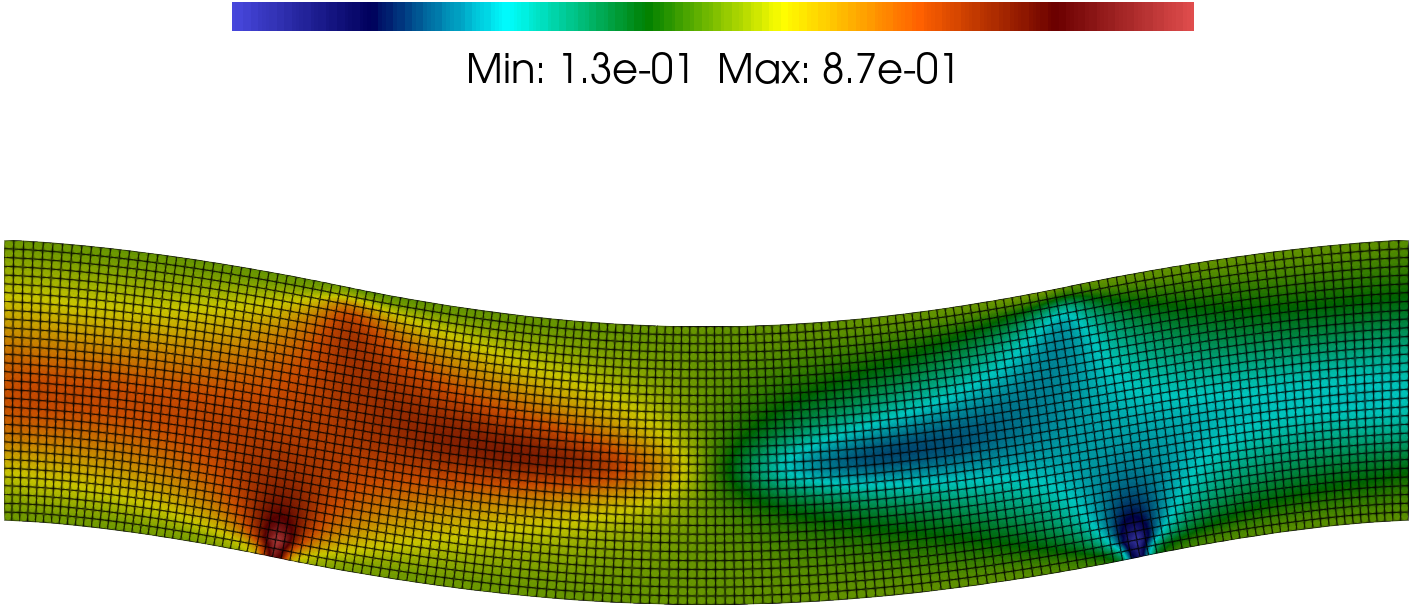}
		    &
		\includegraphics[width=0.49\textwidth]{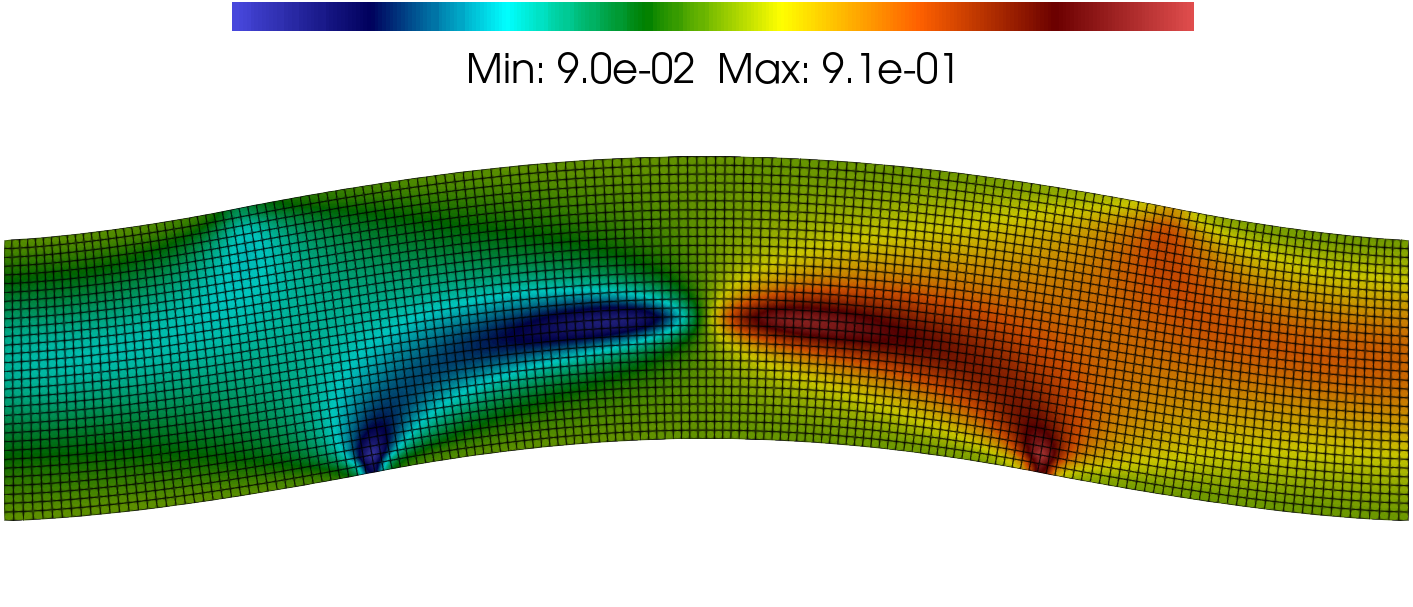}  \\
		(a) & (b)                            
	\end{tabular}
    }
	\caption{ Phase evolution in Experiment $3$ for time steps (a) 20 and (b) 60. The value on the color bar corresponds to the function \eqref{phasefunction} evaluated at material points.   }
	\label{fig:phases}
\end{figure}

 Finally, in Figure~\ref{fig:phases}, we visualize the phase-indicator function
\begin{align}\label{phasefunction}
	F\mapsto \frac{|F^T F-F_{-\varepsilon}^T F_{-\varepsilon} |^2}{|F^T F-F_{- \varepsilon}^T F_{-\varepsilon} |^2+|F^T F-F_{ \varepsilon}^T F_{\varepsilon} |^2},
\end{align}
 which measures the distance of the right Cauchy-Green strain to the martensitic wells. This shows that the adiabatic cooling effect in this simulation becomes visible once the deformation gradient is close to the martensitic phases.

\subsection*{Acknowledgements}
M.H. and M.K. gratefully acknowledge the support by CSF grants 23-04766S and 24-10366S. Moreover, they are indebted to the Departments of Mathematics of  the FAU and the JKU for hospitality. L.M.\ gratefully acknowledges support by the Deutsche Forschungsgemeinschaft (DFG, German Research Foundation) under Germany's Excellence Strategy -- EXC-2047/1 -- 390685813, and through the CRC 1720 -- 539309657. The work was further supported
by the DAAD project 57702972 and CAS Mobility Plus project DAAD-24-10.

\typeout{References}

\end{document}